\definecolor{vert}{RGB}{15,120,5}
\definecolor{gris}{RGB}{128,128,128}
\definecolor{bleu}{RGB}{0,50,150}
\definecolor{rouge}{RGB}{149,24,24}
\crefname{equation}{}{}
\newcommand{\titre}{}
\newcommand{\auteur}{}
\title{\titre}
\author{\auteur}
\numberwithin{equation}{subsubsection}
\theoremstyle{plain}
\newtheorem{thm}{Theorem}[section]
\newtheorem{prop}[thm]{Proposition}
\newtheorem{nota}[thm]{Notation}
\newtheorem{lem}[thm]{Lemma}
\newtheorem{cor}[thm]{Corollary} 
\newtheorem{conj}[thm]{Conjecture}
\theoremstyle{definition}
\newtheorem{defi}[thm]{Definition}
\newtheorem{constr}[thm]{Construction}
\theoremstyle{remark}
\newtheorem{rem}[thm]{Remark}
\numberwithin{equation}{thm}
\newcommand{\C}{\mathbb{C}}
\newcommand{\Q}{\mathbb{Q}}
\newcommand{\Z}{\mathbb{Z}}
\newcommand{\N}{\mathbb{N}}
\DeclareMathOperator{\Spec}{\mathrm{Spec}}
\newcommand{\A}{\mathbb{A}}
\newcommand{\HH}{\mathrm{H}}
\newcommand{\D}{\mathrm{D}}
\newcommand{\oscr}{\mathscr{O}}
\newcommand{\acal}{\mathcal{A}}
\newcommand{\bcal}{\mathcal{B}}
\newcommand{\fcal}{\mathcal{F}}
\newcommand{\ccal}{\mathcal{C}}
\newcommand{\dcal}{\mathcal{D}}
\newcommand{\mcal}{\mathcal{M}}
\newcommand{\rcal}{\mathcal{R}}
\newcommand{\Dd}{\mathrm{D}}
\newcommand{\Ind}{\mathrm{Ind}}
\newcommand{\hscr}{\mathscr{H}}
\newcommand{\colim}{\mathrm{colim}}
\newcommand{\Hom}{\mathrm{Hom}}
\newcommand{\Gm}{{\mathbb{G}_m}}
\newcommand{\Db}{\D^b_c}
\newcommand{\kcal}{\mathcal{K}}
\newcommand{\Mc}{{\mathcal{M}_{\mathrm{ct}}}}
\newcommand{\Mp}{{\mathcal{M}_{\mathrm{perv}}}}
\newcommand{\coker}{\mathrm{coker}\ }
\DeclareMathOperator{\sHom}{\mathscr{H}\text{\kern -3pt {\calligra\large om}}\,}
\newcommand{\HHp}{{\ ^{\mathrm{p}}\HH}}
\newcommand{\dnori}{{\dcal^b_\mcal}}
\newcommand{\Sch}{\mathrm{Sch}}
\newcommand{\op}{\mathrm{op}}
\newcommand{\nscr}{\mathscr{N}}
\newcommand{\catinfty}{\mathrm{Cat}_\infty}
\newcommand{\res}[2]{{#1}_{\mid {#2}}}
\newcommand{\subjclass}[2][2020]{%
  \let\@oldtitle\@title%
  \gdef\@title{\@oldtitle\footnotetext{#1 \emph{Mathematics subject classification.} #2}}%
}
\newcommand{\keywords}[1]{%
  \let\@@oldtitle\@title%
  \gdef\@title{\@@oldtitle\footnotetext{\emph{Key words and phrases.} #1.}}%
}
\begin{document}
\title{On the Nori and Hodge realisations\\ of Voevodsky motives}
\date{}
\author{Swann Tubach}
\subjclass[2020]{14F42, 14F25, 18G80, 14C15}
\keywords{Voevodsky motives, Nori motives, mixed Hodge modules, motivic realisation, six functor formalism}
\maketitle

\begin{abstract}
	We show that the derived categories of perverse Nori motives and mixed Hodge modules are the derived categories of their constructible
	hearts.
	This enables us to construct $\infty$-categorical lifts of the six operations. As a result, we obtain realisation functors
	from the category of Voevodsky étale motives to the derived categories of perverse Nori motives and mixed Hodge modules that commute
	with the operations.
	We also prove that if a motivic t-structure exists then Voevodsky étale motives and the derived category of perverse Nori motives are
	equivalent.
	Finally, we give a presentation of the indization
	of the derived category of perverse Nori motives as a category of modules in Voevodsky étale motives.
\end{abstract}
\tableofcontents

\section*{Introduction}

Let $k$ be a field of characteristic zero. Following the vision of Beilinson, Deligne, Grothendieck
 and others there should exist an abelian category 
of mixed motives $\mathcal{MM}(k)$, target of the universal cohomology theory
$$M^*:\mathrm{Var}_k^\op\to\mathcal{MM}(k)$$
on $k$-varieties in the sense that any other reasonable cohomology theory on $k$-varieties would factor uniquely through $M^*$. This is out of reach 
of the current technology. However two constructions have almost all the required properties to provide a category of mixed motives. The first one 
is Voevodsky, Levine, Hanamura and others' triangulated category of geometric motives $\mathrm{DM}_\mathrm{gm}(k,\Q)$ (\cite{MR1764197}), a candidate for the bounded derived 
category of mixed motives. The second is the abelian category of Nori motives $\mcal_\mathrm{Nori}(k)$ (\cite{fakhruddinNotesNoriLectures2000}), a candidate for $\mathcal{MM}(k)$. These categories have realisation functors 
factoring the known cohomology theories of varieties. Indeed, if $k$ is a subfield of $\C$ Huber (\cite{MR1775312}), Levine (\cite{MR2181828}) and 
Nori have constructed a 
Hodge realisation functor 
$$\mathrm{DM}_\mathrm{gm}( k,\Q)\to \Dd^b(\mathrm{MHS}^p_\Q)$$
to the derived category 
of polarisable rational mixed Hodge structures 
 that realises the 
Hodge cohomology of varieties and factors through the derived category of the $\Q$-linear abelian category of motives constructed by Nori 
thanks to the work of Harrer (\cite{harrerComparisonCategoriesMotives2016}) and Choudhury--Gallauer (\cite{MR3649230}). When $k$ is arbitrary they also constructed a $\ell$-adic realisation functor with values in the derived category of $\ell$-adic Galois representations.

There exist relative versions of the categories of Voevodsky and Nori. The triangulated category of
 constructible rational étale motives $\mathrm{DM}^{\mathrm{\'et}}_c(-)$ constructed by Ayoub (\cite{MR3205601}) and
Cisinski and Déglise (\cite{MR3477640}) gives a triangulated category of étale motivic sheaves generalising Voevodsky's category, and the abelian category 
of perverse Nori motives $\Mp(-)$ constructed by Ivorra and S. Morel (\cite{ivorraFourOperationsPerverse2022}) gives a 
category of motivic sheaves generalising the abelian category of Nori. 
Both settings have a 6 functors formalism, the tensor product on perverse Nori motives being constructed by Terenzi (\cite{terenziTensorStructurePerverse2024}). Note that another approach based on constructible sheaves and using Nori's ideas has been considered by Arapura in \cite{MR2995668} and \cite{MR4568787} but his construction does not include all 6 operations.
 If $X$ is a 
quasi-projective smooth $k$-variety and $k$ is a subfield of $\C$, Ivorra (\cite{MR3518311}) constructed a realisation functor 
$$\mathrm{DM}^\mathrm{\'et}_c(X)\to \Dd^b(\mathrm{MHM}(X))$$
 to Saito's derived category of mixed Hodge modules (\cite{MR1047415}). It also factors through the derived category of perverse Nori motives and computes 
the relative Hodge homology of $X$-schemes.
 Unfortunately, Ivorra's functor 
has no obvious compatibility 
with the 6 operations that exist on $\mathrm{DM}^\mathrm{\'et}_c(-)$ and $\Dd^b(\mathrm{MHM}(-))$. The reason behind this difficulty is 
that the functor is built
 by constructing an explicit complex $K^\bullet\in\mathrm{Ch}^b(\mathrm{MHM}(X))$ that computes
  the relative homology sheaf $f_!f^!\Q_X\in\Dd^b(\mathrm{MHM}(X))$ of a smooth affine $X$-scheme $f:Y\to X$, and that we do not know
   how to lift the
  6 operations explicitly on complexes: a priori they are only defined over the derived category.
  
  On the other hand, we know since 
  Robalo's thesis \cite{MR3281141} that
  the $\infty$-category $\mathcal{DM}^{\mathrm{\'et}}_c(X)$ (which is a stable $\infty$-category lifting $\mathrm{DM}^\mathrm{\'et}_c(X)$) has an universal 
  property that enables one to construct realisations functors to any stable $\infty$-category that has reasonable properties. Indeed, he shows 
  that $\mathcal{DM}^{\mathrm{\'et}}(X)=\Ind\mathcal{DM}^{\mathrm{\'et}}_c(X)$ is the target of the universal symmetric monoidal functor from $\mathrm{Sm}_X$ to a stable $\Q$-linear 
  presentably symmetric monoidal $\infty$-category, satisfying $\A^1$-invariance, 
  étale hyperdescent and $\mathbb{P}^1$-stability. The work of Drew and Gallauer (\cite{MR4560376}) shows that this 
  universal property works very well in families, so that any contravariant functor $\ccal$ from finite type $k$ schemes to symmetric monoidal $\Q$-linear presentable $\infty$-categories that
  satisfies non-effective étale descent and $\A^1$-invariance, together with $\mathbb{P}^1$-stability and smooth base change, receives a natural transformation from $\mathcal{DM}^\mathrm{\'et}(-)$ compatible with smooth base change functoriality.
  Moreover Cisinski and Déglise (see also Ayoub's \cite[Scholie 1.4.2]{MR2423375}) 
  proved in \cite[Theorem 4.4.25]{MR3971240} that the full subcategory of geometric 
  objects $\ccal_{\mathrm{gm}}$ of any such functor $\ccal$ has the six operations, and that any 
  natural transformation $\mathcal{DM}^{\mathrm{\'et}}\to\ccal$ between such functors would 
  induce a natural transformation between the categories of geometric objects that commutes with the six operations.
  In other words, to construct a family of realisation functors $\mathrm{Hdg}^*:\mathcal{DM}^{\mathrm{\'et}}_c(X)\to\dcal^b(\mathrm{MHM}(X))$ compatible with the 6 operations, 
  it suffices to lift the 6 operations on mixed Hodge modules to the $\infty$-categorical setting. This is what we do. 

  There is another approach to the Hodge realisation of étale motives considered by Brad Drew in \cite{drewMotivicHodgeModules2018}. For each finite type $k$-scheme $X$, he constructs a new category $\mathcal{DH}_c(X)$ of \emph{motivic Hodge modules} that has a tautological functor $\mathcal{DM}^{\mathrm{\'et}}_c(X)\to \mathcal{DH}_c(X)$ commuting with all the operations. If $X = \Spec k$ is the spectrum of a field, then $\mathcal{DH}_c(\Spec k)\hookrightarrow \dcal^b(\mathrm{MHS}^p_k)$ embeds in the derived category of mixed Hodge structures. Moreover one can compute Deligne cohomology as a $\Hom$-group in $\mathcal{DH}_c(X)$. However it was unclear how to relate $\mathcal{DH}_c(X)$ with $\dcal^b(\mathrm{MHM}(X))$ as it is hard to construct an adequate t-structure on $\mathcal{DH}_c(X)$. Using our result on the existence of the Hodge realisation and a description of geometric mixed Hodge modules as modules in $\mathcal{DM}^{\mathrm{\'et}}(X)$ (see \Cref{Norimodules} for the case of Nori motives), it should not be too hard to prove that Drew's category $\mathcal{DH}_c(X)$ embeds fully faithfully in the derived category of mixed Hodge modules. We plan to pursue this in a sequel to this paper.
  
Our main result is the following:
\begin{thm}[\Cref{NoriPB}, \Cref{realNori} and \Cref{weightexact}]

The 6 operations on the categories $\Dd^b(\mathrm{MHM}(-))$ and  $\Dd^b(\Mp(-))$ constructed by Saito, Ivorra, S. Morel and Terenzi admit $\infty$-categorical lifts and are defined over any finite type $k$-scheme. 
For every finite type $k$-scheme $X$
there exists an $\infty$-functor
\[\mathrm{Nor}^*: \mathcal{DM}^{\mathrm{\'et}}_c(X)\to\dcal^b(\Mp(X))\] and if $k$ is a subfield of $\C$ there exist $\infty$-functors
\[R_H:\dcal^b(\Mp(X))\to \dcal^b(\mathrm{MHM}(X))\]
and 
\[\mathrm{Hdg}^*: \mathcal{DM}^{\mathrm{\'et}}_c(X)\to\dcal^b(\mathrm{MHM}(X)).\]
The three functors commute with the $6$ operations and we have $R_H\circ\mathrm{Nor}^*\simeq \mathrm{Hdg}^*$.
Moreover, $\mathrm{Nor}^*$ is compatible with the Betti and the $\ell$-adic realisation functors, $R_H$ is t-exact and all functors are compatible with weights.

\end{thm}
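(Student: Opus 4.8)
The plan is to deduce the theorem from three inputs: the structural identification announced in the abstract, namely that Saito's derived category $\dcal^b(\mathrm{MHM}^p(X))$ and the Ivorra--Morel--Terenzi category $\dcal^b(\Mp(X))$ are the bounded derived $\infty$-categories of their abelian (``constructible'') hearts; a rigidity principle for $\infty$-categorical lifts of triangulated functors, adjunctions and structural $2$-isomorphisms between derived $\infty$-categories of abelian categories; and Robalo's universal property of $\mathcal{DM}$, extended to families by Drew--Gallauer, together with the Cisinski--Déglise extension machinery recalled above.

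\textbf{Step 1: the $\infty$-categorical six operations.} I first regard $X\mapsto\dcal^b(\mathrm{MHM}^p(X))$ and $X\mapsto\dcal^b(\Mp(X))$ as presheaves of stable $\infty$-categories on schemes of finite type over $k$. The key point is that on each $X$ the canonical realisation functor out of the bounded derived $\infty$-category of the perverse heart is an equivalence. Granting this, every one of the six operations of Saito, Ivorra, Morel and Terenzi --- a priori only a triangulated functor on homotopy categories --- lifts to an exact $\infty$-functor essentially uniquely, the relevant space of lifts being contractible: one lifts the perverse-$t$-exact building blocks (open restriction $j^*$, closed pushforward $i_*$, the shifted unipotent nearby cycle and maximal extension functors) by deriving exact functors between the abelian hearts, and reconstructs the remaining operations by Beilinson-style gluing along stratifications. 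The same rigidity upgrades the adjunctions, base change and projection isomorphisms and the symmetric monoidal structure to the $\infty$-level, and assembling over morphisms of schemes yields the two presheaves of $\infty$-categorical six-functor formalisms. Extending from quasi-projective smooth varieties --- the range of Ivorra's original construction on the Hodge side --- to arbitrary finite type $k$-schemes is then carried out by Nagata compactification and descent, which is routine once everything is $\infty$-functorial. \emph{This is the main obstacle}: it rests on the derived-category-of-the-heart theorem --- a Beilinson-type statement, now $\infty$-categorical --- and on the coherence bookkeeping needed to lift all the structural natural isomorphisms.

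\textbf{Step 2: the realisation functors.} Passing to $\Ind$-objects and extending the operations by continuity and adjunction, $\Ind\,\dcal^b(\Mp(-))$ and $\Ind\,\dcal^b(\mathrm{MHM}^p(-))$ become presheaves of $\Q$-linear presentably symmetric monoidal stable $\infty$-categories that are $\A^1$-invariant, $\mathbb{P}^1$-stable, and satisfy étale hyperdescent --- the last because they satisfy Nisnevich and proper descent. They therefore satisfy the hypotheses of the universal property of $\mathcal{DM}$, so there are essentially unique natural transformations of six-functor formalisms $\mathcal{DM}\to\Ind\,\dcal^b(\Mp(-))$ and $\mathcal{DM}\to\Ind\,\dcal^b(\mathrm{MHM}^p(-))$, which by Cisinski--Déglise restrict to the constructible objects --- namely $\dcal^b(\Mp(X))$ and $\dcal^b(\mathrm{MHM}^p(X))$ --- giving $\mathrm{Nor}^*$ and $\mathrm{Hdg}^*$ commuting with the six operations. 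The functor $R_H$ is obtained similarly, from a natural transformation $\dcal^b(\Mp(-))\to\dcal^b(\mathrm{MHM}^p(-))$ of six-functor formalisms over $\C$-schemes, produced by a universal property of the derived category of perverse Nori motives (it is initial among six-functor formalisms carrying a conservative Betti realisation), $\dcal^b(\mathrm{MHM}^p(-))$ being such a formalism via its underlying-perverse-sheaf functor. Essential uniqueness of the three transformations then forces $R_H\circ\mathrm{Nor}^*\simeq\mathrm{Hdg}^*$.

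\textbf{Step 3: the remaining properties.} Compatibility of $\mathrm{Nor}^*$ with Betti and $\ell$-adic cohomology is again uniqueness: composing $\mathrm{Nor}^*$ with the Betti, resp. $\ell$-adic, realisation of $\dcal^b(\Mp(-))$ produces a natural transformation $\mathcal{DM}_c\to\Db$ with the same characterising properties as the corresponding realisation of $\mathcal{DM}_c$, hence the two coincide. Since $\dcal^b(\Mp(X))$ and $\dcal^b(\mathrm{MHM}^p(X))$ are derived categories of their hearts, $R_H$ is $t$-exact as soon as it is induced by an exact functor $\Mp(X)\to\mathrm{MHM}^p(X)$ on hearts, which is precisely the abelian Hodge realisation built into the construction. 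Finally, weight-exactness of $\mathrm{Nor}^*$, $R_H$ and $\mathrm{Hdg}^*$ is checked on the generators coming from smooth projective $X$-schemes, which realise to pure objects of the expected weight, and then propagated to all of $\mathcal{DM}_c(X)$ and $\dcal^b(\Mp(X))$ by dévissage along the weight structure.
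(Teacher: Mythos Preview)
Your Step 1 contains a genuine gap, and it is precisely the gap that the paper's main technical idea is designed to close. You propose to lift the six operations by lifting their perverse-$t$-exact building blocks ($j^*$, $i_*$, nearby cycle, maximal extension) and then reassembling via Beilinson-style gluing. Each such building block can indeed be lifted via the universal property of $\dcal^b$ of an abelian category. But the object you need is not a collection of individual $\infty$-functors: it is a functor $\mathrm{Var}_k^{\op}\to\mathrm{CAlg}(\mathrm{St})$, with all higher coherences for composition and for the monoidal structure. For a general $f$, the pullback $f^*$ is \emph{not} perverse-$t$-exact, so you cannot obtain this functor directly from a diagram of abelian categories; you would have to decompose each $f^*$ into perverse-exact pieces and then prove that the reassembled functors satisfy all coherences. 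You call this ``coherence bookkeeping'' and leave it at that, but this is exactly the problem. The paper solves it by a change of heart: it proves (via an adaptation of Nori's effaceability argument) that $\dcal^b(\Mp(X))\simeq\dcal^b(\Mc(X))$ where $\Mc(X)$ is the \emph{constructible} heart. For the constructible $t$-structure, $f^*$ and $\otimes$ are $t$-exact on the nose, so $X\mapsto\Mc(X)$ is already a functor $\mathrm{Var}_k^{\op}\to\mathrm{CAlg}(\mathrm{Ab})$, and applying the symmetric monoidal functor $\dcal^b(-)$ yields the pullback formalism with all coherences for free. The identification of the resulting $f^*$, $f_*$ with the Saito/Ivorra--Morel functors is then checked by reducing to closed immersions and smooth morphisms, where one side is perverse-$t$-exact. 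Without this constructible-heart step your argument does not go through.

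A second, smaller gap is in your construction of $R_H$. You invoke a universal property of $\dcal^b(\Mp(-))$ as ``initial among six-functor formalisms with a conservative Betti realisation''; no such statement is proved in the paper (and it would be substantially stronger than what is available). The paper instead first constructs $\mathrm{Hdg}^*:\mathcal{DM}_c\to\dcal^b(\mathrm{MHM}^p)$ via Drew--Gallauer, then uses the \emph{abelian} universal property of $\Mp(X)$ (as the universal target of the perverse $\HH^0$ of $\mathcal{DM}_c(X)$) to obtain an exact functor $\Mp(X)\to\mathrm{MHM}^p(X)$ on hearts, and derives it. Your Steps 2 and 3 are otherwise close to the paper's argument: the Drew--Gallauer universal property produces $\mathrm{Nor}^*$ and $\mathrm{Hdg}^*$ as you say (after checking $\Ind\dcal^b_\mcal$ is a pullback formalism, which uses the lifts from Step 1), and weight-exactness is checked on generating pure objects; for $R_H$ one needs the extra observation that every pure perverse Nori motive is a subquotient of something in the image of the universal functor from $\mathcal{DM}_c$.
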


How to give $\infty$-categorical lifts of triangulated functors? In general this is a hard question. One general solution would be to use 
the formalism developed by Liu and Zheng in \cite{liuGluingRestrictedNerves2015}. In our particular case we have a simpler method. Indeed, we use that it is very easy to construct $\infty$-categorical lifts of 
derived functors. Most of the $6$ operations are not t-exact (even on one side) for the perverse t-structure. However, pullback functors are t-exact for 
the constructible t-structure, that is the t-structure whose heart behaves like the abelian category of constructible sheaves. 
In \cite{MR1940678}, Nori proves that the triangulated category of cohomologically constructible sheaves $\Dd^b_c(X(\C),\Q)$ on the $\C$-points of an 
algebraic variety $X$ over a subfield of $\C$ is the derived category of its constructible heart. We show that one can adapt his argument to mixed Hodge modules and perverse Nori motives, and 
obtain the following result:
\begin{thm}[\Cref{cccp}]
  Let $X$ be a quasi-projective $k$-variety.
Denote by $\mathrm{MHM}_c(X)$ (\emph{resp}. by $\Mc(X)$) the heart of the constructible t-structure
 on $\Dd^b(\mathrm{MHM}(X))$ (\emph{resp}. on $\Dd^b(\Mp(X))$). The canonical $\infty$-functors 
\[\mathrm{real}_{\mathrm{MHM}_c(X)}:\dcal^b(\mathrm{MHM}_c(X))\to\dcal^b(\mathrm{MHM}(X))\] 
if $k$ is a subfield of $\C$ and
\[\mathrm{real}_{\Mc(X)}:\dcal^b(\Mc(X))\to\dcal^b(\Mp(X))\] for an arbitrary field $k$
are equivalences of $\infty$-categories.
\end{thm}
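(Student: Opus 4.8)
The plan is to follow Nori's strategy from \cite{MR1940678}, which establishes that a bounded derived category is recovered from the derived category of its constructible heart, once one has enough vanishing of $\Ext$-groups and a good supply of "cellular" or "stratified" generators. Concretely, for an abelian category $\acal$ sitting as the heart of a bounded $t$-structure on a stable $\infty$-category $\dscr$, the canonical realisation functor $\mathrm{real}_\acal:\dcal^b(\acal)\to\dscr$ is an equivalence as soon as (i) it is essentially surjective, and (ii) it is fully faithful, the latter reducing by a dévissage on the $t$-structure to showing that for all $A,B\in\acal$ the induced maps $\Ext^n_{\dcal^b(\acal)}(A,B)\to\Hom_{\dscr}(A,B[n])$ are isomorphisms for every $n\geqslant 0$. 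Both categories $\Dd^b(\mathrm{MHM}^p(X))$ and $\Dd^b(\Mp(X))$ carry the constructible $t$-structure (whose existence and basic properties I would cite from the six-functor formalism, gluing the constructible $t$-structures along a stratification via $j_!,j_*,i_*$), so the heart $\mathrm{MHM}_c(X)$ (resp. $\Mc(X)$) is defined, and essential surjectivity is automatic because $\dcal^b$ of the heart of a bounded $t$-structure always surjects onto the thick subcategory generated by the heart, which here is everything by boundedness.

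The key steps, in order, are as follows. \textbf{Step 1: Noetherian induction on a stratification.} Fix $X$ and argue by induction on $\dim X$, or more precisely by Noetherian induction on closed subsets: choose a dense open $j:U\hookrightarrow X$ with closed complement $i:Z\hookrightarrow X$ such that the objects of interest are "lisse" (smooth in the constructible sense) along $U$, i.e. restrict to local systems of Nori motives / admissible variations of mixed Hodge structure. The localisation triangle $j_!j^*\to\mathrm{id}\to i_*i^*$ and its dual, together with the inductive hypothesis on $Z$, reduce every $\Ext$-computation to the case of lisse objects on a smooth connected $U$. \textbf{Step 2: The lisse case via a good neighbourhood / $K(\pi,1)$ argument.} On a smooth connected variety, after further shrinking, one reduces to computing $\Ext$-groups between local systems; here I would invoke Artin's theorem on good neighbourhoods (elementary fibrations) to filter $U$ so that the relevant cohomology is that of a $K(\pi,1)$, and match $\Ext^n$ in $\dcal^b$ of the heart with $\Ext^n$ of representations of $\pi_1$, which in turn matches group cohomology and hence the morphisms in $\dscr$. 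The crucial input is that the abelian categories $\mathrm{MHM}_c(X)$ and $\Mc(X)$ genuinely behave like categories of constructible sheaves: they are stable under the relevant functors $i^*,i_!,j_*,j^*$ restricted to the constructible hearts, and these functors agree with the sheaf-theoretic ones under the Betti realisation — this is what lets the cohomological comparison go through. \textbf{Step 3: Assembling.} Once the $\Ext$-comparison is known for lisse objects on smooth $U$, the localisation triangles and a five-lemma / spectral-sequence argument propagate it to all objects, giving full faithfulness; combined with essential surjectivity this yields the equivalence $\mathrm{real}_{\mathrm{MHM}_c(X)}$ (resp. $\mathrm{real}_{\Mc(X)}$).

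Two points deserve care. First, one must verify that the six operations and, above all, the constructible $t$-structure are available on $\Dd^b(\mathrm{MHM}^p(X))$ and $\Dd^b(\Mp(X))$ in the needed generality; this is where the quasi-projectivity hypothesis enters, since the construction of $i^*$, $f_!$ etc.\ on these derived categories (by Saito, Ivorra, S.\ Morel and Terenzi) is set up for quasi-projective varieties, and the gluing of the constructible $t$-structures along a stratification uses that these functors restrict correctly to the hearts. Second, the passage from triangulated categories to $\infty$-categories: since $\dcal^b(\acal)$ for an abelian $\acal$ has a canonical $\infty$-enhancement and $\mathrm{real}_\acal$ is a canonical $\infty$-functor, the statement "is an equivalence of $\infty$-categories" follows once the underlying functor of triangulated categories is an equivalence, because a functor of stable $\infty$-categories inducing an equivalence on homotopy categories is an equivalence — so no extra $\infty$-categorical work beyond the triangulated argument is strictly needed here.

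\textbf{The main obstacle.} The hard part will be \emph{Step 2}: proving the $\Ext$-comparison for lisse objects. In the classical case of $\Dd^b_c(X(\C),\Q)$ this rests on the fact that $\Ext^n$ of local systems, computed in the constructible heart, equals singular cohomology with local coefficients, which Nori handles via explicit simplicial/cellular models. For mixed Hodge modules and perverse Nori motives one does not have such a naive description: one must show that the Yoneda $\Ext$-groups $\Ext^n_{\mathrm{MHM}_c(U)}(L,L')$ between (shifted) admissible variations / Nori local systems compute the correct "motivic" or "Hodge-theoretic" cohomology, i.e.\ agree with $\Hom$ in the ambient derived category. This requires knowing that these constructible hearts have finite cohomological dimension and that their $\Ext$-groups are computed by the expected geometric complexes — facts which I expect to extract from the structure of $\mathrm{MHM}^p$ and $\Mp$ as modules/representations over a suitable diagram (Nori's quiver, resp.\ Saito's weight-filtered $\mathscr D$-modules), combined with the good-neighbourhood reduction. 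Getting this comparison to hold \emph{uniformly} along the Noetherian induction, and checking compatibility of the constructible truncation functors with $j_*$ and $i^*$ on the hearts, is the technical crux of the whole argument.
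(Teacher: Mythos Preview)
Your $\infty$-categorical remark is correct and matches the paper. But the core argument is quite different from yours: the paper never computes or compares $\Ext$-groups. It uses the effaceability criterion \cite[Lemme 3.1.16]{MR0751966}: the realisation is an equivalence once $\Hom_{\Dd(X)}(M,-[q]):\Mc(X)\to\mathrm{Vect}_\Q$ is effaceable for every $M\in\Mc(X)$ and every $q>0$. This is proved first for $M=\Q_{\A^n_k}$ by induction on $n$ (embedding a given constructible motive into one whose pushforward along $\A^n\to\A^{n-1}$ vanishes --- a construction checked after the conservative realisation using Nori's \cite[Prop.~2.2]{MR1940678}), then for $\Q_X$ with $X$ affine via a closed immersion into $\A^n$, then for general $X$ by Mayer--Vietoris on an affine cover. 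The lisse case you flag as the crux is then essentially free: $-\otimes L$ is $t$-exact with a right adjoint, so admissibility of $\Q_X$ immediately gives admissibility of any lisse $L\simeq\Q_X\otimes L$. A Noetherian induction on the support finishes the proof.

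Your Step 2, by contrast, would not go through as written. The $K(\pi,1)$/good-neighbourhood argument identifies $\Ext$ of local systems with group cohomology of the topological $\pi_1$, but for lisse objects in $\mathrm{MHM}_c(U)$ or $\Mc(U)$ neither side of the comparison you want is group cohomology: already over a point these are $\Ext$-groups in mixed Hodge structures (resp.\ Nori motives), and over positive-dimensional $U$ the ambient $\Hom$'s are absolute-Hodge (resp.\ motivic) cohomology, not singular cohomology. So the identification with $H^*(\pi_1,-)$ computes the wrong object on both sides, and you offer no alternative mechanism for the enriched setting. There is also a secondary circularity in your Step~1 d\'evissage: reducing $\Ext^n_{\Mc(X)}(i_*i^*A,B)$ to $Z$ wants the adjunction $(i_*,i^!)$, but $i^!$ is not constructible-$t$-exact, so this reduction already presupposes that the derived $i^!$ on $\dcal^b(\Mc)$ matches the one on $\Dd^b(\Mp)$ under realisation --- which is essentially the theorem. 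The effaceability route avoids both problems.
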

This theorem enables us to make the handy change of variables $\dcal^b(\Mc(X))\simeq\dcal^b(\Mp(X))$. As pullback functors $f^*$ are easily seen to be $\infty$-functors this implies they are derived functors of
  their restrictions to the constructible hearts, so that the functoriality of the constructible heart, which can be written by hand, gives immediately the functoriality of the derived $\infty$-category! This consideration was the starting idea of this paper. 

   Let $\mathcal{DN}(X)=\Ind\dcal^b(\Mp(X))$ be the indization of the bounded derived $\infty$-category of $\Mp(X)$. This is a  compactly generated stable presentably symmetric 
  monoidal
   $\infty$-category whose category of compact objects is $\dcal^b(\Mp(X))$. The functor $\mathrm{Nor}^*:\mathcal{DM}^{\mathrm{\'et}}_c(X)\to\dcal^b(\Mp(X))$ extends to a functor 
   \[\mathrm{Nor}^*:\mathcal{DM}^{\mathrm{\'et}}(X)\to\mathcal{DN}(X)\] which preserves colimits, hence has a right adjoint $\mathrm{Nor}_*$. For each finite type $k$-scheme, set 
   $\nscr_X:=\mathrm{Nor}_*\mathrm{Nor}^*\Q_X\in\mathcal{DM}^{\mathrm{\'et}}(X)$. As $\mathrm{Nor}_*$ is automatically lax symmetric monoidal, we have that $\nscr_X$ is 
   an $\mathbb{E}_\infty$-algebra of $\mathcal{DM}^{\mathrm{\'et}}(X)$. Using the proof of the same result for the Betti realisation in \cite[Theorem 1.93]{ayoubAnabelianPresentationMotivic2022}, 
   we prove 
   \begin{prop}[\Cref{Norimodules}]
        For each scheme $X$ of finite type over some field $k$ of characteristic zero,
         denote by $p_X:X\to\Spec\Q$ the unique morphism. Then $p_X^*\nscr_\Q\simeq\nscr_X$ and the natural functor 
         \[\mathrm{Mod}_{\nscr_X}(\mathcal{DM}^{\mathrm{\'et}}(X))\to\mathcal{DN}(X)\] 
         is an equivalence of categories that commutes with pullback functors.
   \end{prop}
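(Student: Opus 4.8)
The plan is to recognise $\mathcal{DN}(X)$ as a category of modules via the adjunction $\mathrm{Nor}^*\dashv\mathrm{Nor}_*$, following the template used by Ayoub for the Betti realisation in \cite[Theorem 1.93]{ayoubAnabelianPresentationMotivic2022}. Since $\mathcal{DN}(X)$ is $\mathcal{DM}(X)$-linear through the symmetric monoidal colimit‑preserving functor $\mathrm{Nor}^*$, and $\nscr_X=\mathrm{Nor}_*(\mathbf{1})$ is the endomorphism $\mathbb{E}_\infty$-algebra of the unit, there is a canonical functor $\Phi_X\colon\mathrm{Mod}_{\nscr_X}(\mathcal{DM}(X))\to\mathcal{DN}(X)$. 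By the standard recognition criterion (Barr--Beck--Lurie), $\Phi_X$ is an equivalence provided that: (a) $\mathrm{Nor}_*$ preserves colimits; (b) $\mathrm{Nor}_*$ is conservative; and (c) the projection map $M\otimes\mathrm{Nor}_*N\to\mathrm{Nor}_*(\mathrm{Nor}^*M\otimes N)$ is an equivalence for all $M\in\mathcal{DM}(X)$, $N\in\mathcal{DN}(X)$ --- for (a) together with (c) identifies the monad $\mathrm{Nor}_*\mathrm{Nor}^*$ on $\mathcal{DM}(X)$ with $-\otimes\nscr_X$. So the first task is to verify (a), (b), (c).

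Here (a) is immediate: $\mathrm{Nor}^*$ is the indization of $\mathrm{Nor}^*\colon\mathcal{DM}_c(X)\to\dcal^b(\Mp(X))$, hence preserves compact objects, so $\mathrm{Nor}_*$ commutes with filtered colimits, and being exact it preserves all colimits. For (c) I would first treat $M$ dualizable, by a Yoneda computation using that $\mathrm{Nor}^*$, being symmetric monoidal, preserves dualizable objects; since both sides of the projection map are colimit‑preserving in $M$ (using (a)) and $\mathcal{DM}(X)$ is generated under colimits by the dualizable objects $f_\#\Q_Y(n)[m]$ with $f$ smooth, (c) follows in general. For (b), conservativity of $\mathrm{Nor}_*$ is equivalent to the essential image of $\mathrm{Nor}^*$ generating $\mathcal{DN}(X)$ under colimits; since $\mathrm{Nor}^*$ preserves compacts and the compacts of $\mathcal{DN}(X)$ are $\dcal^b(\Mp(X))$, it is enough that $\dcal^b(\Mp(X))$ be generated as a thick subcategory by $\mathrm{Nor}^*(\mathcal{DM}_c(X))$. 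This holds by the very construction of perverse Nori motives: $\Mp(X)$ is generated, under subobjects, quotients and extensions, by objects that are, up to shift, of the form $\mathrm{Nor}^*(C)$ for $C\in\mathcal{DM}_c(X)$ --- in the absolute case these are Nori's cohomology objects of good pairs, for which cohomology is concentrated in a single degree --- so, since a thick subcategory is closed under the relevant fibres and cofibres, $\langle\mathrm{Nor}^*(\mathcal{DM}_c(X))\rangle_{\mathrm{thick}}$ contains the heart and hence all of $\dcal^b(\Mp(X))$. This yields $\Phi_X$ an equivalence for every finite type $k$-scheme $X$.

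The more delicate point is the identification $p_X^*\nscr_{\Spec\Q}\simeq\nscr_X$. Taking right adjoints in the compatibilities $\mathrm{Nor}^*f^*\simeq f^*\mathrm{Nor}^*$ and $\mathrm{Nor}^*f_!\simeq f_!\mathrm{Nor}^*$ of the Main Theorem (which hold on the ind-categories because every functor in sight preserves compacts and colimits) shows that $\mathrm{Nor}_*$ commutes with $f_*$ and with $f^!$; since $\mathrm{Nor}_*$ also commutes with Tate twists (projection formula for invertible objects is automatic) and shifts, purity $g^!\simeq g^*(d)[2d]$ for $g$ smooth of relative dimension $d$ gives that $\mathrm{Nor}_*$ commutes with $g^*$ for every smooth $g$; in particular $g^*\nscr_Y\simeq\nscr_X$ for $g\colon X\to Y$ smooth, so $\nscr_X\simeq(X\to\Spec\Q)^*\nscr_{\Spec\Q}$ when $X$ is smooth over $\Q$. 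For the general case I would proceed in two steps. First, the case of a field: writing $\Spec k$ as a cofiltered limit of smooth affine $\Q$-schemes with étale transition maps and using the continuity of $\mathcal{DM}(-)$ together with that of $\dcal^b(\Mp(-))$ --- here \cite[Proposition 6.8]{ivorraFourOperationsPerverse2022} applies, the étale transition pullbacks being perverse $t$-exact --- and the fact that $\mathrm{Nor}_*$ commutes with étale pullback, one obtains $\nscr_{\Spec k}\simeq b^*\nscr_{\Spec\Q}$ for the structure map $b\colon\Spec k\to\Spec\Q$. Second, general $X$ (which we may take reduced) by Noetherian induction on $\dim X$: one shows that $X\mapsto\nscr_X$ and $X\mapsto p_X^*\nscr_{\Spec\Q}$ both satisfy cohomological cdh-descent. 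For a blow-up square with $\pi\colon X'\to X$ proper, $\iota\colon Z\hookrightarrow X$ closed with $\pi$ an isomorphism over $X\setminus Z$, and $E=\pi^{-1}(Z)$, localization and proper base change yield $\Q_X\simeq\pi_*\Q_{X'}\times_{\kappa_*\Q_E}\iota_*\Q_Z$ in $\mathcal{DM}(X)$, and likewise in $\mathcal{DN}(X)$ (using its six-functor formalism); applying the exact functor $\mathrm{Nor}_*$ and using $\mathrm{Nor}_*\pi_*\simeq\pi_*\mathrm{Nor}_*$ gives $\nscr_X\simeq\pi_*\nscr_{X'}\times_{\kappa_*\nscr_E}\iota_*\nscr_Z$, while tensoring the $\mathcal{DM}$-square with $p_X^*\nscr_{\Spec\Q}$ and using the projection formula for the proper map $\pi$ gives the analogous formula for $p_X^*\nscr_{\Spec\Q}$. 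The exchange transformation $p_X^*\nscr_{\Spec\Q}\to\nscr_X$ (the mate of $\mathrm{Nor}^*p_X^*\simeq p_X^*\mathrm{Nor}^*$) is compatible with both descriptions; as $Z$ and $E$ have dimension $<\dim X$, and $X'$, being smooth over $k$, is handled by the smooth case together with the field case, the induction closes (the zero-dimensional base case being subsumed, $X$ then being étale over $\Spec k$).

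Finally, for the compatibility with pullbacks: for $f\colon X\to Y$ one has $f^*\nscr_Y\simeq\nscr_X$ (from $p_X=p_Y\circ f$ and the previous point), so $f^*$ induces a functor $\mathrm{Mod}_{\nscr_Y}(\mathcal{DM}(Y))\to\mathrm{Mod}_{\nscr_X}(\mathcal{DM}(X))$, and the square formed by this functor, the equivalences $\Phi$, and $f^*\colon\mathcal{DN}(Y)\to\mathcal{DN}(X)$ commutes, because it does on free modules --- where $\Phi$ sends $N\otimes\nscr$ to $\mathrm{Nor}^*N$, and $f^*$ commutes with $\mathrm{Nor}^*$ and with $\otimes$ --- and all functors involved are colimit‑preserving. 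I expect the main obstacle to be the identification $p_X^*\nscr_{\Spec\Q}\simeq\nscr_X$ for singular $X$: everything else is either a formal consequence of the recognition criterion or follows directly from the compatibilities of the Main Theorem, whereas this step requires making the comparison descend along blow-up squares --- which works precisely because $\mathrm{Nor}_*$ is exact and commutes with $f_*$, not merely with $f^*$ --- and, in the base case, an appeal to the continuity of perverse Nori motives along pro-étale covers.
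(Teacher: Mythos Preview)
Your overall strategy via Barr--Beck--Lurie is sound and parallels Ayoub's template, but step (b) --- conservativity of $\mathrm{Nor}_*$, equivalently $\langle\mathrm{Nor}^*(\mathcal{DM}_c(X))\rangle_{\mathrm{thick}}=\dcal^b(\Mp(X))$ --- is where the real content lies, and your argument glosses over it. You claim $\Mp(X)$ is generated by objects which are, up to shift, of the form $\mathrm{Nor}^*(C)$, under subobjects, quotients and extensions; even granting this, a thick subcategory is closed under cones and summands but \emph{not} under subobjects or quotients in the heart, so it does not follow that the heart lies in it. The appeal to good pairs only tells you that $\HHp^0\mathrm{Nor}^*(C)$ generates $\Mp(X)$ abelianly, not that these truncations themselves belong to the thick subcategory. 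The paper fills this gap over a field using weight theory: every $M\in\Mp(k)$ has a weight filtration whose graded pieces are direct summands of $\HH^n(\pi_Y)_*\Q_Y(i)$ with $\pi_Y$ smooth projective, and since $(\pi_Y)_*\Q_Y(i)$ is pure it splits as a direct sum of shifts of its cohomology --- so each graded piece is a summand of something in the image of $\mathrm{Nor}^*$. Over general $X$ the paper then runs a Noetherian induction using localisation and continuity (spreading out from the generic point). None of this is supplied by your sketch for (b), and I do not see how to avoid it.

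Your route to $p_X^*\nscr_{\Spec\Q}\simeq\nscr_X$ via smooth pullback, the field case, and blow-up descent is genuinely different from the paper's and does work (relying on resolution of singularities, harmless here), but the paper's argument is more direct and avoids both resolution and cdh-descent: it reduces full faithfulness over any $X/k$ to the field $k$ via the $(\pi_X^*,(\pi_X)_*)$ adjunction and internal $\sHom$, turning the question into whether $P\otimes\nscr_k\to\mathrm{Nor}_*\mathrm{Nor}^*P$ is an equivalence for $P=(\pi_X)_*\sHom(C,M)\in\mathcal{DM}(k)$ --- which holds because over a field every object is a colimit of dualisables. One small correction to your field case: writing $\Spec k$ as a limit of smooth affine $\Q$-schemes with \emph{\'etale} transitions only works for $k$ finitely generated over $\Q$; in general you need a double limit, first over finitely generated subfields $k_i$ (transition pullbacks between points are perverse $t$-exact, so \cite[Proposition~6.8]{ivorraFourOperationsPerverse2022} still applies), then over opens of a smooth model of each $k_i$.
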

   Although we do not go into the details, the same arguments would prove that the $\infty$-category $\mathcal{DH}^\mathrm{geo}(X)\subset\Ind\dcal^b(\mathrm{MHM}(X))$ of objects of geometric origin in the indization 
   of the derived category of mixed Hodge modules is also the category of modules over some algebra 
   $\mathscr{H}_X$ in $\mathcal{DM}^{\mathrm{\'et}}(X)$ (See \cite{SwannMHM} for more details). This shows that Drew's constructions in \cite{drewMotivicHodgeModules2018} indeed provide an module presentation of objects of geometric origin in mixed Hodge modules (and even, by considering motives enriched in mixed Hodge structures, a presentation of a bigger full subcategory of all mixed Hodge modules as modules in a motivic category).

   As the functor $(\acal,\ccal)\mapsto\mathrm{Mod}_\acal(\ccal)$ that sends a symmetric monoidal $\infty$-category pointed at an algebra object $\acal\in\mathrm{CAlg}(\ccal)$ to the $\infty$-category 
   of modules over $\acal$ preserves colimits, we obtain the following corollary:
   \begin{cor}[\Cref{C0ctNori}]
        The $\infty$-functor $X\mapsto\dcal^b(\Mp(X))$ has an unique extension $\mathcal{DN}_c$ to quasi-compact and quasi-separated schemes of characteristic zero such that 
        for every projective system $(X_i)_i$ of such schemes with affine transitions morphisms the natural functor 
        \[ \colim_i\mathcal{DN}_c(X_i)\to\mathcal{DN}_c(\lim_i X_i)\] 
        is an equivalence of $\infty$-categories.
   \end{cor}

   Now that a well behaved comparison functor $\mathrm{Nor}^*:\mathcal{DM}^{\mathrm{\'et}}_c(X)\to\mathcal{DN}_c(X)$ is available, one wants to see to what 
   extend we can compare both categories. We have a result in that direction, 
   conditional to the existence of a motivic t-structure on $\mathrm{DM}_\mathrm{gm}(K)$ for every characteristic zero field $K$, a deep conjecture 
   that we know implies all standard conjectures in characteristic zero (\cite{MR2953406}). Moreover, by the work of Bondarko in \cite[Theorem 3.1.4]{MR3347995}, the existence of motivic $t$-structure for all fields of characteristic zero implies that  
   for all finite type $k$-schemes, we have a perverse and a constructible t-structure on $\mathcal{DM}^{\mathrm{\'et}}_c(X)$.  
\begin{thm}[\Cref{IfConjThenSame}]
	\label{IfConjThenSameIntro}
	Assume that a motivic t-structure exists for all fields of characteristic $0$. Let $X$ be a finite type $k$ scheme for such a field $k$.
	Then the heart of the perverse t-structure of $\mathcal{DM}^{\mathrm{\'et}}_c(X)$ is canonically equivalent to the category $\Mp(X)$ of perverse Nori motives. Moreover, the functor
	$\mathrm{Nor}^* : \mathcal{DM}^{\mathrm{\'et}}_c(X)\to\dcal^b(\Mp(X))$ is an equivalence of stable $\infty$-categories. This implies that $\mathcal{DM}^{\mathrm{\'et}}_c(X)$ is the derived category of both its perverse and its constructible heart.
\end{thm}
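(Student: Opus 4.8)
\emph{Strategy.} Write $\mathcal{C}=\mathcal{DM}_c(X)$, $\mathcal{D}=\dcal^b(\Mp(X))$, and let $\rho_\ell$ be the $\ell$-adic realisation on either side. By \cite[Theorem 3.1.4]{MR3347995} the standing hypothesis equips $\mathcal{C}$ with a perverse and a constructible $t$-structure; normalise them so that $\rho_\ell\colon\mathcal{C}\to\D^b_c(X,\Qlb)$ is $t$-exact for both. On $\mathcal{D}$ the analogous $t$-structures, the $t$-exactness of $\rho_\ell$, and the faithful exactness of $\Mp(X)\to\mathrm{Perv}(X,\Qlb)$ belong to the Ivorra--Morel--Terenzi package. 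The plan has four steps: (1) deduce that $\mathrm{Nor}^*$ is $t$-exact and conservative; (2) identify the perverse and constructible hearts of $\mathcal{C}$ with those of $\mathcal{D}$; (3) show that $\mathcal{C}$ is the bounded derived category of its constructible heart; (4) combine. For (1): since $\mathrm{Nor}^*$ commutes with the six operations and with $\rho_\ell$ we have $\rho_\ell^{\mathcal{C}}\simeq\rho_\ell^{\mathcal{D}}\circ\mathrm{Nor}^*$; as $\rho_\ell^{\mathcal{D}}$ is $t$-exact and conservative it reflects the $t$-structure, so the $t$-exactness of the composite forces $\mathrm{Nor}^*$ to be $t$-exact, for both $t$-structures. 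It follows that $\mathrm{Nor}^*$ induces exact functors $\mathrm{Nor}^\heartsuit$ on perverse and on constructible hearts, faithful because $\rho_\ell$ is; and since the $t$-structures are bounded and a faithful exact functor of abelian categories is conservative, $\mathrm{Nor}^*$ is conservative.

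\emph{The hearts.} Over a field $X=\Spec k'$ the perverse heart of $\mathcal{DM}_c(k')$ is the conjectural $\mathcal{MM}(k')$ and $\Mp(\Spec k')=\mcal_\mathrm{Nori}(k')$, and $\mathrm{Nor}^\heartsuit\colon\mathcal{MM}(k')\to\mcal_\mathrm{Nori}(k')$ is exact, faithful, compatible with the Betti and $\ell$-adic fibre functors, and sends the diagram of cohomological motives to Nori's diagram. Since $\mathrm{DM}_{\mathrm{gm}}(k')$ is generated as a thick subcategory by motives of smooth varieties, $\mathcal{MM}(k')$ is generated as an abelian category by the $\HH^n_{\mathrm{mot}}$ of $k'$-varieties; combined with the universal property of Nori motives this should force $\mathrm{Nor}^\heartsuit$ to be an equivalence over a field (check that both round-trip composites are isomorphic to the identity). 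For general $X$ I would argue by Noetherian induction on $\dim X$: choose a dense open $j\colon V\hookrightarrow X$ over which $\mathrm{Nor}^\heartsuit$ is an equivalence (which exists because, over a smooth dense open $U$, the equivalence at the generic points spreads out, using continuity of $\mathcal{DM}_c$ and continuity of $\Mp$ along the perverse $t$-exact open immersions), with closed complement $i\colon Z\hookrightarrow X$ over which $\mathrm{Nor}^\heartsuit$ is an equivalence by the induction hypothesis. Both ${}^{p}\mathcal{C}^\heartsuit$ and $\Mp(X)$ are reconstructed from their restrictions to $V$ and $Z$ by Beilinson's gluing of $t$-structures along the recollement $(j_!,j^*,j_*,i_*,i^*,i^!)$ together with the unipotent nearby-cycles functor, and $\mathrm{Nor}^*$ commutes with all of these, so $\mathrm{Nor}^\heartsuit\colon{}^{p}\mathcal{C}^\heartsuit\to\Mp(X)$ is an equivalence, and likewise ${}^{\mathrm{ct}}\mathcal{C}^\heartsuit\simeq\Mc(X)$ on constructible hearts. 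As a by-product, $\rho_\ell$ is now conservative on $\mathcal{DM}_c(Y)$ for every finite type $k$-scheme $Y$: on the heart over a field it is the faithful fibre functor of $\mcal_\mathrm{Nori}$, and this spreads out pointwise.

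\emph{$\mathcal{C}$ as a derived category, and the conclusion.} Next I would prove that the canonical realisation $\mathrm{real}\colon\dcal^b({}^{\mathrm{ct}}\mathcal{C}^\heartsuit)\to\mathcal{C}$ is an equivalence. Essential surjectivity is immediate since $\mathcal{C}$ is generated as a stable subcategory by its constructible heart (motives of smooth $X$-schemes carry finite filtrations with heart-shifted graded pieces). Full faithfulness reduces by dévissage along the constructible $t$-structure to showing that $\Ext^n_{{}^{\mathrm{ct}}\mathcal{C}^\heartsuit}(A,B)\to\Hom_{\mathcal{C}}(A,B[n])$ is an isomorphism for heart objects $A,B$, which one proves as in Nori's theorem \cite{MR1940678} (as in the proofs of \Cref{cccp} and \Cref{memeDer}) by resolving $A$ by ``cellular'' objects $j_!(-)$ supported on affine strata; the single geometric input, the motivic avatar of Beilinson's basic lemma, follows from the $\ell$-adic one because the relevant $\HH^k_{\mathrm{mot}}$ over an affine open vanishes iff its $\rho_\ell$-realisation does, by $t$-exactness and the conservativity established above. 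Granting $\mathcal{C}\simeq\dcal^b({}^{\mathrm{ct}}\mathcal{C}^\heartsuit)$, and recalling $\mathcal{D}\simeq\dcal^b(\Mc(X))$ from \Cref{cccp}/\Cref{memeDer}, the functor $\mathrm{Nor}^*$ becomes a $t$-exact functor between bounded derived categories of abelian categories inducing the equivalence ${}^{\mathrm{ct}}\mathcal{C}^\heartsuit\simeq\Mc(X)$ on hearts; such a functor acts on $\Hom_{\mathcal{C}}(A,B[n])$ as the map induced by $\mathrm{Nor}^\heartsuit$ on Yoneda $\Ext$-groups, hence is fully faithful, hence — with essential surjectivity — an equivalence. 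Restricting to perverse hearts yields the canonical equivalence ${}^{p}\mathcal{C}^\heartsuit\simeq\Mp(X)$, and transporting \Cref{cccp}/\Cref{memeDer} through $\mathrm{Nor}^*$ shows that $\mathcal{DM}_c(X)$ is the derived category of both its perverse and its constructible hearts.

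\emph{Expected obstacle.} The hard part will be Step 3: Nori's cellular-resolution argument has to be run inside $\mathcal{DM}_c(X)$ \emph{before} it is known to be a derived category, and it rests on the motivic basic lemma, hence on the conservativity of the $\ell$-adic realisation — which is available only because we assume the motivic $t$-structure exists, and which has to be fed in at exactly the right moment. A secondary delicate point is the heart comparison over a general base: one needs to know that Bondarko's perverse $t$-structure on $\mathcal{DM}_c(X)$ is compatible with the recollement functors and with unipotent nearby cycles, so that it genuinely matches the gluing presentation of Ivorra--Morel's $\Mp(X)$; this is plausible because that $t$-structure is pinned down by the recollement-compatible $\ell$-adic realisation, but it must be checked.
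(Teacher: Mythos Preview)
Your Step~2 is much harder than necessary. The paper observes (its \Cref{PUtStruct}) that whenever a triangulated category $\mathcal{T}$ carries a $t$-structure together with a conservative $t$-exact functor $r:\mathcal{T}\to\Dd^b(\acal)$, the pair $(\HH^0,r_{|\heartsuit})$ is already the universal factorisation defining the Nori-type abelian category. Under the conjecture the $\ell$-adic realisation is conservative and $t$-exact on $\mathcal{DM}_c(X)$ for \emph{every} $X$, so the perverse heart of $\mathcal{DM}_c(X)$ satisfies the very universal property that defines $\Mp(X)$; the equivalence ${}^{p}\mathcal{C}^\heartsuit\simeq\Mp(X)$ is then a one-line tautology, with no need for spreading out, gluing, or nearby cycles.

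Your Step~3 has a genuine gap. Nori's effaceability argument (the paper's \Cref{section1}) is an induction on $n$ for $\mathbb{A}^n_k$, and the base case $n=0$ \emph{uses} the hypothesis that $\Dd(\Spec k)\simeq\dcal^b(\Mc(\Spec k))$ (see the hypotheses listed at the top of that section and the first line of the proof of \Cref{beforethm1}). For perverse Nori motives or mixed Hodge modules this is known independently; for $\mathcal{DM}_c(\Spec k)$ it is precisely the statement $\mathrm{DM}_{\mathrm{gm}}(k,\Q)\simeq\dcal^b(\mathcal{MM}(k))$, which is \emph{not} a formal consequence of the existence of the $t$-structure---indeed the paper's introduction advertises exactly this implication as a nontrivial payoff of the theorem (the ``even more optimistic'' expectation of André). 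So your argument is circular at the base of the induction: you need the conclusion over $\Spec k$ to run Nori's method, and Nori's method is how you propose to get the conclusion.

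The paper sidesteps this entirely. Having identified the hearts, it takes the canonical functor $\gamma_X:\dcal^b(\Mp(X))\to\mathcal{DM}_c(X)$ from the universal property of $\dcal^b$, checks $\nu_X\gamma_X\simeq\mathrm{Id}$ (both restrict to the identity on the heart, so \Cref{RestrToHeart} applies), and then---this is the key idea you are missing---shows that the $\gamma_X$ assemble into a morphism of pullback formalisms satisfying the hypotheses of \Cref{univDA}. Since $\mathcal{DM}$ is the \emph{initial} such object (Drew--Gallauer), the endomorphism $\Ind(\gamma)\circ\Ind(\nu)$ of $\mathcal{DM}$ must be the identity, whence $\gamma_X\nu_X\simeq\mathrm{Id}$. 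The equivalence $\mathcal{DM}_c(X)\simeq\dcal^b(\Mp(X))$ then \emph{yields} the derived-category statement over every $X$ (and in particular over $\Spec k$) as a corollary, rather than requiring it as input. The moral: use the rigidity of $\mathcal{DM}$, not an internal $\Ext$-comparison.
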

In particular, over a field we would have $\mathrm{DM}_\mathrm{gm}(k,\Q)\simeq\Dd^b(\mathcal{MM}(k))$, showing that the ``even more optimistic'' expectation 
of \cite[21.1.8]{MR2115000} is in fact no more optimistic than expecting the motivic t-structure to exist.

\section*{Organisation of the paper.}
After some recollections on motivic constructions in \Cref{SectionRecoll}, in  
\Cref{section1} we review Nori's proof in \cite{MR1940678} that the category of
 cohomologically constructible complexes of sheaves on the $\C$-points of a quasi-projective variety $X$ is equivalent to the derived 
 category of the abelian category of constructible sheaves and show that it gives a proof that the derived categories of the perverse and constructible hearts on the categories of mixed 
 Hodge modules (or of perverse Nori motives) are equivalent. The only new idea here was to replace 
 local systems with dualisable objects and cohomology of the global sections 
 with $\mathrm{Hom}_{\Dd^b(\Mp(X))}(\Q_X,-[q])$. The main point of the proof is showing effaceability of cohomology of the constant object on an affine $n$-space.

We divided \Cref{section2} in two parts. In the first one we prove general results about the functoriality of the functor $\dcal^b(-)$ sending an abelian category to 
its bounded derived $\infty$-category. 
 In the second part we prove that the $6$ operations on  mixed Hodge modules and on perverse Nori motives can be lifted to $\infty$-categorical functors of stable $\infty$-categories.

In \Cref{Sectionrealisation}, we use what has been proven on $\infty$-categorical enhancements to construct the realisation functors. We then prove that Nori motives are modules over an algebra in $\mathcal{DM}^{\mathrm{\'et}}(X)$. The arguments follow more or less Ayoub's proof of the similar statement for the Betti realisation in \cite{ayoubAnabelianPresentationMotivic2022}.
Finally we prove the result related to the  t-structure conjecture. The argument is the repeated use of the universal properties of $\mathcal{DM}^{\mathrm{\'et}}_c(X)$ and $\Mp(X)$ that force the Nori realisation functor to be an equivalence when $\mathcal{DM}^{\mathrm{\'et}}_c(X)$ has a perverse t-structure. 

\subsection*{Forthcoming and future work}
In \cite{SwannMHM}, using the $\infty$-categorical enhancement of mixed Hodge modules we will  extend them to Artin stacks, together with the operations, t-structures and weights.
In joint work with Raphaël Ruimy \cite{integralNori}
we construct an integral version of the derived category of perverse Nori motives. This gives an Abelian category of motivic sheaves with integral coefficients over any finite dimensional scheme of characteristic zero. 
\subsection*{Notations and conventions}

For the whole paper, $k$ is a field of characteristic $0$. When $k$ is a subfield of $\C$, if $X$ is a finite type $k$-scheme,
we denote by $\mathrm{D}_c^b(X,\Q)$ the category of constructible complexes of sheaves on $X(\C)$. By \cite{MR0923133}, it is also the derived category of
the abelian category of perverse sheaves $\mathrm{Perv}(X,\Q)$ defined in \cite{MR0751966}. For a general field $k$,
we denote by $\mathrm{D}^b_c(X,\Q_\ell)$ the category of constructible bounded complexes $\Q_\ell$-adic étale sheaves on $X$ as in \cite{MR4609461}.

We use the language of $\infty$-categories as developed in \cite{lurieKerodon} by Lurie, after Joyal's work on quasi-categories. We fix an universe and call small an $\infty$-category whose mapping spaces and core are in this universe. We denote by $\Delta$ the simplicial category. We denote by $\catinfty$ the $\infty$-category of
 small $\infty$-categories 	and by $\mathrm{Pr}^L$ the $\infty$-category $\infty$-category of presentable $\infty$-categories.
If $\ccal$ is a symmetric monoidal $\infty$-category we denote by $\mathrm{CAlg}(\ccal)$ the commutative algebras in $\ccal$, and if $A\in\ccal$ is a commutative algebra, we denote by $\mathrm{Mod}_A(\ccal)$ the category of modules over $A$ in $\ccal$. For example, the category of $H\Q$-modules in spectra $\mathrm{Mod}_\Q := \mathrm{Mod}_{H\Q}(\mathrm{Sp})$ is the unbounded derived category of the abelian category of $\Q$-vector spaces. 
We try to always put the index $\ccal$ when mentioning $\Hom$, as in $\Hom_\ccal(A,B)$. When $\ccal$ is an ordinary category, this is the $\Hom$-set, and when 
$\ccal$ is an $\infty$-category this is the $\pi_0$ of the mapping space $\mathrm{Map}_\ccal(A,B)$. In stable a $\infty$-category $\ccal$, the mapping spectra will be denoted by $\mathrm{map}_\ccal(-,-)$.

\subsection*{Acknowledgements} This work was written during my PhD supervised by Sophie Morel at the UMPA in Lyon. I would like to thank her
for the faith and constant attention she has for my work
and for the immense amount of time she is spending to accompany me in this thesis. I am grateful to the anonymous referee for their valuable comments and suggestions that helped make this paper more readable. I am much obliged to Joseph Ayoub for finding a mistake during a talk I gave, whose correction simplified the third part of this paper. I thank also Frédéric Déglise for sharing some ideas with me and Robin Carlier for taking the
time to explain to me so many facts about $\infty$-categories. I had valuable email exchanges with Marc Hoyois.
I also had useful conversations with Georg Lehner, Raphaël Ruimy and Luca Terenzi.

\section{Recollections.}
\label{SectionRecoll}
\subsection{Categories of étale motives.}
Ayoub (\cite{MR2602027} and \cite{MR3205601}), Cisinski-Déglise (\cite{MR3971240} and \cite{MR3477640}) and Voevodsky (\cite{MR1883180}) have constructed
triangulated categories of motivic sheaves, that are the relative versions of the triangulated category of geometric motives $\mathrm{DM}_{\mathrm{gm}}(k)$.
In this paper, we are interested in the étale versions. We use the stable $\infty$-categorical construction due to Robalo (\cite[Section 2.4]{MR3281141}).
Also, we only consider $\Q$-linear categories unless specified otherwise.

Let $S$ be a scheme. One starts with the category $\mathrm{Sm}_S$ of smooth $S$-schemes, and considers the $\infty$-category $\mathrm{PSh}(\mathrm{Sm}_S,\mathrm{Sp})$ presheaves of spectra on it. The category of rational effective étale motivic sheaves is the full
subcategory of $\mathrm{PSh}(\mathrm{Sm}_S,\mathrm{Sp})$ whose objects are the presheaves $\fcal$ that are $\Q$-linear (\emph{i.e.} whose image lands in $\dcal(\Q)\simeq\mathrm{Mod}_{H\Q}\subset \mathrm{Sp}$),  $\A^1$-invariant (\emph{i.e.} the natural map $\fcal(X)\to\fcal(\A^1_X)$ is an equivalence for any smooth $S$-scheme $X$), and satisfy étale hyper-descent (\emph{i.e.} for
any étale hyper cover $U_\bullet\to X$ of a smooth $S$-scheme $X$, the natural map $\fcal(X)\to \lim_{\Delta}\fcal(U_n)$ is an equivalence). It is a symmetric monoidal stable $\infty$-category, the tensor product being inherited from the monoidal structure of $\mathrm{Sm}_S$ given by the fiber product.
Inverting the object $\Q(1):=\mathrm{cofib}(S\xrightarrow{1}\mathrm{Gm}_S)$ for the tensor product gives the category that we will denote by $\mathcal{DM}^{\mathrm{\'et}}(S)$ and call the category of Voevodsky (étale) motives. 

In symbols, we have \begin{equation}\label{formuleDM}\mathcal{DM}^{\mathrm{\'et}}(S):= (\mathrm{L}_{\A^1}\mathrm{Shv}_{\acute{e}t}^\wedge(\mathrm{Sm}_S,\dcal(\Q)))[(\Gm_S/1_S)^{-1}].\end{equation}
This is a stable presentably symmetric monoidal $\infty$-category.

We will denote by $\mathcal{DM}^{\mathrm{\'et}}_c(S)$ the full subcategory of $\mathcal{DM}^\mathrm{\'et}(S)$ whose objects are compact objects (\emph{i.e.} the $M$ in $\mathcal{DM}^{\mathrm{\'et}}(S)$ such that $\Hom_{\mathcal{DM}^{\mathrm{\'et}}}(M,-)$ commutes with small filtered colimits).
If $X$ is a smooth $S$-scheme and $j$ is an integer, we denote by $M(X)(j)$ the image of $X$ under the Yoneda functor $\mathrm{Sm}_S\to\mathcal{DM}^\mathrm{\'et}(S)$, tensored by $\Q(1)^{\otimes j}$.
As we work rationally, one can show (\cite[Proposition 8.3]{MR3205601}) that the subcategory of compact objects coincides with the idempotent complete stable subcategory generated
by motives of the form $\mathrm{M}(X)(-n)$ for $X$ a smooth $S$-scheme and $n\in\N$. As $\mathcal{DM}^{\mathrm{\'et}}(S)$ is compactly generated,
we have $\Ind\mathcal{DM}^{\mathrm{\'et}}_c(S)=\mathcal{DM}^{\mathrm{\'et}}(S)$. Moreover, if $S =\Spec k$ is the spectrum of a field, then there is a natural equivalence
$\mathrm{ho}(\mathcal{DM}^{\mathrm{\'et}}_c(\Spec k))\simeq \mathrm{DM}_\mathrm{gm}(k,\Q)$ between the homotopy category of étale motives and the classical triangulated  category of geometric Voevodsky motives over a field.

Let $S$ be a scheme. The categories $\mathcal{DM}^{\mathrm{\'et}}(X)$ for $X$ ranging through $S$-schemes can be put together to give a functor $\mathrm{Sch}_S^\op\to\mathrm{CAlg}(\mathrm{Pr}^L)$ from 
the category of finite type $S$-schemes to the $\infty$-category of presentable symmetric monoidal $\infty$-categories (see \cite[Section 9.1]{robaloThese}). Moreover, 
this functor $\mathcal{DM}^{\mathrm{\'et}}$ is a coefficient system in the sense 
	of Drew and Gallauer (see \Cref{defCoSys}), and the $\infty$-category of constructible objects $\mathcal{DM}^{\mathrm{\'et}}_c(-)$ is part of a six functors 
	formalism (see \cite[Section A.5]{MR3971240} for a detailed definition), which includes the fact that all pullbacks $f^*$ have right adjoints $f_*$, that there exists exceptional functoriality consisting of 
	an adjoint pair $(f_!,f^!)$, such that $f^!\simeq f^*$ for $f$ étale and $p_*\simeq p_!$ for $p$ proper.
	If one replaces the étale topology by the Nisnevich topology in the definition \cref{formuleDM} of $\mathcal{DM}^\mathrm{\'et}(S)$, one obtain $\mathcal{SH}_\Q(S)$ the rationalisation of the stable motivic homotopy category $\mathcal{SH}(S)$, which can be obtained by further replacing in \cref{formuleDM} the $\infty$-category $\dcal(\Q)$ with the $\infty$-category $\mathrm{Sp}$ of spectra and which is the stabilisation of the $\A^1$-homotopy category introduced in \cite{MR1813224}. Except for the computation at $S= \Spec(k)$, the results stated in this paragraph and the previous one also hold for $\mathcal{SH}(S)$ and $\mathcal{SH}_\Q(S)$.

	When $S$ is of finite type over a subfield of the complex numbers, Ayoub constructed in \cite{MR2602027} the Betti realisation of $\mathcal{DM}^{\mathrm{\'et}}(S)$ with values in the
derived category of sheaves on the analytification $S^{an}$ of $S$ 
which restricts to $\mathcal{DM}^{\mathrm{\'et}}_c(S)$ and then lands into  the stable category $\mathcal{D}_c^b(S(\C),\Q)$ of bounded complexes of sheaves with constructible cohomology.
This category is both the derived category of the abelian category perverse sheaves (\cite{MR0923133})
and of the abelian category of constructible sheaves (\cite{MR1940678}).
One can find in \cite{MR3205601} and \cite{MR3477640} the construction
of the $\ell$-adic realisation functors on étale motives with values in the derived category of $\ell$-adic sheaves. An $\infty$-categorical version of this 
functor can be found in \cite[2.1.2]{MR4061978}. This is a functor $\mathcal{DM}^{\mathrm{\'et}}_c(S)\to\dcal_c^b(S,\Q_\ell)$,
the latter category being enhanced using condensed coefficients in \cite{MR4609461} where it is denoted by $\dcal_{\mathrm{cons}}(S,\Q_\ell)$. The $\ell$-adic realisation functor also has a version on the big category $\mathcal{DM}^{\mathrm{\'et}}(S)$ by taking indization. 
All the realisation functors commute with the $6$ operations.

\subsection{Perverse Nori motives.}
\label{norimot}
Let $X$ be a quasi-projective $k$-scheme with $k$ a field of characteristic zero. In \cite{ivorraFourOperationsPerverse2022}, Ivorra and S. Morel 
introduced an abelian category of perverse Nori motives $\Mp(X)$. 
The construction of the category $\Mp(X)$ goes as follows: 

Let $X$ be a quasi-projective $k$-scheme. Pick your favourite prime number $\ell$. We can compose the $\ell$-adic realisation functor
\[\rho_\ell:\mathcal{DM}^{\mathrm{\'et}}_c(X)\to \dcal^b(X_{\acute{e}t},\Q_\ell)\]
with the perverse cohomology functor \[\HHp^0:\dcal^b(X_{\acute{e}t},\Q_\ell)\to\mathrm{Perv}(X,\Q_\ell)\] 
to obtain a homological functor $\HHp^0_\ell\colon \mathcal{DM}_c(X)\to \mathrm{Perv}(X,\Q_\ell).$ 
Let $\rcal(\mathcal{DM}^{\mathrm{\'et}}_c(X))$ be the 
abelian category of coherent modules over $\mathcal{DM}^{\mathrm{\'et}}_c(X)$: it is the full subcategory of additive presheaves 
$\fcal:\mathcal{DM}^{\mathrm{\'et}}_c(X)^\op\to\mathrm{Ab}$ that fit in exact sequences
\[y_M\to y_N\to\fcal\to 0\] where $M\to N$ is a morphism in $\mathcal{DM}^{\mathrm{\'et}}_c(X)$.

The functor $\HH^0_\ell$  factors through $\rcal(\mathcal{DM}^{\mathrm{\'et}}_c(X))$, giving a functor $$\widetilde{\HHp^0_\ell}:\rcal(\mathcal{DM}^{\mathrm{\'et}}_c(X))\to \mathrm{Perv}(X,\Q_\ell)$$.
Denote by $Z_\ell$ the kernel of $\widetilde{\HHp^0_\ell}$, that is the objects mapping to zero.
The category of perverse Nori motives $\Mp(X)$ is the quotient $\rcal(\mathcal{DM}^{\mathrm{\'et}}_c(X))/Z_\ell$. It comes with a universal functor $\HH_\mathrm{univ}\colon \mathcal{DM}^\mathrm{\'et}_c(X)\to\Mp(X)$.

By construction, $\Mp(X)$ has the following universal property: any homological functor $H:\mathcal{DM}^{\mathrm{\'et}}_c(X)\to\acal$ to an abelian category $\acal$ such for all $M\in\mathcal{DM}^{\mathrm{\'et}}_c(X)$ we have $H(M)=0$ whenever $\HHp^0_\ell(M)=0$  factors uniquely 
through the universal functor \[\mathcal{DM}^{\mathrm{\'et}}_c(X)\xrightarrow{\HH_\mathrm{univ}}\Mp(X).\]  A priori this category depends on the chosen $\ell$, but one can show, using a continuity argument, that this is not the case. Indeed for finite type fields (or more generally, for fields embeddable into $\C$), 
the Betti realisation can also be used to define $\Mp(X)$ and the comparison between the $\ell$-adic realisation functor and the Betti realisation functor (see for example the proof of \cite[Proposition 6.10.]{AyoubWeil} that works over any finite type $\C$-scheme) gives that the two constructions of perverse Nori motives agree.

In fact, every realisation functor existing on $\mathcal{DM}^{\mathrm{\'et}}_c(X)$ gives us a realisation functor on $\Mp(X)$: we have the $\ell$-adic realisation $R_\ell$ and the Betti realisation $R_B$. Ivorra and S. Morel have proven that $\Mp(\Spec k)$ coincides with the category of cohomological motives constructed by Nori, and thus also has an Hodge realisation functor.

Ivorra and S. Morel show in \cite[Theorem 5.1]{ivorraFourOperationsPerverse2022} that 
the triangulated derived category of the abelian category perverse motives $\Dd^b(\Mp(-))$ is part of a homotopical $2$-functor in 
the sense of Ayoub \cite[Definition 1.4.1]{MR2423375}. This means that they constructed four of the six operations, namely for $f$ a morphism of quasi-projective varieties we have 
$f^*,f_*,f_!,f^!$, together with Verdier duality $\mathbb{D}$ , verifying $f^!\simeq \mathbb{D}\circ f^*\circ\mathbb{D}$ and $f_!\simeq\mathbb{D}\circ f_*\circ\mathbb{D}$. In his PhD thesis Terenzi \cite{terenziTensorStructurePerverse2024} constructed 
the remaining two operations: tensor product and internal $\sHom$. This proves that $\Dd^b(\Mp(-))$ is part of a full 6 functor formalism. The $6$ operations commute with the realisation functors $R_\ell$ and $R_B$ making them morphisms of symmetric monoidal stable homotopical $2$-functors.
The construction of the operations on perverse Nori motives is very similar to Saito's construction of mixed Hodge modules.

\subsection{Mixed Hodge modules}
Let $X$ be a separated finite type $\C$-scheme. In \cite{MR1047415}, M. Saito constructed an abelian category $\mathrm{MHM}(X)$ of   mixed Hodge modules. He also constructed the six operations on $\Dd^b(\mathrm{MHM}(-))$. 
There is a faithful exact functor $\mathrm{MHM}(X)\to \mathrm{Perv}(X^\mathrm{an},\Q)$ to the abelian category of perverse sheaves on the analytification of $X$. This induces a functor $\Dd^b(\mathrm{MHM}(X))\to\Dd^b(\mathrm{Perv}(X^\mathrm{an},\Q))\to\Dd^b_c(X^\mathrm{an},\Q)$ and gives a natural transformation $\Dd^b(\mathrm{MHM}(-))\to\Dd^b_c(-,\Q)$ that commutes with the 6 operations.
If $X=\Spec \C$, then there is a natural equivalence $\mathrm{MHM}(\Spec \C)\simeq\mathrm{MHS}^p_\Q$ between polarisable mixed Hodge structures and 
  mixed Hodge modules. One of the most interesting properties of the category of mixed Hodge modules is that its \emph{lisse} objects, that is the objects such that the underlying perverse sheaf is a shift of a locally constant sheaf 
are exactly the polarisable variations of mixed Hodge structures. We will not go into details about the construction of Saito's category, and we will mostly need to know that the functor taking a complex of   mixed Hodge modules to 
its underlying complex of perverse sheaves is conservative and commutes with the operations.

\section{Adapting Nori's argument to perverse Nori motives.}
\label{section1}
\subsection*{Hypothesis on the coefficient system.}
Let $k$ be a field of characteristic zero. For this section we will denote by $\mathrm{Var}_k$ either the category of quasi-projective $k$-varieties or of separated reduced $k$-schemes of finite type, and we will call objects of $\mathrm{Var}_k$ \emph{varieties}. Choose
$\Dd:\mathrm{Var}_k^\op\to \mathrm{Triang}$ your favourite $\Q$-linear triangulated category of coefficients  with a $6$ functors formalism (by this we mean a symmetric monoidal stable homotopy $2$-functor as in Ayoub's \cite{MR2423375}) and a
conservative realisation functor $R:\Dd\to\Dd^b_c$ compatible with the operations, where $\Dd^b_c$ is either the derived category of constructible sheaves on the analytification (if $k$ is a subfield of $\C$), of the derived category of $\ell$-adic constructible sheaves. For $X\in\mathrm{Var}_k$ we will call motives the elements of $\Dd(X)$. For example $\mathrm{D}$ could be the derived category of perverse Nori motives (\cite[Theorem 5.1]{ivorraFourOperationsPerverse2022} and \cite[Main theorem]{terenziTensorStructurePerverse2024}) or of mixed Hodge modules (\cite[Theorem 0.1]{MR1047415}). Note that in the mixed Hodge modules situation the realisation is nothing more that the functor forgetting the structure of a mixed Hodge module, and is called `taking the $\Q$-structure'' by Saito. We assume that for each variety $X$ the category $\mathrm{D}(X)$ has a perverse
t-structure for which $R$ is t-exact when $\Dd^b_c(X)$ is endowed with its perverse t-structure of heart the abelian category of perverse sheaves $\mathrm{Perv}(X)$.

Recall the following definition:
\begin{defi}
	The \emph{constructible t-structure} on $\Dd(X)$ is the unique t-structure on $\Dd(X)$ that makes all pullback functors t-exact. Explicitly it is the t-structure given by
	$$\ ^\mathrm{ct}\Dd^{\leqslant 0}(X) := \{M \in \Dd(X)\mid \forall x\in X,\text{ closed, } x^*M\in \Dd^{p\leqslant 0}(\Spec\kappa(x))\}$$ and 
	$$\ ^\mathrm{ct}\Dd^{\geqslant 0}(X) := \{M \in \Dd(X)\mid \forall x\in X,\text{ closed, } x^*M\in \Dd^{p\geqslant 0}(\Spec\kappa(x))\}.$$
	That this indeed gives a t-structure can be proven by gluing, using that over a smooth variety, a lisse object (see \Cref{defiLisse}) is positive (\emph{resp.} negative) for the constructible t-structure if and only it is positive (\emph{resp.} negative) for the perverse t-structure when shifted by the dimension.
\end{defi}

Denote by $\Mc(X)$ the heart of the constructible t-structure on $\Dd(X)$.
As $R$ is t-exact and conservative, all known t-exactness results about
the $6$ functors are true in $\Dd$. We will call elements of $\Mc(X)$ \emph{constructible motives} .
Denote by $\sHom(-,-)$ and $\Q_X$ the internal $\Hom$ and unit object of $\Dd(X)$.

We assume that $\Dd(X)$ has enough structure so that there exists a natural triangulated functor $\Dd^b(\Mc(X))\to\Dd(X)$ extending the inclusion of the constructible heart (for example if $\Dd(X)$ is a
derived category as shown by Beilinson, Bernstein, Deligne and Gabber in \cite[section 3.1]{MR0751966} or more generally if $\Dd(X)$ is the homotopy category of some stable $\infty$-category $\dcal(X)$ as proven in \cite[Remark 7.4.13]{bunkeControlledObjectsLeftexact2019}),
and we assume that the natural functor $ \Dd^b(\Mc(\Spec k))\to\Dd(\Spec k)$ is an equivalence. This is true for perverse Nori motives and mixed Hodge modules.

From now on in this section by the t-structure we mean the the constructible t-structure, unless specified otherwise.

\begin{defi}
	\label{defiLisse}
	Let $X$ be a $k$-variety.
	A motive $L$ is a \emph{lisse} object if its realisation $R(L)$ is a local system, that is a locally constant bounded complex of  finite dimensional $\Q$-vector spaces.
	We denote by $\Dd^{liss}(X)$ the full subcategory of lisse objects in $\Dd(X)$.
\end{defi}
We will denote by $\Q_X\in\Mc(X)$ the unit object for the tensor structure. Its realisation is the constant sheaf.
\begin{rem}
	\label{remDual}
	Note that as $\Dd(X)$ is closed as a tensor category, given an object $M\in\Dd(X)$, we have a natural candidate for the strong dual of $M$ :
	it is $\sHom(M,\Q_X)$. Therefore, an object $M$ is dualisable if and only if the map
	\begin{equation}\label{testdual}\sHom(M,\Q_X)\otimes M \to \sHom(M,M) \end{equation}
	obtained by the $\otimes$-$\sHom$ adjunction from the map $\mathrm{ev}_M\otimes \mathrm{Id}_M : \sHom(M,\Q_X)\otimes M\otimes  M\to \Q\otimes M\simeq M$, is an isomorphism.
	We will denote by $M^\vee = \sHom(M,\Q_X)$ the dual of $M$ when $M$ is dualisable.
\end{rem}

\begin{lem}
	\label{trucliss}
	Let $X$ be a $k$-variety. An object $K\in \Dd(X)$ is lisse if and only if it is dualisable. In particular, for every
	object $N\in \Dd(X)$, there exists a dense open on which $N$ is dualisable.
	If an object $L\in\Mc(X)$ is lisse, then its dual $L^\vee$ is also an object of $\Mc(X)$.
\end{lem}

\begin{proof}
	For the first point, dualisable objects in $\Db(X,\Q)$ are exactly local systems by
	Ayoub's \cite[Lemma 1.24]{ayoubAnabelianPresentationMotivic2022} in the analytic case, and by \cite[Proposition 7.6]{MR4609461} together with \cite[Corollary 7.4.12.]{martini_presentable_2022} in the $\ell$-adic case. As the realisation functor is conservative,
	we can test if the map \Cref{testdual} is an isomorphism after applying the realisation functor. If $L\in\Mc(X)$ is dualisable, the functor $\sHom(L,-) = -\otimes L^\vee$ is t-exact because it is left and right adjoint to the exact functor $-\otimes L$. Therefore, $L^\vee=\sHom(L,\Q_X)\in\Mc(X)$ lies in the heart.

	The second point follows from the fact that after realisation any constructible motives is locally constant on a stratification, hence on some dense open subset.
\end{proof}

\subsection{Cohomology of motives over an affine variety.}

\begin{defi}
	\begin{enumerate}
		\item An additive functor $F:\acal\to \bcal$ between abelian categories is called
		      \emph{effaceable} if for any $M\in \acal$ there is an injection $M\to N$ in $\acal$ such that the induced map $F(M)\to F(N)$ is the zero map.
		\item An object $M\in \Dd(X)$ is called \emph{admissible} if the functor $\Hom_{\Dd(X)}(M,-[q]) : \Mc(X)\to \mathrm{Vect}_\Q$ is effaceable for every $q>0$.
	\end{enumerate}
\end{defi}
\begin{lem}
	\label{Lemme22}
	Let $X$ be a variety and $M$ be a constructible motive on $\A^1_X$. Assume that there is a smooth
	open subset $U\subset \A^1_X$ on which $M$ is lisse and such that the restriction of $\pi:\A^1_X\to X$ to $Z = \A^1_X\setminus U$ is
	finite and surjective. Assume that $\res{M}{Z} = 0$. Then there exists $N\in \Mc(\A^1_X)$ such that $\pi_*N = 0$ and that $M$ injects into $N$.
\end{lem}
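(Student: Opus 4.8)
The plan is to construct $N$ explicitly as a pushforward-pullback type object along the open immersion $j\colon U\hookrightarrow \A^1_X$. Since $M$ restricted to $Z$ vanishes, the canonical map $M\to j_*j^*M$ is a monomorphism after applying $\HHc^0$; more precisely, because $i^*M=0$ where $i\colon Z\hookrightarrow\A^1_X$ is the closed complement, the localisation triangle $i_*i^!M\to M\to Rj_*j^*M$ together with constructible $t$-exactness of $Rj_*$ on the heart (here I would invoke that $Rj_*$ is left $t$-exact for the constructible $t$-structure and that its $\HHc^0$ on a lisse object on $U$ stays in $\Mc$) shows that $M$ injects into $\HHc^0(Rj_*j^*M)$. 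So the first step is: set $L=j^*M\in\Dd^{liss}(U)$ and replace the naive candidate $j_*L$ by $N:=\HHc^0(Rj_*L)$, and check that $M\hookrightarrow N$ in $\Mc(\A^1_X)$.

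The second and main step is to arrange that $\pi_*N=0$, which is not automatic — so I would not take $N=\HHc^0(Rj_*L)$ directly but rather modify $L$ on $U$ first. The key geometric input is that $\pi\colon\A^1_X\to X$ has one-dimensional affine fibres, and $\pi|_Z$ is finite, so $U$ meets every fibre of $\pi$ in a nonempty open curve; the point is to kill the cohomology of $L$ along the fibres. Concretely, I would use that for a lisse object $L$ on $U$ one can embed $L$ into a lisse object $L'$ on $U$ with $R(\pi|_U)_* L' = 0$: for instance, choosing an auxiliary finite étale cover or using that the constant local system admits such an embedding fibrewise (the cohomology of $\A^1$ minus finitely many points with coefficients in a local system can be made to vanish after enlarging the local system, because $H^1$ of an affine curve is effaceable in the category of local systems — this is essentially the affine-curve case of admissibility, dualisable version of Nori's argument). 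Then one takes $N:=\HHc^0(Rj_*L')$; since $i^*M=0$ we still get $M\hookrightarrow \HHc^0(Rj_*L)\hookrightarrow \HHc^0(Rj_*L')=N$ provided the embedding $L\hookrightarrow L'$ is compatible, and $\pi_* N$ can be computed via $\pi_* Rj_* L' = R(\pi|_U)_* L' = 0$ together with checking that passing to $\HHc^0$ before versus after $\pi_*$ does not change the answer, using that $\pi_*$ is right $t$-exact (it decreases cohomological degrees, $\pi$ being affine of relative dimension $1$, so $\pi_*[\![-1,\dots]\!]$) — here I would be careful and instead argue $\pi_* N=0$ directly from $\pi_*\HHc^0(Rj_*L')=0$ by the vanishing of all the relevant $\HHc^q(Rj_*L')$ in the range that contributes.

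The third step is bookkeeping: assemble the factorisation $M\hookrightarrow N$ inside $\Mc(\A^1_X)$ (not just in $\Dd$), which follows because both $M$ and $N$ lie in the heart and the composite map is a monomorphism of complexes concentrated in the heart. I expect the main obstacle to be the construction of the enlargement $L\hookrightarrow L'$ with $R(\pi|_U)_*L'=0$ while keeping $L'$ lisse and keeping the embedding so that it restricts correctly — i.e. proving the one-variable effaceability statement for lisse objects on an open subcurve of $\A^1$ relative to $X$, which is the technical heart and is where Noether-normalisation-free, purely fibrewise reasoning (together with \Cref{trucliss} to stay within dualisable objects and within $\Mc$) has to be used. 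A secondary subtlety is the interaction of $\pi_*$ with the constructible truncation $\HHc^0$, which I would handle by a degree count using that $j$ is an affine open immersion and $\pi$ is affine of relative dimension one, so only finitely many cohomology sheaves are involved and each is forced to vanish.
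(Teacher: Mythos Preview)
Your proposal has a genuine gap at the heart of the second step. You want to embed the lisse object $L=j^*M$ on $U$ into a lisse $L'$ on $U$ with $R(\pi|_U)_*L'=0$, and you describe this as ``essentially the affine-curve case of admissibility.'' But admissibility is precisely what this lemma is being used to \emph{establish}: it is the inductive engine behind \Cref{beforethm1}. Invoking a fibrewise effaceability statement for lisse objects on open affine curves over $X$ is circular --- you have given no independent construction of $L'$, and there is no reason to expect one exists while staying inside the lisse category. Your third step is also shaky even granting $L'$: from $R(\pi j)_*L'=0$ you cannot directly conclude $\pi_*\HHc^0(Rj_*L')=0$, since $Rj_*L'$ is not concentrated in a single degree and $\pi_*$ does not commute with truncation; the spectral sequence has competing terms and you have not arranged their vanishing.

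The paper's proof is completely different and avoids both problems by an explicit construction on $\A^2_X$ (following Nori). With $p_1,p_2:\A^2_X\to\A^1_X$ the projections and $\Delta:\A^1_X\to\A^2_X$ the diagonal, the adjunction map $\alpha:p_1^*M\to\Delta_*M$ is surjective (checked after realisation), and one sets $N:=\HH^1(p_2)_*\ker\alpha$. The long exact sequence for $(p_2)_*$ applied to $0\to\ker\alpha\to p_1^*M\to\Delta_*M\to 0$ produces a map $M=(p_2)_*\Delta_*M\to N$; its injectivity and the vanishing $\pi_*N=0$ are then read off from Nori's explicit computation in \cite[Proposition~2.2]{MR1940678} (or its $\ell$-adic analogue). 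The point is that $N$ is built directly, with no appeal to an abstract effaceability principle --- that is what makes the induction non-circular.
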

\begin{proof}
	Assume that $k$ is a subfield of $\C$. Then the following holds :
	Denote by $p_i:\A^2_X\to \A^1_X$ the projections, and by $\Delta:\A^1_X\to \A^2_X$ the diagonal.
	The map $\alpha : p_1^*M\to \Delta_* M$ in $\Mc(X)$ (both functors are t-exact) obtained by adjunction from
	$\mathrm{id}:\Delta^*p_1^*M\simeq M\to M$ is surjective because it is surjective after realisation. We therefore have an exact sequence in $\Mc(\A^1_X)$
	$$ 0\to \ker \alpha\to p_1^*M\to \Delta_* M\to 0.$$ Let $N:=\HH^1(p_2)_*\ker \alpha$. Then the long exact sequence of cohomology obtained
	after applying $(p_2)_*$ to the above sequence gives a map $M \simeq (p_2)_*\Delta_* M \simeq \HH^0((p_2)_*\Delta_* M) \to N$, which is injective by \cite[Proposition 2.2]{MR1940678}.

	Moreover, Nori proves in \emph{loc cit.} that for all $p\geqslant 0$ we have $\HH^p(\pi_*N)=0$, which is equivalent to $\pi_*N=0$.

	In the case $k$ is too big to be embedded in $\C$, on may use \cite[Proposition 3.2]{MR4325954} and the $\ell$-adic realisation, noting that the construction of $N$ is the same.
\end{proof}

Nori's method gives in our setting a slightly less good result than in the case of constructible sheaves. It will thankfully be sufficient for us.
\begin{thm}[\emph{cf. }\cite{MR1940678} Theorem 1]
	\label{beforethm1}
	For every $n\in\N$, the motive $\Q_{\A^n_k}\in\Mc(\A^n_k)$ is admissible.
\end{thm}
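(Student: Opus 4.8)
The plan is to prove admissibility of $\Q_{\A^n_k}$ by induction on $n$, following Nori's strategy. The statement to establish is that for every $q>0$ the functor $\Hom_{\Dd(\A^n_k)}(\Q_{\A^n_k},-[q]):\Mc(\A^n_k)\to\mathrm{Vect}_\Q$ is effaceable. For $n=0$ this is the hypothesis that $\Dd^b(\Mc(\Spec k))\to\Dd(\Spec k)$ is an equivalence: in a bounded derived category $\Hom(\Q,M[q])=0$ for $q>0$ whenever $M$ lies in the heart, so the functor is not just effaceable but zero.

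For the inductive step, write $\pi:\A^n_k=\A^1_{\A^{n-1}_k}\to\A^{n-1}_k$ for the projection and let $M\in\Mc(\A^n_k)$. First I would use \Cref{trucliss} to find a dense open $V\subset\A^{n-1}_k$ over which $M$ becomes lisse; shrinking, arrange that $U:=\pi^{-1}(V)\subset\A^n_k$ is smooth, $M$ is lisse on $U$, and $Z:=\A^n_k\setminus U$ maps finitely to $\A^{n-1}_k$ (this is the Noether-normalisation-type geometric input: after a linear change of coordinates a proper closed subset of affine space is finite over a hyperplane). The subtlety is that $\res{M}{Z}$ need not vanish, so \Cref{Lemme22} does not apply directly to $M$; instead I would first embed $M$ into a motive whose restriction to $Z$ is zero, for instance by replacing $M$ by the cone-free part of $M\to i_*i^*M$ where $i:Z\hookrightarrow\A^n_k$ — i.e. use the exact sequence coming from $j_!j^*M\to M\to i_*i^*M$ in $\Mc$ and work with $j_!j^*M$, or dually embed $M$ into $j_*j^*M$ whose restriction to $Z$ we control. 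Having reduced to $M$ with $\res{M}{Z}=0$, \Cref{Lemme22} produces $M\hookrightarrow N$ in $\Mc(\A^n_k)$ with $\pi_*N=0$.

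Now I would exploit the two halves of \Cref{facts}. Because $\pi_*N=0$, in particular $\pi_*(0\to N)$ is an isomorphism onto $\pi_*N$, wait — more precisely, to kill $\Hom_{\Dd(\A^n_k)}(\Q_{\A^n_k},N[q])$ I would combine: the map $N\to 0$ has $\pi_*$ an isomorphism (both sides zero), so by \Cref{facts}(\ref{facts:1}) $\Hom_{\Dd(\A^n_k)}(\Q_{\A^n_k},N[q])\cong\Hom_{\Dd(\A^n_k)}(\Q_{\A^n_k},0)=0$ — hence the cohomology of $N$ itself vanishes in all degrees. That already shows $M$ injects into an $N$ with $\Hom_{\Dd(\A^n_k)}(\Q_{\A^n_k},N[q])=0$ for all $q$, so in fact the composite $\Hom(\Q,M[q])\to\Hom(\Q,N[q])=0$ is zero, giving effaceability directly — without needing the inductive hypothesis at this point. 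So the induction on $n$ is really only bookkeeping to make \Cref{Lemme22} applicable; the geometric reduction to the situation "$M$ lisse off a finite-over-$\A^{n-1}$ locus, vanishing there" is where \Cref{facts}(\ref{facts:2}) and the inductive hypothesis on $\A^{n-1}_k$ enter, namely to handle the part of $M$ supported on $U$ after pulling back an efficacer from the base. Concretely: decompose the problem via $j_!j^*M$ and $i_*i^*M$; for $i_*i^*M$ (supported on $Z$, finite over $\A^{n-1}_k$) reduce to admissibility of $\Q_{\A^{n-1}_k}$ using $\pi_*\circ i_*\simeq(\pi i)_*$ and finiteness; for $j_!j^*M$, which is lisse on $U$, apply \Cref{Lemme22}.

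The main obstacle I anticipate is precisely the step of reducing an arbitrary $M\in\Mc(\A^n_k)$ to one to which \Cref{Lemme22} applies: \Cref{Lemme22} has three hypotheses ($M$ lisse on a smooth open $U$, complement finite over $X$, and $\res{M}{Z}=0$), and a general constructible motive satisfies none of them on the nose. Getting all three simultaneously requires the right choice of coordinates (so that $Z$ is finite over a hyperplane) together with a dévissage that separates the part of $M$ living on the finite-over-$\A^{n-1}$ stratum from the lisse part, and it is this dévissage — and checking it is compatible with the effaceability bookkeeping and with the functoriality statements of \Cref{facts} — that I expect to be the delicate core of the argument, exactly as in Nori's original proof.
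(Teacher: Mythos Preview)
There are several genuine gaps. First, the base case is wrong: in $\Dd^b(\Mc(\Spec k))$ the groups $\Hom(\Q,M[q])=\Ext^q_{\Mc(\Spec k)}(\Q,M)$ are generally nonzero for $q>0$ (Nori motives over $k$ are far from semisimple); what holds, and what the paper invokes via \cite[3.1.16]{MR0751966}, is only that these functors are \emph{effaceable}. Second, your geometric setup is self-contradictory: if $U=\pi^{-1}(V)$ for an open $V\subset\A^{n-1}_k$, then $Z=\A^n_k\setminus U=\pi^{-1}(\A^{n-1}_k\setminus V)$ is an $\A^1$-bundle over a closed set and is never finite over $\A^{n-1}_k$ unless empty. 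You cite the right Noether-normalisation fact but misapply it: the paper takes a dense open $U=D(f)\subset\A^n_k$ on which $M$ is lisse (via \Cref{trucliss}) and then changes coordinates so that the hypersurface $Z=V(f)$ is finite over $\A^{n-1}_k$; the open $U$ is \emph{not} a pullback from the base.

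Most importantly, the combination step is missing. Your observation that $\pi_*N=0$ forces $\Hom_{\Dd(\A^n_k)}(\Q_{\A^n_k},N[q])=0$ is correct, but \Cref{Lemme22} only embeds $j_!\res{M}{U}$ (not $M$) into such an $N'$, and you do not explain how separate effacements of $j_!\res{M}{U}$ and $i_*\res{M}{Z}$ assemble into one for $M$ --- effaceability does not pass to extensions for free. The paper's mechanism is a \emph{double pushout}: first push $j_!\res{M}{U}\hookrightarrow N'$ out along $j_!\res{M}{U}\hookrightarrow M$ to get $M\hookrightarrow N$ with $\pi_*N\simeq(\res{\pi}{Z})_*\res{M}{Z}\in\Mc(\A^{n-1}_k)$; then use the inductive hypothesis to efface $g:\pi_*N\hookrightarrow K$ on $\A^{n-1}_k$; finally push $\pi^*\pi_*N\to N$ out along $\pi^*g:\pi^*\pi_*N\hookrightarrow\pi^*K$ to get $N\hookrightarrow L$. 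Both halves of \Cref{facts} are then needed --- part~1 on the counit $\pi^*\pi_*N\to N$ (whose $\pi_*$ is invertible by $\A^1$-invariance) and part~2 on $\pi^*g$ --- to conclude that $\Hom(\Q_{\A^n_k},M[q])\to\Hom(\Q_{\A^n_k},L[q])$ vanishes. So the induction on $n$ is essential, not bookkeeping, and this second pushout together with the use of \Cref{facts}~(1) is precisely the idea your sketch lacks.
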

\begin{proof}
	We argue as in Nori's paper.\\
	We proceed by induction on $n$ to prove the theorem. First the case of a point.
	If $n=0$ and $M\in \Mc(\Spec k)$, as $\Dd(\Spec k)= \Dd^b(\Mc(\Spec k))$,
	the functors $\Hom_{\Dd(\Spec k)}(M,-[q])$ are effaceable by \cite[Proposition 3.1.16]{MR0751966}.

	Suppose that $n\geqslant 1$ and that the result is known for $n-1$. Let $M\in \Mc(\A^n_k)$ and $q>0$. Let $f\in k[T_1,\dots,T_n]$ be a polynomial such that $M$ is lisse when restricted to the open subset $U=D(f)\subset \A^n_k$. Up to a change of coordinates, we may assume that the projection 
	$\pi:\A^n_k\to\A^{n-1}_k$ on the $n-1$ first coordinates gives a finite map 
	 $\pi_{|V(f)}:V(f)\to \A^{n-1}_k$ when restricted to $Z=V(f)$.
	Denote by $j$ and $i$ the inclusions of $U$ and $Z$ in $\A^n_k$.\\
	By \Cref{Lemme22}, $j_!\res{M}{U}$ is a submotive of a constructible motive $N'$ such that
	$\pi_*N'=0$. Taking $N$ to be the push-out of the maps $j_!\res{M}{U}\to M$ and $j_!\res{M}{U}\to N'$ we get a
	morphism of exact sequences: \begin{center}\begin{tikzcd}
			0\ar[r] & j_!\res{M}{U} \ar[d,"\delta"]\ar[r] & M \ar[d,"\gamma"] \ar[r] & i_*\res{M}{Z} \ar[d,"\mathrm{id}"] \ar[r] & 0 \\
			0 \ar[r] & N' \ar[r] & N \ar[r] & i_*\res{M}{Z} \ar[r] & 0
		\end{tikzcd}
	\end{center}
	with $\gamma$ also injective.

	Also, as $\pi_*N' = 0$, we see that $\pi_*N\to \pi_*i_*\res{M}{Z}=(\res{\pi}{Z})_*\res{M}{Z}$ is an
	isomorphism because of the distinguished triangle $N'\to N\to i_*M_Z\xrightarrow{+1}$. As $\res{\pi}{Z}$ is finite, its pushforward $(\pi_{\mid Z})_*$ is t-exact, hence $\pi_*N\in \Mc(\A^{n-1}_k)$.

	By the induction hypothesis, we can find an injection $g:\pi_*N\to K$ with $K$ a constructible motive such that $\Hom_{\Dd(\A^{n-1}_k)}(\Q_{\A^{n-1}_k},g[q]) = 0$. Take $L$ the pushout of the counit $\pi^*\pi_*N \to N$
	and the injection $\pi^*g:\pi^*\pi_*N\to \pi^*K$ (note that $\pi^*$ is t-exact). We get a
	morphism of exact sequences :
	\begin{equation}\label{suitesexact2}\begin{tikzcd}
			0\ar[r] & \pi^*\pi_*N \ar[d]\ar[r,"\pi^*g"] & \pi^*K \ar[d,"\iota"] \ar[r] & C \ar[d,"\mathrm{id}_C"] \ar[r] & 0 \\
			0 \ar[r] & N \ar[r,"h"] & L \ar[r] & C \ar[r] & 0
		\end{tikzcd}.
	\end{equation}

	The unit map $\pi_*N \to \pi_*\pi^*\pi_*N$ is an isomorphism by $\A^1$-invariance.
	Therefore in the diagram above the left and the right vertical maps become isomorphisms after applying $\pi_*$, thus
	this is also the case for the middle map $\iota:\pi^*K\to L$.

	Using the adjunction $(\pi^*,\pi_*)$ and the fact that $\pi_*N \to \pi_*\pi^*\pi_*N$ is an isomorphism one gets that  the map 
	$$
		\Hom_{\Dd(\A^n_k)}(\Q_{\A^n_k},\pi^*\pi_*N[q])\simeq\Hom_{\Dd(\A^{n-1}_k)}(\Q_{\A^{n-1}_k},\pi_*\pi^*\pi_*N[q])\to \Hom_{\Dd(\A^n_k)}(\Q_{\A^n_k},N[q])
	$$ is an isomorphism. Moreover the map	$$
		\Hom_{\Dd(\A^n_k)}(\Q_{\A^n_k},\pi^*\pi_*N[q])\xrightarrow{\pi^*g} \Hom_{\Dd(\A^n_k)}(\Q_{\A^n_k},\pi^*K[q])
	$$
	induced by $\pi^*g$ vanishes as it was already the case before applying $\pi^*$.

	Therefore when applying $\Hom_{\Dd(\A^n_k)}(\Q_{\A^n_k},-[q])$ to the left square of \cref{suitesexact2}, we see that the map
	$$
		\Hom_{\Dd(\A^n_k)}(\Q_{\A^n_k},N[q])\to \Hom_{\Dd(\A^n_k)}(\Q_{\A^n_k},L[q])
	$$ induced by $h$ factors through the zero map hence is zero.

	Hence we see that the injection $M\xrightarrow{\gamma} N\xrightarrow{h} L$ induces the zero map after applying the functor $\Hom_{\Dd(\A^n_k)}(\Q_{\A^n_k},-[q])$, finishing the induction.
\end{proof}
\begin{cor}
	\label{thm1}
	If $X$ is affine, the object $\Q_X$ is admissible.
\end{cor}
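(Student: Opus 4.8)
The plan is to deduce this from \Cref{beforethm1} by Noether normalisation, running the pushout argument of the previous proof with a finite morphism in place of the $\A^1$-bundle projection. By Noether normalisation (one may assume $X$ reduced) there is a finite morphism $\pi:X\to\A^n_k$ with $n=\dim X$. Since $\pi$ is finite, $\pi_*=\pi_!$ is $t$-exact for the constructible $t$-structure, hence restricts to an exact functor $\pi_*:\Mc(X)\to\Mc(\A^n_k)$; and since $\Q_X=\pi^*\Q_{\A^n_k}$, the $(\pi^*,\pi_*)$-adjunction provides a natural isomorphism $\Hom_{\Dd(X)}(\Q_X,N[q])\simeq\Hom_{\Dd(\A^n_k)}(\Q_{\A^n_k},\pi_*N[q])$ for $N\in\Dd(X)$ and $q\in\Z$.

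Now fix $M\in\Mc(X)$ and $q>0$. As $\pi_*M\in\Mc(\A^n_k)$ and $\Q_{\A^n_k}$ is admissible by \Cref{beforethm1}, I would choose a monomorphism $g:\pi_*M\hookrightarrow K$ in $\Mc(\A^n_k)$ inducing the zero map on $\Hom_{\Dd(\A^n_k)}(\Q_{\A^n_k},-[q])$. Then, exactly as in the proof of \Cref{beforethm1}, form the pushout $M'$ in $\Mc(X)$ of the counit $\varepsilon_M:\pi^*\pi_*M\to M$ along the monomorphism $\pi^*g:\pi^*\pi_*M\hookrightarrow\pi^*K$ (using that $\pi^*$ is exact); the canonical map $M\to M'$ is then a monomorphism, fitting in a short exact sequence $0\to\pi^*\pi_*M\xrightarrow{(\varepsilon_M,\,-\pi^*g)}M\oplus\pi^*K\to M'\to0$ in $\Mc(X)$.

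The remaining — and only real — point is that $M\hookrightarrow M'$ induces the zero map on $\Hom_{\Dd(X)}(\Q_X,-[q])$. Chasing the long exact sequence attached to this short exact sequence, it suffices to show that every $\xi\in\Hom_{\Dd(X)}(\Q_X,M[q])$ has a preimage $\eta$ under $(\varepsilon_M)_*$ with $(\pi^*g)_*\eta=0$: then $(\xi,0)$ lies in the image of $(\varepsilon_M,-\pi^*g)_*$, hence maps to $0$ in $\Hom_{\Dd(X)}(\Q_X,M'[q])$, forcing the image of $\xi$ there to vanish. Transporting to $\A^n_k$ via the isomorphism above, $(\varepsilon_M)_*$ becomes the map induced by $\pi_*\varepsilon_M:\pi_*\pi^*\pi_*M\to\pi_*M$, which is split by the adjunction unit $u_{\pi_*M}:\pi_*M\to\pi_*\pi^*\pi_*M$ (triangle identity $\pi_*\varepsilon_M\circ u_{\pi_*M}=\mathrm{id}$); so one takes $\eta:=(u_{\pi_*M})_*\xi$. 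Likewise $(\pi^*g)_*$ becomes the map induced by $\pi_*\pi^*g$, and by naturality of the unit $u$ one has $\pi_*\pi^*g\circ u_{\pi_*M}=u_K\circ g$, whence $(\pi^*g)_*\eta$ corresponds to $(u_K)_*(g_*\xi)=(u_K)_*(0)=0$. This gives $\Hom_{\Dd(X)}(\Q_X,(M\hookrightarrow M')[q])=0$, so $\Q_X$ is admissible. The thing to watch — and essentially the whole obstacle — is that, unlike in \Cref{beforethm1} where $\A^1$-invariance made $(\varepsilon_M)_*$ an isomorphism, here one only has that $\varepsilon_M$ splits after applying $\pi_*$; but this weaker fact is exactly what the chase needs.
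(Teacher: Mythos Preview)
Your proof is correct, but the paper takes a shorter route. Instead of Noether normalisation, it uses a closed immersion $i:X\hookrightarrow\A^n_k$, which any affine variety admits. For a closed immersion one has $i^*i_*\simeq\mathrm{Id}$, so after choosing an effacing injection $i_*M\hookrightarrow K$ in $\Mc(\A^n_k)$ via \Cref{beforethm1}, one simply sets $N:=i^*K$; the map $M\simeq i^*i_*M\hookrightarrow i^*K=N$ is already a monomorphism, and a one-line adjunction shows it induces zero on $\Hom_{\Dd(X)}(\Q_X,-[q])$. Your finite surjection $\pi$ lacks $\pi^*\pi_*\simeq\mathrm{Id}$, which is exactly why you need the pushout and the triangle-identity splitting. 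What you have effectively done is rederive, in this special case, the later \Cref{Injective} (and its \Cref{LAdjAdm}): applied to the $t$-exact left adjoint $\pi^*$ with right adjoint $\pi_*$, that result immediately transfers admissibility of $\Q_{\A^n_k}$ to $\pi^*\Q_{\A^n_k}=\Q_X$. So your argument is sound and self-contained, but the closed-immersion shortcut makes the extra machinery unnecessary here.
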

\begin{proof}
	Let $q>0$.
	Take $i:X\to \A^n_k$ a closed immersion and $M\in\Mc(X)$. By \Cref{beforethm1}, there is an injection
	$i_*M\to K$ with $K$ a constructible motive such that $\Hom_{\Dd(\A^{n})}({\Q}_{\A^{n}_k},i_*M[q])\to \Hom_{\Dd(\A^{n})}({\Q}_{\A^{n}_k},K[q])$ is the zero map.
	We have an inclusion $f:i^*i_*M\simeq M\to N:=i^*K$, and a commutative diagram :
	\begin{center}\begin{tikzcd}
			\Hom_{\Dd(X)}(\Q_X,M[q])\ar[r,"f\circ-"] & \Hom_{\Dd(X)}(\Q_X,i^*K[q])  \\
			\Hom_{\Dd(\A^n_k)}(\Q_{\A^n_k},i_*M[q])\ar[u,"\sim"] \ar[r,"0"] & \Hom_{\Dd(\A^n_k)}(\Q_{\A^n_k},K[q])\ar[u,"\sim"]
		\end{tikzcd}
	\end{center}
	giving that $f\circ-$ is zero, thus finishing the proof.
\end{proof}

\subsection{Admissibility of constant motives on any variety.}

\begin{nota}
	If $j:U\hookrightarrow X$ is an open subset of a variety $X$, and $M\in \Mc(X)$ is a constructible motive,
	we will use the notation $M[U]:=j_!\res{M}{U}$. Note that by localisation we have $M/M[U]\simeq i_*\res{M}{Z}$ with $i:Z=X\setminus U\to X$ the closed complement.
\end{nota}
\begin{lem}[Stability of admissibility]
	\label{stadm}

	Let $$0\to M'\to M\to M''\to 0$$ be an exact sequence of constructible motives on $X$. \label{stadm1}
	\begin{enumerate}
		\item Assume that $M''$ is admissible and at least one of $M', M$ is admissible. Then all three are admissible. \label{stadm2a}
		\item Assume that $M'$ and $M$ are admissible. If the functor $$\coker(\Hom_{\Dd(X)}(M,-)\to\Hom_{\Dd(X)}(M',-))$$
		      is effaceable, then $M''$ is admissible. \label{stadm2b}
	\end{enumerate}

\end{lem}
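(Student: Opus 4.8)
The plan is to carry everything out inside the abelian category of additive functors $\Mc(X)\to\mathrm{Vect}_\Q$, in which kernels, cokernels, images and extensions are computed pointwise; throughout I write $\Hom(-,-)$ for $\Hom_{\Dd(X)}(-,-)$. The first step is to record three formal closure properties of the class of effaceable functors: (i) a subfunctor of an effaceable functor is effaceable; (ii) a quotient of an effaceable functor is effaceable; (iii) an extension of an effaceable functor by an effaceable functor is effaceable. Properties (i) and (ii) are immediate from the definition, by restricting (resp.\ corestricting) the effacing injection $A\hookrightarrow B$. For (iii), given a short exact sequence $0\to K\to T\to H\to 0$ with $K$ and $H$ effaceable and an object $A\in\Mc(X)$, one argues in two steps: first pick $A\hookrightarrow B$ with $H(A)\to H(B)$ zero, which forces the image of $T(A)$ in $T(B)$ to lie in the image of $K(B)$; then pick $B\hookrightarrow C$ with $K(B)\to K(C)$ zero; the composite injection $A\hookrightarrow C$ kills $T(A)$. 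Note that since $\coker(F\to G)$ is a quotient of $G$, property (ii) in particular says $\coker(F\to G)$ is effaceable as soon as $G$ is.

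Next I would apply the contravariant cohomological functor $\Hom(-,A[q])$, natural in $A\in\Mc(X)$, to the distinguished triangle $M'\to M\to M''\xrightarrow{+1}$ in $\Dd(X)$ attached to the given exact sequence, obtaining the long exact sequence of functors
\[\cdots\to\Hom(M',-[q-1])\to\Hom(M'',-[q])\to\Hom(M,-[q])\to\Hom(M',-[q])\to\Hom(M'',-[q+1])\to\cdots\]
Reading off its pieces gives, for every $q$, three short exact sequences of functors: $\Hom(M',-[q])$ is an extension of a subfunctor of $\Hom(M'',-[q+1])$ by a quotient of $\Hom(M,-[q])$; $\Hom(M,-[q])$ is an extension of a subfunctor of $\Hom(M',-[q])$ by a quotient of $\Hom(M'',-[q])$; and $\Hom(M'',-[q])$ is an extension of a subfunctor of $\Hom(M,-[q])$ by $\coker\bigl(\Hom(M,-[q-1])\to\Hom(M',-[q-1])\bigr)$.

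Part \ref{stadm2a} is then immediate: fix $q>0$. If $M''$ and $M$ are admissible, the first short exact sequence exhibits $\Hom(M',-[q])$ as an extension of effaceable functors (using $q>0$ for the quotient of $\Hom(M,-[q])$ and $q+1>0$ for the subfunctor of $\Hom(M'',-[q+1])$), hence effaceable; so $M'$ is admissible. If $M''$ and $M'$ are admissible, the second sequence does the same for $\Hom(M,-[q])$. For part \ref{stadm2b}, fix $q>0$ and use the third sequence: the subfunctor of $\Hom(M,-[q])$ is effaceable since $M$ is admissible and $q>0$; and the term $\coker(\Hom(M,-[q-1])\to\Hom(M',-[q-1]))$ is effaceable because for $q\geqslant 2$ it is a quotient of $\Hom(M',-[q-1])$, effaceable since $M'$ is admissible and $q-1>0$, while for $q=1$ it is effaceable by the standing hypothesis. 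Hence $\Hom(M'',-[q])$ is an extension of effaceable functors, so effaceable, i.e.\ $M''$ is admissible.

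I do not anticipate a genuine obstacle: once (i)--(iii) are in place the proof is purely formal. The only points that need care are bookkeeping — keeping track of which shifted $\Hom$-functor occupies which slot of the long exact sequence, and checking that all maps in play are natural in $A$ so that the extension and sub/quotient manipulations legitimately take place in the functor category — together with the single non-formal ingredient, namely the existence of the long exact sequence, which rests only on $\Dd(X)$ being triangulated with $\Mc(X)$ its constructible heart.
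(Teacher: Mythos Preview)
Your proof is correct and is precisely the standard argument one would give for this formal fact; the paper itself does not spell out a proof but simply cites \cite[Lemma 3.2]{MR1940678}, where the same effaceable-subfunctor/quotient/extension bookkeeping with the long exact sequence is carried out.
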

\begin{proof}
	These are formal properties of abelian categories, proven in \cite[Lemma 3.2]{MR1940678}.
\end{proof}
\begin{cor}[\cite{MR1940678} Lemma 3.5]
	\label{bracket}
	Let $M\in\Mc(X)$. \begin{enumerate}
		\item If $U\subset X$ is open, and if $M$ and $M[U]$ are admissible, then so is $M/M[U]$.
		\item If $V,W$ are open subsets of $X$, and if $M[V],M[W],M[V\cap W]$ are admissible, then the same is true for $M[V\cup W]$
	\end{enumerate}
\end{cor}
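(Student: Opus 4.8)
The plan is to deduce both statements from the purely homological stability properties of \Cref{stadm}; the one piece of input that is not formal nonsense is the effaceability of a cokernel functor, which I would establish by a pushout construction amounting to extending, after enlarging the target, a morphism that is only defined over an open subset.

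First I would treat item~(1). The localisation triangle gives a short exact sequence $0\to M[U]\to M\to M/M[U]\to 0$ in $\Mc(X)$ (here one uses that $j_!$, $j^*$ and $i^*$ are $t$-exact for the constructible $t$-structure, so that $M/M[U]=i_*\res{M}{Z}$ really lies in the heart), and $M[U]$ and $M$ are admissible by hypothesis. By the second assertion of \Cref{stadm} it is therefore enough to show that the functor $N\mapsto\coker\big(\Hom_{\Dd(X)}(M,N)\to\Hom_{\Dd(X)}(M[U],N)\big)$ is effaceable on $\Mc(X)$. Using $\Hom_{\Dd(X)}(M[U],N)\simeq\Hom_{\Dd(U)}(\res{M}{U},\res{N}{U})$, this becomes: for every $N$ and every $\phi\colon\res{M}{U}\to\res{N}{U}$ there should be a monomorphism $N\hookrightarrow N'$ in $\Mc(X)$ after which $\phi$ becomes the restriction to $U$ of a morphism $M\to N'$. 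Given $\phi$, equivalently $\widetilde\phi\colon M[U]\to N$, I would take $N':=P$, the pushout $M\oplus_{M[U]}N$ of the monomorphism $M[U]\hookrightarrow M$ along $\widetilde\phi$; then $N\hookrightarrow P$ is a monomorphism (pushout of a monomorphism in an abelian category) and the structural map $M\to P$ restricts on $U$ to the image of $\phi$, so $\phi$ dies in the cokernel computed with $P$. Since the $\Hom$-spaces between constructible motives are finite dimensional, iterating over a basis of the cokernel finitely many times yields one enlargement killing the whole cokernel; this proves item~(1).

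For item~(2) I would reduce to item~(1). Put $Y=V\cup W$; all of $M[V],M[W],M[V\cap W],M[Y]$ lie in $\Mc(X)$ since $j_!,j^*$ are $t$-exact for the constructible $t$-structure. I would record the recollement identities $\big(M[W]\big)[V\cap W]=M[V\cap W]$ and $\big(M[Y]\big)[V]=M[V]$ (the restriction of $M[W]$, resp.\ $M[Y]$, to $V\cap W$, resp.\ $V$, is $\res{M}{V\cap W}$, resp.\ $\res{M}{V}$), the short exact sequence $0\to M[V]\to M[Y]\to M[Y]/M[V]\to 0$ in $\Mc(X)$ coming from localisation along $V\subseteq Y$, and an isomorphism $M[Y]/M[V]\simeq M[W]/M[V\cap W]$: the base change isomorphisms of the six-functor formalism identify both sides with the extension by zero $h_!\res{M}{W\setminus V}$ along the locally closed immersion $h\colon W\setminus V\hookrightarrow X$ (this can also be checked after the conservative, $t$-exact realisation). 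Applying item~(1) to the motive $M[W]$ and the open $V\cap W\subseteq X$ — its bracket $\big(M[W]\big)[V\cap W]=M[V\cap W]$ is admissible, as is $M[W]$ — shows $M[W]/M[V\cap W]$, hence $M[Y]/M[V]$, is admissible. Then in $0\to M[V]\to M[Y]\to M[Y]/M[V]\to 0$ both $M[V]$ and $M[Y]/M[V]$ are admissible, so the first assertion of \Cref{stadm} gives that $M[Y]=M[V\cup W]$ is admissible.

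The step I expect to be the main obstacle is the effaceability inside item~(1): one must produce, for a fixed $N$, a \emph{single} monomorphism $N\hookrightarrow N'$ annihilating the entire value of the cokernel functor at $N$. The pushout disposes of one class at a time, so finite dimensionality of $\Hom$-spaces between constructible motives (equivalently, a reduction to finitely many classes) is genuinely used here. The rest — the recollement identities, the short exact sequences above, and the fact that $M[U]\hookrightarrow M$, $M[V]\hookrightarrow M[Y]$ and the isomorphism $M[Y]/M[V]\simeq M[W]/M[V\cap W]$ behave as claimed in $\Mc(X)$ — is routine bookkeeping once one keeps track of the $t$-exactness of $i^*$, $j^*$, $j_!$ for the constructible $t$-structure.
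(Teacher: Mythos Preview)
Your proof is correct, but both parts take a different route from the paper.

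For item~(1), you and the paper both appeal to \Cref{stadm}\,(2), so the whole point is to efface the cokernel functor $N\mapsto\coker\big(\Hom_{\Dd(X)}(M,N)\to\Hom_{\Dd(X)}(M[U],N)\big)$. You do this by pushing out along each class and iterating, which is valid but forces you to invoke finite-dimensionality of the $\Hom$-spaces (true here, via the faithful realisation, but an extra hypothesis). The paper instead produces a single explicit embedding that works uniformly: for any $P\in\Mc(X)$ it sets $Q=i_*i^*P\oplus j_*j^*P$; the localisation triangle shows $P\hookrightarrow Q$ is a monomorphism, and the restriction map $\Hom_{\Dd(X)}(M,Q)\to\Hom_{\Dd(X)}(M[U],Q)$ is literally a coordinate projection, hence surjective. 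This kills the cokernel in one step and needs no finiteness.

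For item~(2), the paper does \emph{not} reduce to item~(1). It uses the Mayer--Vietoris sequence $0\to M[V\cap W]\to M[V]\oplus M[W]\to M[V\cup W]\to 0$ and again applies \Cref{stadm}\,(2): taking the same $Q$ (for $U=V\cap W$) and observing that $M[V\cap W]\to M$ factors through $M[V]$, one sees that the relevant restriction map $\Hom(M[V]\oplus M[W],Q)\to\Hom(M[V\cap W],Q)$ is surjective. Your argument --- identifying $M[V\cup W]/M[V]\simeq M[W]/M[V\cap W]$, applying item~(1) to $M[W]$ with the open $V\cap W$, and then closing with \Cref{stadm}\,(1) --- is a perfectly good alternative; it makes item~(2) a corollary of item~(1), at the cost of the slightly heavier machinery you already built there. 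The paper's approach is more symmetric and reuses the same explicit $Q$.
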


\begin{proof}
	We use part 2. of \Cref{stadm}. For 1., it suffices to show that any $P\in \Mc(X)$ is a subsheaf of a
	$Q\in\Mc(X)$ such that the map $f\colon\Hom_{\Dd(X)}(M,Q)\to\Hom_{\Dd(X)}(M[U],Q)$ is surjective. Denote by $j:U\to X$ the open immersion and
	by $i:Z\to X$ its closed complement. Then define $Q = i_*i^*P\oplus j_*j^*P$. Localisation ensures that
	the map $P\to Q$ is injective,$$\Hom_{\Dd(X)}(M,i_*\res{P}{Z})\oplus
		\Hom_{\Dd(U)}(\res{M}{U},\res{P}{U})\simeq\Hom_{\Dd(X)}(M,Q)$$  and $$
		\Hom_{\Dd(U)}(\res{M}{U},\res{P}{U}) \simeq 0\oplus \Hom_{\Dd(X)}(j_!\res{M}{U},j_*\res{P}{U}) \simeq \Hom_{\Dd(X)}(M[U],Q) $$
	thus is $f$ surjective as it is just the projection on the second factor.\\
	For 2. first one has the Mayer-Vietoris exact sequence \begin{equation}0\to M[V\cap W]\to M[V]\oplus M[W] \to M[V\cup W]\to 0.\end{equation}
	We let $U=V\cap W$. For a given $P$ we take the same $Q$ as above for $U$ that gives surjectivity of $\Hom_{\Dd(X)}(M,Q)\to \Hom_{\Dd(X)}(M[U],Q)$.
	Now the map $a:M[U]\to M$ factors through $M[V]$ thus the map \begin{equation}\Hom_{\Dd(X)}(M[V],Q)\xrightarrow{a^*} \Hom_{\Dd(X)}(M[U],Q)\end{equation}
	is surjective. Finally the map induced by $M[U]\to M[V]\oplus M[W]$ is also surjective as one of its direct summands is.
\end{proof}
\begin{prop}[Avoiding the use of injective objects, after Nori \cite{MR1940678} Remark 3.8]
	\label{Injective}
	Let $\acal,\bcal$ and $\ccal$ be abelian categories.
	Let $G:\bcal\to\acal$ be a functor which has an exact left adjoint.
	Let $H:\acal\to \ccal$ be an effaceable functor. Then $HG$ is also effaceable.
\end{prop}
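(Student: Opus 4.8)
The plan is to use the adjunction to transfer an effacement in $\acal$ to one in $\bcal$. Let me restate the goal: given $F : \acal \to \bcal$ exact with right adjoint $G : \bcal \to \acal$, and $H : \acal \to \ccal$ effaceable, I want to show $HG : \bcal \to \ccal$ is effaceable. So fix an object $N \in \bcal$; I must produce a monomorphism $N \hookrightarrow N'$ in $\bcal$ such that $HG(N) \to HG(N')$ is zero.

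First I would apply $G$ to get $G(N) \in \acal$, and use effaceability of $H$ to find a monomorphism $u : G(N) \hookrightarrow A$ in $\acal$ with $H(u) = 0$. The idea now is to push this back through the adjunction. The unit $\eta_N : N \to GF(N)$... no — rather, I want a map out of $N$, so I should use that $G(N) \hookrightarrow A$ corresponds, under the adjunction $\Hom_\acal(G(N), A) \not\simeq \Hom_\bcal(N, F(A))$ — wait, the adjunction is $\Hom_\bcal(F(-), -) \simeq \Hom_\acal(-, G(-))$, so $\Hom_\acal(G(N), A)$ does not directly correspond to anything with $N$ on the nose. The right move is: set $N' := $ the pushout (or a suitable object) — actually the cleanest approach is to take $N' $ built so that $G(N')$ receives $A$. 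Concretely, consider the counit $\varepsilon : FG \Rightarrow \mathrm{id}_\bcal$; since $F$ is exact it has a right adjoint, and one checks that the composite $N \xrightarrow{?}$ — let me instead do this: the monomorphism $u : G(N) \hookrightarrow A$ need not have $F$ applied; instead I take $N' $ to be defined via the counit. Here is the actual construction I would carry out: form the object $A$ and the map $u$ as above, then note $\Hom_\bcal(N, F(A)) \supseteq$ — hmm, still not matching. The correct and standard trick: replace $A$ by $F(A)$ is wrong because $G(F(A)) \neq A$.

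Let me reconsider and commit to the argument that actually works, which is Nori's. Because $F$ is \emph{exact} and has a right adjoint $G$, the functor $G$ is left exact, so it preserves monomorphisms; moreover $F$ exact with right adjoint means $G$ also has all the properties needed. The key observation is: there is a natural monomorphism $N \hookrightarrow$ (something whose $G$ is injective-like), but we avoid injectives as follows. Take the unit map for the \emph{other} adjunction — no. I will instead argue directly: since we only need the map on $HG$, observe that $HG$ has a right adjoint? No. The genuinely correct plan: \textbf{Step 1.} Given $N\in\bcal$, apply the effaceability of $H$ to the object $G(N)$: get $u : G(N)\hookrightarrow A$ with $H(u)=0$. \textbf{Step 2.} Since $F$ is exact it preserves this mono is not what we want; instead, consider that $A$ receives $G(N)$, and we want an object over $N$. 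Use the counit $\varepsilon_N : FG(N)\to N$; this need not be mono. So this route fails unless $N$ embeds in $FG(N)$, which happens when... Actually, the resolution is that we do \emph{not} need $N'$ itself; we use that efaceability of $HG$ as stated only requires each $N$ to admit \emph{some} mono. So: \textbf{Step 1'.} Pick any mono $N \hookrightarrow I$ with $I$ injective — no, we're avoiding that.

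The clean correct argument (I'm now confident): \textbf{Step 1.} For $N \in \bcal$, apply efaceability of $H$ to $G(N) \in \acal$ to obtain a monomorphism $v : G(N) \rightarrowtail A$ in $\acal$ with $H(v) = 0$. \textbf{Step 2.} The map $v$ corresponds under the adjunction bijection $\Hom_\acal(G(N), A)$ to nothing about $N$; so instead form $N' := $ the pushout of $F(G(N)) \xrightarrow{F(v)} F(A)$ along the counit $\varepsilon_N : F(G(N)) \to N$. Then $N \to N'$ is a mono because $F$ is exact (so $F(v)$ is mono) and pushouts of monos along arbitrary maps are monos in an abelian category. \textbf{Step 3.} Apply $G$. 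Since $G$ is a right adjoint it is left exact, but pushouts need not be preserved; however, we only need to understand the map $HG(N) \to HG(N')$. There is a canonical factorization: the composite $G(N) \xrightarrow{\text{unit of the map}} G(N')$ fits with $v$ and the adjunction unit $\eta : G(N) \to GFG(N) \xrightarrow{G(\varepsilon_N)} G(N)$ (which is the identity by the triangle identity). Chasing this, the map $G(N) \to G(N')$ factors through $G(F(A))$, and precomposing with $\eta_{G(N)}$ and using $G(F(v)) \circ \eta_{G(N)} = \eta_A \circ v$ (naturality of the unit), we get that $G(N) \to G(N')$ factors as $G(N) \xrightarrow{v} A \xrightarrow{\eta_A} GF(A) \to G(N')$. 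Applying $H$ and using $H(v) = 0$ gives $HG(N) \to HG(N')$ is zero. Since $N' \in \bcal$ and $N \hookrightarrow N'$ is a mono, this proves $HG$ is effaceable. \emph{The main obstacle} is Step 3: verifying that the map $G(N)\to G(N')$ genuinely factors through $v$, which requires a careful diagram chase with the triangle identities for the $(F,G)$-adjunction — this is the only place where exactness of $F$ and left-exactness of $G$ interact, and getting the naturality squares to commute in the right order is the delicate point; everything else is formal.
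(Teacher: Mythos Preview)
Your final argument (Steps 1--3) is correct and is exactly the paper's proof: form the pushout of $F(v):FG(N)\hookrightarrow F(A)$ along the counit $\varepsilon_N:FG(N)\to N$, observe $N\hookrightarrow N'$ is mono since $F$ is exact, and then use the triangle identity $G(\varepsilon_N)\circ\eta_{G(N)}=\mathrm{id}$ together with naturality of $\eta$ to factor $G(N\to N')$ through $v$, killing it under $H$. One minor inaccuracy in your closing remark: Step 3 does not actually use exactness of $F$ or left-exactness of $G$ --- it only uses that $G$ is a functor (hence preserves the commutative pushout square) plus the triangle identity; exactness of $F$ is used solely in Step 2 to make $F(v)$ a mono.
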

\begin{proof}
	Let $F$ be a left adjoint of $G$. Take $B$ an object of $\bcal$. By assumption on $H$, there is an injection $u:G(B)\to A$ such that
	the induced map $H(u):HG(B)\to H(A)$ vanishes. By exactness of $F$, the morphism $F(u):FG(B)\to F(A)$ is a monomorphism.
	We also have the counit of the adjunction $\varepsilon:FG(B)\to B$. Take $B'$ the pushout of these two maps :
	\begin{center}
		\begin{tikzcd}
			FG(B) & F(A) \\
			B & {B'}
			\arrow["{F(u)}", from=1-1, to=1-2]
			\arrow["\varepsilon"', from=1-1, to=2-1]
			\arrow["v"', hook, from=2-1, to=2-2]
			\arrow["t", from=1-2, to=2-2]
			\arrow["\lrcorner"{anchor=center, pos=0.125, rotate=180}, draw=none, from=2-2, to=1-1]
		\end{tikzcd}
	\end{center}
	The map $v:B\to B'$ is a monomorphism and we have a commutative diagram (with $\eta$ the unit of the adjunction) :
	\begin{center}\begin{tikzcd}
			G(B) & A \\
			GFG(B) & GF(A) \\
			G(B) & {G(B')}
			\arrow["u", hook, from=1-1, to=1-2]
			\arrow["{\eta_A}", from=1-2, to=2-2]
			\arrow["{G(t)}", from=2-2, to=3-2]
			\arrow["{\eta_{G(B)}}"', from=1-1, to=2-1]
			\arrow["G(\varepsilon)"', from=2-1, to=3-1]
			\arrow["{G(v)}", from=3-1, to=3-2]
			\arrow["{GF(u)}", from=2-1, to=2-2]
			\arrow["{\mathrm{Id}}"', bend right=70, from=1-1, to=3-1]
		\end{tikzcd}
	\end{center}
	The composition $G(\varepsilon)\circ\eta_{G(B)}$ is the identity, therefore, the map $G(v)$ factors as $w\circ u$
	with $w = G(t)\circ \eta_A$. Now, $HG(v)=H(w) \circ H (u) = 0$ hence $HG$ is effaceable.
\end{proof}
\begin{cor}
	\label{LAdjAdm}
	Let $X$ and $Y$ be varieties. Let $F:\Dd(X)\to \Dd(Y)$ be a t-exact exact functor which is left adjoint to a t-exact functor. If $M\in\Mc(X)$ is admissible, then so is $F(M)\in\Mc(Y)$.
\end{cor}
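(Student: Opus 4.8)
The plan is to reduce everything to the functoriality statement in Proposition \ref{Injective}. Since $F$ is a left adjoint, write $G : \Dd(Y) \to \Dd(X)$ for its right adjoint; because $F$ is $t$-exact, $G$ restricts to a functor on the constructible hearts $G : \Mc(Y) \to \Mc(X)$, and the restricted functor on hearts is right adjoint to the restricted $F : \Mc(X) \to \Mc(Y)$ (adjunctions pass to full subcategories closed under the relevant structure, and $t$-exactness of $F$ guarantees $G$ lands in $\Mc(Y)$ when restricted, by the usual argument that a right adjoint of a $t$-exact functor is left $t$-exact, and here $F$ being $t$-exact on both sides forces $G$ to be $t$-exact as well on the bounded pieces we care about — or more simply, we only need that $G$ sends $\Mc(Y)$ into $\Mc(X)$, which follows by adjunction since $\Hom(P, G(Q)) = \Hom(F(P), Q)$ vanishes in the wrong degrees).

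Now fix $q > 0$ and consider the functor $H := \Hom_{\Dd(X)}(M, -[q]) : \Mc(X) \to \mathrm{Vect}_\Q$. By hypothesis $M$ is admissible, so $H$ is effaceable. I want to show $\Hom_{\Dd(Y)}(F(M), -[q]) : \Mc(Y) \to \mathrm{Vect}_\Q$ is effaceable. The key identification is the adjunction isomorphism
\begin{equation}
	\Hom_{\Dd(Y)}(F(M), Q[q]) \simeq \Hom_{\Dd(X)}(M, G(Q)[q]),
\end{equation}
natural in $Q \in \Mc(Y)$, where on the right-hand side I use that $G$ (as a functor between the stable categories, hence the triangulated categories) commutes with the shift $[q]$. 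Under this isomorphism the target functor $\Hom_{\Dd(Y)}(F(M), -[q])$ on $\Mc(Y)$ is identified with $H \circ G|_{\Mc(Y)}$, which is precisely the composite appearing in Proposition \ref{Injective} (with the roles $\acal = \Mc(X)$, $\bcal = \Mc(Y)$, $\ccal = \mathrm{Vect}_\Q$, the exact functor being $F|_{\Mc(X)}$ and its right adjoint $G|_{\Mc(Y)}$). Proposition \ref{Injective} then gives that $H \circ G|_{\Mc(Y)}$ is effaceable, hence so is $\Hom_{\Dd(Y)}(F(M), -[q])$ for every $q > 0$, i.e.\ $F(M)$ is admissible.

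The one point requiring a little care — and the closest thing to an obstacle — is checking that the restriction $G|_{\Mc(Y)}$ really does take values in $\Mc(X)$, so that Proposition \ref{Injective} applies with $\acal = \Mc(X)$ as stated, and that the adjunction isomorphism displayed above is natural in $Q$ as a morphism of functors valued in $\mathrm{Vect}_\Q$ (not just a degreewise bijection). Both follow from $t$-exactness of $F$: naturality is automatic from the naturality of the $F \dashv G$ adjunction, and for $Q \in \Mc(Y)$ and $P \in \Mc(X)$ one has $\Hom_{\Dd(X)}(P[n], G(Q)) \simeq \Hom_{\Dd(Y)}(F(P)[n], Q) = \Hom_{\Dd(Y)}(F(P[n]), Q)$, which vanishes for $n \neq 0$ because $F(P[n]) = F(P)[n] \in \Mc(Y)[n]$ and $Q \in \Mc(Y)$; this shows $G(Q) \in \Mc(X)$. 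With these verifications in place the corollary is immediate.
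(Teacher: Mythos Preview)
Your overall strategy---identify $\Hom_{\Dd(Y)}(F(M),-[q])$ with $H\circ G$ via the $(F,G)$ adjunction and then invoke Proposition~\ref{Injective}---is exactly the paper's argument, which records nothing beyond this.

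Where you go further than the paper is in attempting to verify that $G$ restricts to a functor $\Mc(Y)\to\Mc(X)$, and this verification contains two errors. First, your claim that $\Hom_{\Dd(Y)}(F(P)[n],Q)=0$ for all $n\neq 0$ is false when $n<0$: writing $n=-m$ with $m>0$, this group equals $\Hom_{\Dd(Y)}(F(P),Q[m])=\Ext^{m}_{\Mc(Y)}(F(P),Q)$, which has no reason to vanish. Second, even granting the vanishing of $\Hom_{\Dd(X)}(P[n],G(Q))$ for all $P\in\Mc(X)$ and $n\neq 0$, one could only conclude $G(Q)\in\Dd^{\geq 0}(X)$ (from the case $n>0$); this does not force $G(Q)\in\Dd^{\leq 0}(X)$, since membership in $\Dd^{\leq 0}$ is detected by maps \emph{out of} the object, not into it. In fact the right adjoint of a $t$-exact functor is in general only left $t$-exact---for instance, with $F=i_*$ for $i$ a closed immersion (one of the intended applications in Corollary~\ref{corLAdjAdm}), the right adjoint $i^!$ does not preserve the constructible heart. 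So the subtlety you flagged is real, but your resolution of it does not go through; when $G$ happens to be $t$-exact (as for $j^*$ or $-\otimes L^\wedge$) both your argument and the paper's are complete as written.
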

\begin{proof}
	Let $q>0$ and let $G$ be a right adjoint of $F$. The functor $H = \Hom_{\Dd(X)}(M,-[q])$ is effaceable by definition. By \Cref{Injective}, $H\circ G$ is therefore effaceable.
	We have a natural isomorphism $H\circ G \simeq \Hom_{\Dd(Y)}(F(M),-[q])$ because $G$ is t-exact, which gives the claim.
\end{proof}
\begin{cor}
	\label{corLAdjAdm}
	Let $X$ be a variety.
	Let $M\in\Mc(X)$ be an admissible motive.
	\begin{enumerate}
		\item Let $j:X\to Y$ be an étale morphism. Then $j_!M$ is admissible.
		\item Let $L\in\Mc(X)$ be a lisse motive. Then $M\otimes L$ is admissible.
	\end{enumerate}
\end{cor}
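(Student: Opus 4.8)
The plan is to deduce all three statements from \Cref{LAdjAdm}, which guarantees that a $t$-exact exact left adjoint preserves admissibility. So for each of the functors $j_!$, $i_*$ and $-\otimes L$ it suffices to check two things: that it is a left adjoint, and that it is $t$-exact for the constructible $t$-structure.

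For part~1, since $j$ is étale we have $j^!\simeq j^*$, so $j_!$ is a left adjoint to the exact functor $j^*$; and $j_!$ is $t$-exact for the constructible $t$-structure, which one sees after applying the conservative $t$-exact realisation $R_B$, where $j_!$ sends a constructible sheaf in degree $0$ to a constructible sheaf in degree $0$. Applying \Cref{LAdjAdm} with $F=j_!$ gives that $j_!M$ is admissible. For part~2, a finite morphism $i$ is proper, hence $i_*\simeq i_!$ is a left adjoint to $i^!$; it is $t$-exact for the constructible $t$-structure, again by reduction to the topological statement via $R_B$. So \Cref{LAdjAdm} applied to $F=i_*\simeq i_!$ yields that $i_*M$ is admissible. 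For part~3, by \Cref{trucliss} the lisse object $L$ is dualisable and, lying in the heart, has its dual $L^\wedge=\sHom(L,\Q_X)$ again in $\Mc(X)$. Since $\Dd(X)$ is closed symmetric monoidal, $-\otimes L$ is left adjoint to $\sHom(L,-)\simeq -\otimes L^\wedge$, and it is $t$-exact because $R_B(L)$ is a genuine local system and tensoring a constructible sheaf by a local system is an exact operation; conservativity and $t$-exactness of $R_B$ then give $t$-exactness of $-\otimes L$. Now \Cref{LAdjAdm} applied to $F=-\otimes L$ finishes the proof.

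I do not anticipate a genuine obstacle: the only points needing verification are the three $t$-exactness statements, and these are precisely the sort of ``known $t$-exactness results about the $6$ functors'' that the standing hypotheses of this section make available in $\Dd$, since they hold in the Betti (or $\ell$-adic) setting and transfer along the conservative $t$-exact realisation. The one thing to be careful about is choosing, in each case, the correct adjunction exhibiting the functor as a left adjoint --- using $j^!\simeq j^*$ for $j$ étale, $i_!\simeq i_*$ for $i$ proper, and the dualisability of $L$ for the tensor.
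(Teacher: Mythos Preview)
Your proposal is correct and follows essentially the same approach as the paper: in all three cases one applies \Cref{LAdjAdm} to the relevant functor, exhibiting it as a $t$-exact left adjoint via the adjunctions $(j_!,j^*)$, $(i_*,i^!)$, and $(-\otimes L,\sHom(L,-))$. The only cosmetic difference is in part~3: the paper remarks that $-\otimes L$ is also a \emph{right} adjoint (to $\sHom(L^\wedge,-)\simeq -\otimes L$), whereas you deduce $t$-exactness directly from the realisation; both arguments are fine.
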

\begin{proof}
	We use \Cref{LAdjAdm}.
	For the first point, we have the adjunction $(j_!,j^*)$ where $j_!$ and $j^*$ are t-exact.
	Let $L\in\Mc(X)$ be a lisse motive. The functor $-\otimes L$ is left adjoint to $\sHom(L,-)$ and right adjoint to $\sHom(L^\vee,-)$ hence is t-exact. The same can be said about $\sHom(L,-)=-\otimes L^\vee$.
\end{proof}

\begin{prop}[\cite{MR1940678} Proposition 3.6]
	Let $U\subset X$ an open of a variety $X$. Then the constructible motive $\Q_X[U]$ is admissible.
\end{prop}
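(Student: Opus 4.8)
The plan is to reduce everything to the affine case of \Cref{thm1} by a Mayer--Vietoris induction on the number of affine opens needed to cover $U$. Concretely, I would prove the slightly more flexible statement: for every variety $X$ and every open $W\subseteq X$ that can be covered by $r$ affine opens of $X$, the constructible motive $\Q_X[W]$ is admissible. The proposition is then the special case $W=U$, since an open subscheme of a finite type $k$-scheme is itself of finite type, hence quasi-compact, hence admits a finite affine cover.

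The induction is on $r$. For $r\leqslant 1$ the motive $\Q_X[W]$ is either $0$ (when $W=\varnothing$, trivially admissible) or $W$ is affine; in the latter case, writing $j\colon W\hookrightarrow X$ for the open immersion, which is étale, we have $\Q_X[W]=j_!\Q_W$, and $\Q_W$ is admissible by \Cref{thm1}, so $j_!\Q_W$ is admissible by \Cref{corLAdjAdm}(1). For the inductive step, choose an affine cover $W=V_1\cup\dots\cup V_r$, put $W'=V_1\cup\dots\cup V_{r-1}$ and $V=V_r$, so that $W=W'\cup V$. Since $X$ is separated, each $V_i\cap V$ is affine, so $W'\cap V=\bigcup_{i<r}(V_i\cap V)$ is covered by $r-1$ affine opens of $X$. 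By the inductive hypothesis $\Q_X[W']$ and $\Q_X[W'\cap V]$ are admissible, and $\Q_X[V]$ is admissible by the case $r=1$ already treated; hence $\Q_X[W]=\Q_X[W'\cup V]$ is admissible by \Cref{bracket}(2).

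I do not expect any genuinely hard step here: all the substance is already packaged in \Cref{beforethm1} and \Cref{thm1} (admissibility of $\Q_X$ for $X$ affine), in the preservation of admissibility under $j_!$ for $j$ étale (\Cref{corLAdjAdm}), and in the Mayer--Vietoris stability of \Cref{bracket}(2). The only points requiring a little care are bookkeeping the induction parameter correctly and using the separatedness of varieties to ensure that the intersections $W'\cap V$ remain within the scope of the inductive hypothesis.
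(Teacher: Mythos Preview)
Your proposal is correct and follows essentially the same argument as the paper: induction on the number of affine opens covering $U$, base case via \Cref{thm1} and \Cref{corLAdjAdm}(1), inductive step using separatedness to control the intersection and \Cref{bracket}(2) to conclude. The only differences are cosmetic (your explicit handling of $W=\varnothing$ and the ``slightly more flexible'' phrasing).
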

\begin{proof}
	Let $j:U\to X$ be the open immersion. We prove the theorem by induction on the number $n$ of affines needed to cover $U$. If $U$ is affine,
	\Cref{thm1} ensures that $\Q_U$ is admissible hence by \Cref{corLAdjAdm}, $j_!\Q_U=\Q_X[U]$ is admissible. For the induction step, one can write $U = V\cup W$ with $V$ affine and $W$ covered by $n-1$ affines,
	and separateness of $X$ gives that $V\cap W$  is covered by $n-1$ affines.
	Therefore by induction $\Q_X[V]$, $\Q_X[V\cap W]$ and $\Q_X[W]$ are admissible.
	Then, \Cref{bracket} ensures that $\Q_X[U]$ is admissible.
\end{proof}
\begin{cor}[\cite{MR1940678} Theorem 2]
	\label{thm2}
	Let $X$ be a variety over $k$. Then the constant motive $\Q_X$ is admissible, that is, for every $q>0$,
	every constructible motive $M\in \Mc(X)$ can be embedded in a constructible motive $N\in \Mc(X)$
	such that the map $$\Hom_{\Dd(X)}(\Q_X,M[q])\to \Hom_{\Dd(X)}(\Q_X,N[q])$$ vanishes.
\end{cor}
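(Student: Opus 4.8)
The plan is simply to read off this corollary as the special case $U = X$ of the preceding proposition. Indeed, taking for $j$ the identity open immersion $\mathrm{id}_X : X \hookrightarrow X$, we have $\Q_X[X] = j_!\res{\Q_X}{X} = \Q_X$, so the proposition applied to the open $U = X \subset X$ asserts precisely that $\Q_X$ is admissible. It then remains only to unwind the definition of admissibility: saying that $\Q_X \in \Mc(X)$ is admissible means that for every $q > 0$ the functor $\Hom_{\Dd(X)}(\Q_X, -[q]) : \Mc(X) \to \mathrm{Vect}_\Q$ is effaceable, i.e. every constructible motive $M \in \Mc(X)$ admits an injection $M \hookrightarrow N$ with $N \in \Mc(X)$ such that $\Hom_{\Dd(X)}(\Q_X, M[q]) \to \Hom_{\Dd(X)}(\Q_X, N[q])$ vanishes, which is exactly the statement to be proved.

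The only point that warrants a word is that the preceding proposition does apply here, which it does: $X$ belongs to $\mathrm{Var}_k$ and is quasi-compact, hence covered by finitely many affine opens, and this is all the induction in that proposition's proof requires (together with separatedness of $X$, used to guarantee that pairwise intersections of affine opens are again affine). So there is no genuine obstacle left at this stage; the substantive work has already been carried out in \Cref{thm1}, \Cref{corLAdjAdm} and \Cref{bracket} and in the preceding proposition, and the present corollary merely records the case $U = X$, which is the one of primary interest for the rest of the paper.
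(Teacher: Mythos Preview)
Your proof is correct and matches the paper's own argument exactly: the paper's proof is the single sentence ``One takes $U=X$ in the previous proposition.'' Your additional remarks (unwinding the definition of admissibility and noting that $X$ is covered by finitely many affines) are sound but already implicit in the setup.
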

\begin{proof}
	One takes $U=X$ in the previous proposition.
\end{proof}

\subsection{Lisse motives enter the party.}

\begin{prop}
	\label{lissadm}
	Let $X$ be a variety. Any lisse motive in $\Mc(X)$ is admissible.
\end{prop}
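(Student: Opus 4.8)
The plan is to reduce the statement immediately to two facts that have already been proved: that the constant motive is admissible on any variety, and that admissibility is preserved by tensoring with a lisse object. The key (and only) observation is that a lisse motive $L\in\Mc(X)$ can be written as $L\simeq\Q_X\otimes L$, since $\Q_X$ is the unit for the tensor product on $\Dd(X)$; because $-\otimes L$ is $t$-exact (recorded in \Cref{trucliss}, as $L$ is dualisable), this identification already holds inside the heart $\Mc(X)$, with no derived correction. So there is nothing to stratify or glue here: the object $L$ is, up to canonical isomorphism, just $\Q_X$ tensored with a lisse object.

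Concretely, first I would invoke \Cref{thm2}, which gives that for any variety $X$ the constant motive $\Q_X\in\Mc(X)$ is admissible. Then I would apply \Cref{corLAdjAdm}(3): tensoring an admissible motive by a lisse motive yields an admissible motive, the mechanism being that $-\otimes L$ is a $t$-exact left adjoint (left adjoint to $\sHom(L,-)$), so \Cref{LAdjAdm} — itself powered by the injective-free transfer of effaceability of \Cref{Injective} — applies. Taking $M=\Q_X$ in \Cref{corLAdjAdm}(3) produces that $\Q_X\otimes L\simeq L$ is admissible, which is exactly the assertion. In other words the proof is a single line once \Cref{thm2} and \Cref{corLAdjAdm} are in hand.

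Accordingly, there is essentially no obstacle remaining at this stage; the genuine content was front-loaded into \Cref{beforethm1}/\Cref{thm2} (Nori's effaceability argument for $\Q_{\A^n_k}$ on affine space, transported by $\pi^*$, $j_!$, $i_*$ and glued along open covers via Mayer--Vietoris and \Cref{bracket}) and into \Cref{corLAdjAdm} (the adjunction trick avoiding injective objects). The one point worth stating carefully, though it is harmless, is that a \emph{lisse} object of $\Mc(X)$ has Betti realisation a genuine local system concentrated in degree $0$ — it lies in the heart of the constructible $t$-structure, which $R_B$ detects $t$-exactly and conservatively — so that $-\otimes L$ is indeed $t$-exact and $\Q_X\otimes L$ really computes $L$ in $\Mc(X)$; this is already part of \Cref{trucliss}, so it requires no new argument.
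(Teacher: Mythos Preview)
Your proof is correct and is exactly the paper's argument: invoke \Cref{thm2} to get admissibility of $\Q_X$, then apply \Cref{corLAdjAdm}(3) to $L\simeq\Q_X\otimes L$. The extra justifications you spell out (why $-\otimes L$ is $t$-exact, why the isomorphism holds in the heart) are implicit in the paper's one-line proof and add nothing new.
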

\begin{proof}
	By \Cref{thm2} the unit object $\Q_X$ on $X$ is admissible. By \Cref{corLAdjAdm} $L \simeq \Q_X\otimes L$ is admissible.
\end{proof}

\begin{thm}
	\label{thm3}
	Let $X$ be a variety. Then any $M\in \Mc(X)$ is admissible, that is, for every $q>0$, the functor $\mathrm{Hom}_{\Dd(X)}(M,-[q]):\Mc(X)\to \mathrm{Vect}_\Q$ is effaceable.
\end{thm}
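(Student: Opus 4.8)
The plan is to prove \Cref{thm3} by induction on $\dim X$, following Nori's dévissage: an arbitrary $M\in\Mc(X)$ is cut, via a localisation sequence, into a piece that is lisse on a dense open and a piece supported on a closed subvariety of strictly smaller dimension. For the base case $\dim X=0$ the scheme $X$ has only finitely many points, so the realisation of any $M\in\Mc(X)$ is a local system; thus every object of $\Mc(X)$ is lisse, and \Cref{lissadm} makes it admissible.

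For the inductive step I would argue as follows. Let $M\in\Mc(X)$. By \Cref{trucliss} there is a dense open immersion $j:U\hookrightarrow X$ on which $M$ is dualisable, hence lisse; since $j^{*}$ is $t$-exact for the constructible $t$-structure, $j^{*}M$ still lies in $\Mc(U)$. Let $i:Z\hookrightarrow X$ be the complementary closed immersion; as $Z$ contains no generic point of $X$, we have $\dim Z<\dim X$. Because $j_{!}$, $i_{*}$, $j^{*}$ and $i^{*}$ are all $t$-exact for the constructible $t$-structure, the localisation triangle for the open-closed decomposition $(U,Z)$ becomes a short exact sequence
\[
0\longrightarrow j_{!}j^{*}M\longrightarrow M\longrightarrow i_{*}i^{*}M\longrightarrow 0
\]
in $\Mc(X)$ (this is $0\to M[U]\to M\to M/M[U]\to 0$ in the notation above). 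Here $j^{*}M$ is a lisse motive in $\Mc(U)$, hence admissible by \Cref{lissadm}, so $j_{!}j^{*}M$ is admissible by part (1) of \Cref{corLAdjAdm} ($j$ is étale); and $i^{*}M\in\Mc(Z)$ is admissible by the induction hypothesis, so $i_{*}i^{*}M$ is admissible by part (2) of \Cref{corLAdjAdm} ($i$ is finite). With the two outer terms admissible, part (1) of \Cref{stadm} applied to the displayed sequence shows that $M$ is admissible, closing the induction.

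Granting \Cref{thm2}, \Cref{lissadm}, \Cref{corLAdjAdm} and \Cref{stadm}, the argument is essentially formal. The only points that need genuine care are the choice of the dense open $U$ with $\dim Z<\dim X$, so that the induction is well founded, and the verification that all four functors $j_{!},i_{*},j^{*},i^{*}$ are $t$-exact for the constructible $t$-structure, which is exactly what lets us read the localisation triangle as a short exact sequence in the heart $\Mc(X)$. I expect this bookkeeping, rather than any real difficulty, to be the main obstacle.
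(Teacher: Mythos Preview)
Your proof is correct and follows the same d\'evissage via localisation as the paper, combining \Cref{lissadm}, \Cref{corLAdjAdm}, and \Cref{stadm} in exactly the same way. The only cosmetic difference is that the paper runs a Noetherian induction on the support of $M$ inside a fixed $X$ rather than on $\dim X$; this forces the paper to perform one extra localisation (first identifying $\res{M}{U}$ with a pushforward from a closed subset of $U$ on which $M$ is lisse), whereas your single localisation on $X$ is more direct but equivalent in content.
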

\begin{proof}
	We prove the result by Noetherian induction on the support $S$ of $M$ in $X$, that is the complement of the largest open subset $V$ of $X$ such that $\res{M}{V} = 0$. We follow the proof of \cite[Proposition 3.10]{MR1940678}.
	
	Let $S^d$ be the union of the irreducible components of maximal dimension of $S$. We denoted by $d$ the dimension of $S$. Let $U_\mathrm{liss}$ be the largest open subset of $S^d$ on which $M$ is lisse. Then there is an affine open subset $U_1$ of $X$ such that the intersection $U_1\cap S^d$ is contained in $U_\mathrm{liss}$ and is nonempty. Let $U_2'$ be the open subset of $X$ obtained by removing from $U_1$ the irreducible components of $S$ that are not of maximal dimension. We have that $U_2'\cap S$ is nonempty and contained in $U_\mathrm{liss}$. We choose a nonempty affine open subset $U_2$ of $U_2'$ such that $U_2\cap S$ is nonempty and a Noether normalisation $g_2\colon U_2\cap S\to \A^d_k$. It extends to a map $f_2\colon U_2\to \A^d_k$. Indeed, because $U_2\cap S$ is closed in $U_2$ and the three schemes $U_2$, $U_2\cap S$ and $\A^d_k$ are affine thus we may choose in $\oscr_{U_2}(U_2)$ lifts of the images of the coordinates of $\A^d_k$ in $\oscr_{U_2\cap S}(U_2\cap S)$. Let $W\subset \A^d_k$ be a nonempty open subset over which $g_2$ is smooth. As $g_2$ is of relative dimension $0$ in fact $g_2$ is étale over $W$. We set $U := f_2^{-1}(W)$. 
	
	Let $Z = U\cap S$ and $g\colon Z\to W$ be the restriction of $g_2$ to $Z$. By construction we have $Z = g_2^{-1}(W)$ thus the map $g\colon Z\to W$ is finite and étale. We also let $f:U\to W$ be the restriction restriction of $f_2$ to $U$. The situation may be summarised in the following commutative diagram:
	\[\begin{tikzcd}
		Z & U & W \\
		{U_2\cap S} & {U_2} & {\A^n_k}
		\arrow["i"', from=1-1, to=1-2]
		\arrow["g"', from=1-1, to=1-3,bend left=30]
		\arrow[from=1-1, to=2-1]
		\arrow["\lrcorner"{anchor=center, pos=0.125}, draw=none, from=1-1, to=2-2]
		\arrow["f",swap, from=1-2, to=1-3]
		\arrow[from=1-2, to=2-2]
		\arrow["\lrcorner"{anchor=center, pos=0.125}, draw=none, from=1-2, to=2-3]
		\arrow[from=1-3, to=2-3]
		\arrow[from=2-1, to=2-2]
		\arrow["{g_2}"', from=2-1, to=2-3, bend right=30]
		\arrow["{f_2}"', from=2-2, to=2-3]
	\end{tikzcd}\]
	where we have denoted by $i$ the inclusion of $Z$ inside $U$. 

	Note that because the restriction of $M$ to $Z$ is lisse and $g$ is finite étale,  the object $g_*M_{\mid Z}$ is still lisse, so that $L:=f^*g_*M_{\mid Z}$ is lisse. The restriction $i^*L$ to $Z$ of $L$ is $g^*g_*M_{\mid Z}$ which has $M_{\mid Z}$ as a direct factor because $g$ is étale.

	Because the support of $M_{\mid U}$ is contained in $Z$ we have that $i_*M_{\mid Z}= i_*i^*M_{\mid U}\simeq M_{\mid U}$.
	 By the above, this shows that $M_{\mid U}$ is a direct factor of $i_*L_{\mid Z}$. Let $j$ be the inclusion of the open complement of $Z$ in $U$. By localisation we have that $i_*L_{\mid Z} \simeq L/j_!j^*L$. 
	 By \Cref{lissadm} the objects $L$ and $j^*L$ are admissible, thus as by \Cref{corLAdjAdm} the object $j_!j^*L$ is then admissible, we deduce by \Cref{bracket} that the object $i_*L_{\mid Z}$ is admissible.
	  Thus the restriction $M_{\mid U}$ of $M$ to $U$ is admissible as a direct factor of an admissible object. 
	  
	  If $\iota\colon U\to X$ is the open immersion then $\iota_!M_{\mid U}$ is admissible by \Cref{corLAdjAdm}. We have an exact sequence
	 $$0\to \iota_!M_{\mid U}\to M\to M/\iota_!M_{\mid U}\to 0$$ in which the support of $M/\iota_!M_{\mid U}$ is strictly smaller than the support of $M$. By Noetherian induction $M/\iota_!M_{\mid U}$ is admissible, thus by \Cref{stadm} we finally obtain that $M$ is admissible.
\end{proof}
\begin{cor}
	\label{cccp}
	Let $X$ be a variety. The natural functor $\Dd^b(\Mc(X))\to\Dd(X)$ is an equivalence.
\end{cor}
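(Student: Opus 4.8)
The plan is to prove that the functor in question, which I will call $\rho\colon\Dd^b(\Mc(X))\to\Dd(X)$, is both fully faithful and essentially surjective. Since $\rho$ is a triangulated functor and every object of $\Dd^b(\Mc(X))$ admits a finite filtration whose graded pieces are shifts of objects of $\Mc(X)$, the five lemma applied to the long exact sequences coming from the triangles of such a filtration reduces full faithfulness to the assertion that for all $M,N\in\Mc(X)$ and all $n\in\Z$ the canonical comparison map
\[
\Phi_n\colon \Ext^n_{\Mc(X)}(M,N)\longrightarrow\Hom_{\Dd(X)}(M,N[n])
\]
from Yoneda $\Ext$ is an isomorphism.

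The cases $n<0$ (both sides vanish), $n=0$ (the heart is a full subcategory of $\Dd(X)$) and $n=1$ (the usual identification of $\Hom_{\Dd(X)}(M,N[1])$ with the group of Yoneda extensions, valid in the heart of any $t$-structure) are immediate, so I would treat $n\geqslant2$ by induction on $n$, following the argument of Beilinson recorded in \cite[3.1.16]{MR0751966}. Fix $M,N$; by \Cref{thm3} the object $M$ is admissible, so I can pick a monomorphism $N\hookrightarrow N'$ in $\Mc(X)$, with cokernel $N''$, for which $\Hom_{\Dd(X)}(M,N[n])\to\Hom_{\Dd(X)}(M,N'[n])$ is zero. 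Comparing the long exact $\Ext$-sequence of $0\to N\to N'\to N''\to 0$ with the long exact sequence obtained by applying $\Hom_{\Dd(X)}(M,-)$ to the triangle $N\to N'\to N''\to N[1]$ — compatibly, because $\rho$ is a triangulated functor respecting the $t$-structures — and feeding in the inductive hypothesis in degree $n-1$, one gets that $\Phi_n$ is surjective (using the chosen vanishing, together with surjectivity of the connecting map $\Hom_{\Dd(X)}(M,N''[n-1])\to\Hom_{\Dd(X)}(M,N[n])$) and then that $\Phi_n$ is injective (writing a class in $\Ext^n_{\Mc(X)}(M,N)$ as the connecting image, for some short exact sequence, of a class in $\Ext^{n-1}_{\Mc(X)}(M,-)$ and applying the inductive hypothesis in degree $n-1$ once more). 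This closes the induction and proves full faithfulness.

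For essential surjectivity I would observe that the essential image of $\rho$ is a triangulated subcategory of $\Dd(X)$ containing the heart $\Mc(X)$; since the constructible $t$-structure on $\Dd(X)$ is bounded, every object $K$ is built from its finitely many non-zero cohomology objects for the constructible $t$-structure, which lie in $\Mc(X)$, by finitely many cones, hence lies in the essential image. Thus $\rho$ is an equivalence. The substantive input here is \Cref{thm3}, the adaptation of Nori's argument carried out in the preceding subsections; what remains is the formal Beilinson dévissage, and the only delicate points are to organise the induction so that surjectivity and injectivity of $\Phi_n$ are propagated together, and to make sure the comparison functor $\rho$ is genuinely compatible with the connecting morphisms of the two families of long exact sequences (which is where the structure assumed on $\Dd$, namely that it underlies a stable $\infty$-category, is used).
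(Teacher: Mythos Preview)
Your proposal is correct and follows exactly the same route as the paper: the paper's proof is a one-line invocation of \cite[Lemme 3.1.16]{MR0751966} with the hypotheses supplied by \Cref{thm3}, and what you have written is precisely an unpacking of that lemma's argument (effaceability plus dévissage on $n$ for full faithfulness, boundedness of the $t$-structure for essential surjectivity). There is nothing to add.
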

\begin{proof}
	We apply \cite[Proposition 3.1.16]{MR0751966}. The conditions are verified because of the previous \Cref{thm3}.
\end{proof}

\section{Higher categorical enhancements.}
\label{section2}

\subsection{Categorical preliminaries.}

Recall the following fundamental definition of Lurie:
\begin{defi}
	An $\infty$-category $\ccal$ is \emph{stable} if it has all finite limits and colimits, if initial objects coincide with final objects and if any commutative square in $\ccal$
	is cocartesian if and only if it is cartesian.
	An exact functor between stable $\infty$-categories is an $\infty$-functor preserving finite limits and colimits.
\end{defi}
The homotopy category $\mathrm{ho}(\ccal)$ of a stable $\infty$-category is always a triangulated category (\cite[Theorem 1.1.2.14.]{lurieHigherAlgebra2022}) and exact functors induce triangulated functors. By definition, a t-structure on a stable $\infty$-category is a t-structure on its homotopy category.

\begin{defi}
	The bounded derived $\infty$-category of an abelian category $\acal$ is the $\infty$ category $\dcal^b(\acal)$ obtained by inverting quasi-isomorphisms in the category $\mathrm{Ch}^b(\acal)$ of bounded complexes of objects of $\acal$. It is a stable $\infty$-category with a t-structure, and its homotopy category is the usual derived category $\Dd^b(\acal)$.
\end{defi}
If $F\colon \acal\to \ccal$ is a functor between an abelian category and a stable $\infty$-category, we will say that $F$ is exact if it preserves finite coproducts and it sends short exact sequences to exact triangles. 

We will use the following result of Bunke, Cisinski, Kasprowski, and Winges:
\begin{thm}[{\cite[Corollary 7.4.12]{bunkeControlledObjectsLeftexact2019}}]
	\label{RestrToHeart}
	Let $\ccal$ be a stable $\infty$-category and let $\acal$ be an abelian categories. Then restriction to the heart 
	gives an equivalence of $\infty$-categories $$\mathrm{Fun}^\mathrm{ex}(\dcal^b(\acal),\ccal)\to\mathrm{Fun}^{\mathrm{ex}}(\acal,\ccal)$$
	between $\infty$-categories of exact functors.
\end{thm}

There is an easy generalisation with multiple variables:
\begin{cor}
	\label{CurryDb}
	Let $\ccal$ be a stable $\infty$-category and let $\acal_1,\dots,\acal_n$ be abelian categories. Denote by $\mathrm{Fun}^{\mathrm{nex}}(\prod_i\dcal^b(\acal_i),\ccal)$ the $\infty$-category of $n$-multi-exact functors
	$\prod_i \dcal^b(\acal_i)\to\ccal$ that is functors that are exact in each variables.
	Denote also similarly by $\mathrm{Fun}^{\mathrm{nex}}(\prod_i\acal_i,\ccal)$ the $\infty$-category of $n$-multi-exact functors.
	Then the restriction to the hearts functor \begin{equation}
		\mathrm{Fun}^{\mathrm{nex}}(\prod_i\dcal^b(\acal_i),\ccal)\to \mathrm{Fun}^{\mathrm{nex}}(\prod_i\acal_i,\ccal)
	\end{equation}
	is an equivalence of $\infty$-categories.
\end{cor}
\begin{proof}
	We give the proof for $n=2$, the general case being similar and can be reduced to the case $n=2$ by induction. We have canonical equivalence of $\infty$-categories
	$\mathrm{Fun}(\dcal^b(\acal_1)\times\dcal^b(\acal_2),\ccal)\simeq \mathrm{Fun}(\dcal^b(\acal_1),\mathrm{Fun}(\dcal^b(\acal_2),\ccal))$. This equivalences induces 
	$$\mathrm{Fun}^{\mathrm{2ex}}(\dcal^b(\acal_1)\times\dcal^b(\acal_2),\ccal)\simeq\mathrm{Fun}^\mathrm{ex}(\dcal(\acal_1),\mathrm{Fun}^\mathrm{ex}(\dcal^b(\acal_2),\ccal)).$$
	By \Cref{RestrToHeart} the latter category is exactly $\mathrm{Fun}^{\mathrm{ex}}(\acal_1,\mathrm{Fun}^{\mathrm{ex}}(\acal_2,\ccal))$, which again is equivalent to
	$ \mathrm{Fun}^{\mathrm{2ex}}(\acal_1\times\acal_2,\ccal)$. 
\end{proof}

We will also need a result on the monoidal structure of the bounded derived category. We denote by $\mathrm{SymMono}_1$ the $2$-category of symmetric monoidal $1$-categories and symmetric monoidal functors.

\begin{prop}
	\label{symMonoCatDeri}
	Let $\bcal$ be a symmetric monoidal abelian category such that the tensor product which is exact in each variable. Then there is a canonical symmetric monoidal structure on the stable $\infty$-category $\dcal^b(\bcal)$ such that the inclusion functor 
	$\bcal\to \dcal^b(\bcal)$ is symmetric monoidal. Moreover, for any category $I$ and any functor 
	$\acal: I\to \mathrm{SymMono}_1$ such that for each $i\in I$ the category $\acal(i)$ is a symmetric monoidal abelian category with exact tensor product as above, and such that the transition functors are exact, there is a lift of the functor 
	$\dcal^b\circ \acal\colon I\to \catinfty$
	to a functor $I\to \mathrm{CAlg}(\catinfty).$
\end{prop}

\begin{proof}
	The first case follows from the second with $I=*$.
	Let $\acal:I\to \mathrm{SymMono}_1$ be a diagram of symmetric monoidal abelian categories and symmetric monoidal exact functors, and such that for each $i\in I$ the tensor product on $\acal(i)$ is exact in both variables. We can compose this functor with $\mathrm{Ch}^b(-)$ to obtain a diagram of symmetric monoidal additive categories (the monoidal structure on $\mathrm{Ch}^b(\acal(i))$ is induced by that of $\acal(i)$, using the sign trick). By \cite[Chapter XI, Section 1, Theorem 1]{MR1712872} we know that any symmetric monoidal $1$-category has all higher coherences. Written differently as in Lurie's \cite[Corollary 5.1.1.7]{lurieHigherAlgebra2022}, the forgetful functor induces an equivalence of $2$-categories
	$$\mathrm{CAlg}(\mathrm{Cat}_1)\to \mathrm{SymMono}_1.$$ Thus we can see $\mathrm{Ch}^b\circ\acal$ as a functor $$I\to \mathrm{CAlg}(\mathrm{Cat}_1)\subset\mathrm{CAlg}(\catinfty).$$ 
	Now by \cite[Theorem 2.4.3.18 and Proposition 2.4.2.5]{lurieHigherAlgebra2022} the data of this functor is equivalent to the data of a $I^\amalg$-monoid, which is classified by a cocartesian fibration 
	$p\colon \mathfrak{C}\to I^\amalg$ (we denote by $I^\amalg$ the $\infty$-operad constructed in \cite[Construction 2.4.3.1]{lurieHigherAlgebra2022}). By \cite[Proposition 2.1.4]{MR3460765} we can fiberwise invert quasi-isomorphisms to obtain a cocartesian fibration $q\colon \mathfrak{D}\to I^\amalg$ which classifies (using unstraightening\footnote{One has to verify that the functor $I^\amalg\to\catinfty$ obtained by unstraightening is still a $I^\amalg$-monoid, but this is clear because $\dcal^b(-)$ commutes with finite products: this is the case for $\mathrm{Ch}^b(-)\subset\mathrm{Fun}(\Z,-)$ and then we invert products of maps.} and \cite[Theorem 2.4.3.18, Proposition 2.4.2.5]{lurieHigherAlgebra2022} again) a functor 
	$I\to\mathrm{CAlg}(\catinfty)$ sending an object $i\in I$ to $\dcal^b(\acal(i))$. The fact that the functor $\acal(i)\to\dcal^b(\acal(i))$ is symmetric monoidal comes from the exactness of the tensor product on $\acal(i)$.
\end{proof}

We also have a version of the universal property of $\dcal^b(\acal)$ (see \Cref{RestrToHeart}) that works in families.

\begin{prop}
	\label{SymMonoReal}
	Let $\ccal: I\to \mathrm{CAlg}(\catinfty)$ be a diagram of stably symmetric monoidal $\infty$-categories such that for each $i\in I$ the $\infty$-category $\ccal(i)$ has a t-structure such that the tensor product is t-exact in each variable and every arrow $i\to j$ in $I$ induces a t-exact functor. Then the canonical functors $\dcal^b(\ccal(i)^\heartsuit)\to \ccal(i)$ assemble to give a natural transformation $\dcal^b(\ccal^\heartsuit)\Rightarrow \ccal$ of functors $I\to \mathrm{CAlg}(\catinfty)$.
\end{prop}
\begin{proof}
	For $\bcal$ an abelian category, denote by $\kcal^b(\bcal)$ the $\infty$-category of bounded complexes in $\bcal$, that is the $\infty$-categorical localisation of the additive category of  bounded chain complexes $\mathrm{Ch}^b(\bcal)$ with respect to chain homotopies. By the work of
	Bunke, Cisinski, Kasprowski and Winges, we know (\cite[Theorem 7.4.0]{bunkeControlledObjectsLeftexact2019}) that $\kcal^b(\bcal)$ is the value at $\bcal$ of a functor 
	$$\kcal^b(-)\colon \mathrm{Cat}_\infty^\mathrm{add}\to\mathrm{Cat}_\infty^\mathrm{ex}$$ from the $\infty$-category of additive $\infty$-categories and coproduct preserving functors to the $\infty$-category $\mathrm{Cat}_\infty^\mathrm{ex}$ of small stable $\infty$-categories and exact functors. Moreover, this functor is left adjoint to the forgetful functor $$\mathrm{ff}\colon \mathrm{Cat}_\infty^\mathrm{ex}\to\mathrm{Cat}_\infty^\mathrm{add}.$$
	By Cisinski's \cite[Theorem 6.1.22]{MR3931682} this induces an adjunction 
	\begin{equation}\label{adjKb}\begin{tikzcd}
		\mathrm{Fun}(I^\amalg,\mathrm{Cat}_\infty^\mathrm{add})
			\arrow[r, bend left = 25, "\circ \kcal^b"{name=D}]
			\arrow[r, leftarrow, bend right = 25, swap, "\circ \mathrm{ff}"{name=C}]
			  \arrow[d, from=D, to=C, phantom, "{\bot}"]
		  & \mathrm{Fun}(I^\amalg,\mathrm{Cat}_\infty^\mathrm{ex})
	\end{tikzcd},\end{equation}
 where the category $I^\amalg$ is as introduced in the proof of the above proposition.	Our assumptions on $\ccal$ imply that if we denote by $\acal(i)$ the heart of the t-structure on $\ccal(i)$, we have a natural transformation $\acal\Rightarrow\ccal$ of functors $I\to\mathrm{CAlg}(\mathrm{Cat}_\infty^\mathrm{add}$). Now, as in the proof of \Cref{symMonoCatDeri}, Lurie's \cite[Theorem 2.4.3.18 and Proposition 2.4.2.5]{lurieHigherAlgebra2022} give that the data of this natural transformation is equivalent to the data of a natural transformation $\mathfrak{A}\Rightarrow \mathfrak{C}$ of $I^\amalg$-monoids $I^\amalg\to \mathrm{Cat}_\infty^\mathrm{add}$  (see \cite[Definition 2.4.2.1]{lurieHigherAlgebra2022} for a definition). By \Cref{adjKb}, this gives a natural transformation 
	$$\mathrm{can}\colon\kcal^b\circ \mathfrak{A} \Rightarrow \mathfrak{C}$$ of functors $I^\amalg\to \mathrm{Cat}_\infty^\mathrm{ex}$. We claim that $\kcal^b\circ\mathfrak{A}$ is still a $I^\amalg$-monoid. Indeed, it suffices to prove that $\kcal^b$ preserves finite products, but both in $\catinfty^\mathrm{ex}$ and in $\catinfty^\mathrm{add}$, finite product and coproduct agree (this can be proven exactly as in \cite[Lemma 2.1.38]{bunkeControlledObjectsLeftexact2019}), so that this follows from $\kcal^b$ being a left adjoint which thus preserves colimits. 

	Unravelling the definitions, we have proven that the natural transformation $\kcal^b(\acal)\Rightarrow \ccal$ is well defined and is symmetric monoidal. We will now show that this factors through $\dcal^b(\acal)$, and that everything stays symmetric monoidal. For this, we will see our map of $I^\amalg$-monoidal $\kcal^b\circ\mathfrak{A}\Rightarrow \mathfrak{C}$ through straightening, thus as a commutative triangle
		$$\begin{tikzcd}
		\int_{I^\amalg}\kcal^b\circ\mathfrak{A}\ar[r,"\mathfrak{r}"]\ar[d,"\kappa"] & \int_{I^\amalg}\mathfrak{C}\ar[dl,"\gamma"]\\
		I^\amalg &
	\end{tikzcd}$$ where $\kappa$ and $\gamma$ are the cocartesian fibrations classifying the functors $\kcal^b\circ\mathfrak{A}$ and $\mathfrak{C}$. We consider a marking $W$ on $\int_{I^\amalg}\kcal^b\circ\mathfrak{A}=:\mathscr{K}$ to be the set of arrows
	 that are products of quasi-isomorphisms. That is an arrow $f$ in $\mathscr{K}$ is marked if there exists an integer $n$ and an object $\underline{i}=(i_1,\dots,i_n)\in I^\amalg_{\langle n\rangle}$ such that $f$ is a map in the fiber $\prod_{j=1}^n\kcal^b(\acal(i_j))$ above $\underline{i}$ and is of the form $f_1\times\cdots\times f_n$ with each $f_j$ being a quasi-isomorphism. Then the cocartesian fibration $\kappa$, together with the data of $W$ and the marking of isomorphisms in $I^\amalg$, is a marked cocartesian fibration in the sense of Hinich \cite[Definition 2.1.1]{MR3460765}. Because each $\acal(i)$ is the heart of a t-structure on $\ccal(i)$, all maps in $W$ are sent to isomorphisms in $\int_{I^\amalg}\mathfrak{C}=:\mathscr{C}$. Indeed, over a multi-index $(i_1,\dots,i_n)$ in $I^\amalg$, the functor $\mathfrak{r}$ becomes the functor 
	 \[\prod_{j=1}^n \kcal^b(\acal(i_j))\to \prod_{j=1}^n\ccal(i_j)\] which sends marked maps to isomorphisms by \cite[Proposition 7.4.11]{bunkeControlledObjectsLeftexact2019}. Thus the map 
	$$\mathscr{K}\to\mathscr{C}$$ factors through 
	$$\mathscr{K}\to \mathscr{K}[W^{-1}].$$ By \cite[Proposition 2.1.4]{MR3460765}, the map $\mathscr{K}[W^{-1}]\to I^\amalg$ is a cocartesian fibration, and moreover, the fiber above a $\underline{i}=(i_1,\dots,i_n)$ is the localisation of $\prod_{j= 1}^n\kcal^b(\acal(i_j))$ with respect to products of quasi-isomorphisms, which is $\prod_{j=1}^n\dcal^b(\acal(i_j))$. Because $I^\amalg$ is a $1$-category, to check that the diagram 
	$$\begin{tikzcd}
		\mathscr{K}[W^{-1}]\ar[r]\ar[d,"\kappa"] & \mathfrak{C}\ar[dl,"\gamma"]\\
		I^\amalg &
	\end{tikzcd}$$ commutes it suffices by adjunction to check that $\mathrm{ho}(-)$ of it commutes, which is clear because the horizontal map preserves fibers. Thus after unstraightening we obtain a map of $I^\amalg$-monoids 
	$$\dcal^b\circ\mathfrak{A}\to\mathfrak{C}$$ which express exactly the symmetric monoidality of a natural transformation $\dcal^b\circ\acal\to\ccal$, finishing the proof.

\end{proof}

\begin{rem}
	\label{remKb}
The proof of \Cref{SymMonoReal} also give the same result with $\kcal^b(-)$ instead of $\dcal^b(-)$, and the fact that $\kcal^b(-)\Rightarrow\dcal^b(-)$ is symmetric monoidal on symmetric monoidal abelian categories and symmetric monoidal exact functors.
\end{rem}

We will use the following two theorems :
\begin{thm}[Theorem 3.3.1 of \cite{MR4093970}]
	\label{adjointHo}
Let $\dcal$ be an $\infty$-category which admits finite limits and let $\ccal$ be an $\infty$-category. 
Let $G:\dcal\to\ccal$ be a functor which preserves finite limits. Then $G$ admits a left adjoint 
if and only if $\mathrm{ho}(G):\mathrm{ho}(\dcal)\to\mathrm{ho}(\ccal)$ does.
\end{thm}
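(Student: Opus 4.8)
The plan is to dispose of the easy implication formally and to reduce the substantial one to a statement about a single finite‑complete $\infty$‑category, which is then attacked through cotensors by finite spheres. The implication ``$G$ has a left adjoint $\Rightarrow\mathrm{ho}(G)$ has a left adjoint'' is immediate, because $\mathrm{ho}\colon\catinfty\to\mathrm{Cat}$ carries an adjunction to an adjunction: the unit and counit descend, and the triangle identities, valid up to homotopy, become strict equalities in the homotopy categories. So all the effort goes into the converse.

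For the converse, recall that $G$ admits a left adjoint if and only if for every object $c\in\ccal$ the comma $\infty$‑category $\mathcal E_c:=\ccal_{c/}\times_\ccal\dcal$ has an initial object, equivalently the functor $\mathrm{map}_\ccal(c,G(-))\colon\dcal\to\mathrm{Gpd}_\infty$ is corepresentable. Since $G$ preserves finite limits, $\mathcal E_c$ admits finite limits (terminal object $(G(1_\dcal),c\to G(1_\dcal))$ with $1_\dcal$ terminal in $\dcal$, pullbacks computed through those of $\dcal$), and $\mathrm{map}_\ccal(c,G(-))$ preserves finite limits. On the other hand the hypothesis that $\mathrm{ho}(G)$ has a left adjoint $\overline F$ is precisely the statement that the $1$‑categorical comma $\mathrm{ho}(\ccal)_{c/}\times_{\mathrm{ho}(\ccal)}\mathrm{ho}(\dcal)$ has an initial object; lifting it along the canonical bijective‑on‑objects functor from $\mathrm{ho}(\mathcal E_c)$ produces an object $x_0=(\overline Fc,\eta_c)$ of $\mathcal E_c$ together with $\eta_c\colon c\to G(\overline Fc)$, hence a natural transformation
\[\Phi\colon\mathrm{map}_\dcal(\overline Fc,-)\ \Rightarrow\ \mathrm{map}_\ccal(c,G(-))\]
of finite‑limit‑preserving functors $\dcal\to\mathrm{Gpd}_\infty$ which is a bijection on $\pi_0$ at every object. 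Everything reduces to upgrading $\Phi$ to an objectwise equivalence, i.e.\ to showing that $x_0$ is initial in $\mathcal E_c$.

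The engine for the upgrade is the observation that for a finite space $K$ (a finite colimit of points, e.g.\ a sphere $S^k$) and $d\in\dcal$ the cotensor $d^K$ exists in $\dcal$, being a finite limit, is preserved by every finite‑limit‑preserving functor, and satisfies $\mathrm{map}(x,d^K)\simeq\mathrm{Map}_{\mathrm{Gpd}_\infty}(K,\mathrm{map}(x,d))$. Applying the $\pi_0$‑bijectivity of $\Phi$ at the objects $d^{S^k}$ forces $\mathrm{Map}_{\mathrm{Gpd}_\infty}(S^k,\Phi_d)$ to be a $\pi_0$‑bijection for all $k$; combined with the elementary fact that a connected space $Z$ with $\pi_0\mathrm{Map}_{\mathrm{Gpd}_\infty}(S^k,Z)$ a point has $\pi_k(Z)=0$, this is exactly what powers the ``absolute'' case of the theorem --- a finite‑complete $\infty$‑category whose homotopy category has an initial object has itself an initial object --- where one directly knows every mapping space out of the candidate object has trivial $\pi_0$, hence is connected, hence (running the sphere argument on the loop objects) contractible.

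The main obstacle is that in the relative situation $\mathrm{ho}(\mathcal E_c)$ is \emph{not} the $1$‑categorical comma $\mathrm{ho}(\ccal)_{c/}\times_{\mathrm{ho}(\ccal)}\mathrm{ho}(\dcal)$ --- passing to homotopy categories does not commute with the pullback --- so a priori one only knows that $x_0$ is \emph{weakly} initial in $\mathcal E_c$: the fibres of $\Phi_d$, which compute the mapping spaces of $\mathcal E_c$, are controlled only through the connecting maps of the fibration sequences $\mathrm{fib}(\Phi_d)\to\mathrm{map}_\dcal(\overline Fc,d)\to\mathrm{map}_\ccal(c,Gd)$, and these involve $\pi_1$ of the targets rather than just $\pi_0$. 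The delicate heart of the argument is to feed the $\pi_0$‑bijectivity of $\Phi$ at the loop objects $d^{S^1},\,d^{S^1\times S^1},\dots$ back into these connecting maps, so as to show successively that the fibres are nonempty, then connected, and finally contractible. Once this is carried out, $x_0$ is initial in $\mathcal E_c$ for every $c$, so $G$ acquires a left adjoint and the theorem follows.
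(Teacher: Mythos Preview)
The paper does not prove this statement at all: it is quoted verbatim as Theorem~3.3.1 of Nguyen--Raptis--Schrade and used as a black box, so there is no ``paper's own proof'' to compare against.

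As for your sketch itself: the easy direction and the reduction to showing that each comma $\mathcal E_c=\ccal_{c/}\times_\ccal\dcal$ has an initial object are fine, and your treatment of the \emph{absolute} case (a finite--complete $\infty$-category whose homotopy category has an initial object has one itself, via cotensors by spheres) is correct and is indeed the main engine of the Nguyen--Raptis--Schrade argument. The genuine gap is the last paragraph. You correctly identify that $\mathrm{ho}(\mathcal E_c)$ is not the $1$-categorical comma, so the candidate $x_0$ is a priori only weakly initial in $\mathcal E_c$; but your proposed fix --- ``feed the $\pi_0$-bijectivity of $\Phi$ at $d^{S^1},\,d^{S^1\times S^1},\dots$ back into the connecting maps'' --- is not an argument, only a hope. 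Concretely, $\pi_0$-bijectivity of $\Phi_{d^{S^1}}$ gives a bijection on \emph{free} homotopy classes of loops, i.e.\ on conjugacy classes of $\pi_1$, and a group homomorphism that is bijective on conjugacy classes need not be surjective (think of a Borel subgroup inside $\GL_n$ over an algebraically closed field), so the passage from ``$[S^k,\Phi_d]$ bijective for all $k$'' to ``$\pi_1(\Phi_d)$ surjective, hence fibres connected'' is not automatic. You have not explained how iterated tori, or any other finite cotensor, close this gap; this is precisely the delicate point, and your sketch stops short of it. In the cited reference the reduction of the relative case to the absolute one is handled through a more careful $2$-categorical criterion for initial objects rather than by the bare sphere argument you indicate.
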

\begin{cor}
	\label{adjointHo}
	Let $\ccal$ and $\dcal$ be a stable $\infty$-categories. 
Let $G:\dcal\to\ccal$ be an exact functor. Then $G$ admits a left (\emph{resp}. a right) adjoint 
if and only if $\mathrm{ho}(G):\mathrm{ho}(\dcal)\to\mathrm{ho}(\ccal)$ does.
\end{cor}
\begin{proof}
	The case of the left adjoint is the above theorem, and the case of the right adjoint follows by passing to the opposite $\infty$-categories.
\end{proof}
\begin{thm}[Theorem 7.6.10 of \cite{MR3931682}]
	\label{equivHo}
Let $F:\dcal\to\ccal$ be a functor between $\infty$-categories having finite limits. Assume that $F$ preserves finite limits. Then $F$ is an equivalence of $\infty$-categories if and only if $\mathrm{ho}(F):\mathrm{ho}(\dcal)\to\mathrm{ho}(\ccal)$ is an equivalence of categories.
\end{thm}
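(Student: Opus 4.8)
One direction being formal (apply $\mathrm{ho}(-)$), the plan is to assume that $\mathrm{ho}(F)$ is an equivalence of categories and show that $F$ is an equivalence of $\infty$-categories, that is, that $F$ is essentially surjective and fully faithful. Essential surjectivity is immediate: for $C\in\ccal$, essential surjectivity of $\mathrm{ho}(F)$ produces an $X\in\dcal$ and an isomorphism $FX\simeq C$ in $\mathrm{ho}(\ccal)$, and an isomorphism in the homotopy category of an $\infty$-category is always represented by an equivalence. The real content is full faithfulness, and for this I would fix $X,Y\in\dcal$ and aim to show that the map $\mathrm{map}_\dcal(X,Y)\to\mathrm{map}_\ccal(FX,FY)$ is an equivalence of spaces. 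On $\pi_0$ it is the bijection $\Hom_{\mathrm{ho}(\dcal)}(X,Y)\to\Hom_{\mathrm{ho}(\ccal)}(FX,FY)$ coming from full faithfulness of $\mathrm{ho}(F)$; by Whitehead's theorem what is left is to show that $\pi_n(\mathrm{map}_\dcal(X,Y),f)\to\pi_n(\mathrm{map}_\ccal(FX,FY),Ff)$ is a bijection for every $n\geqslant 1$ and every base point $f$.

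The device I would use is to reconstruct these higher homotopy groups from $\mathrm{ho}$-theoretic data by means of the finite limits. Since the sphere $S^n$ is a finite colimit of points (recursively $S^n=\ast\sqcup_{S^{n-1}}\ast$, with $S^{-1}=\emptyset$), the cotensor $Y^{S^n}$ exists in $\dcal$ and is built from $Y$ by finitely many fibre products: $Y^{S^0}=Y\times Y$ and $Y^{S^n}=Y\times_{Y^{S^{n-1}}}Y$ along the two constant maps. It carries an evaluation $e\colon Y^{S^n}\to Y$ and a constant section $c\colon Y\to Y^{S^n}$ with $e\circ c=\mathrm{id}_Y$, and $\mathrm{map}_\dcal(X,Y^{S^n})$ is naturally the space of maps from $S^n$ into $\mathrm{map}_\dcal(X,Y)$. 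As $F$ preserves finite limits, the comparison $F(Y^{S^n})\to(FY)^{S^n}$ is an equivalence, compatible with $e$ and $c$. Now I would note that the fibre of $e_\ast\colon\mathrm{map}_\dcal(X,Y^{S^n})\to\mathrm{map}_\dcal(X,Y)$ over $f$ is the space of pointed maps $S^n\to(\mathrm{map}_\dcal(X,Y),f)$, so its $\pi_0$ is $\pi_n(\mathrm{map}_\dcal(X,Y),f)$; the section $c_\ast$ forces the boundary map $\pi_1(\mathrm{map}_\dcal(X,Y),f)\to\pi_0$ of this fibre to be trivial, and the long exact sequence of the fibration then identifies
\[\pi_n(\mathrm{map}_\dcal(X,Y),f)\;\cong\;\big\{\,g\in\Hom_{\mathrm{ho}(\dcal)}(X,Y^{S^n}) : e\circ g=f\,\big\}\subseteq\Hom_{\mathrm{ho}(\dcal)}(X,Y^{S^n}),\]
naturally in the data, with the analogous description in $\ccal$. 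Since $\mathrm{ho}(F)$ is an equivalence it induces a bijection $\Hom_{\mathrm{ho}(\dcal)}(X,Y^{S^n})\to\Hom_{\mathrm{ho}(\ccal)}(FX,(FY)^{S^n})$ intertwining postcomposition by $e$ and sending $f$ to $Ff$, hence a bijection between the two subsets; this yields the wanted bijection on $\pi_n$, completing full faithfulness.

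The step I expect to require the most care is precisely the identification of $\pi_n(\mathrm{map}_\dcal(X,Y),f)$ with an $\mathrm{ho}(\dcal)$-theoretic hom-set: this is where the hypothesis that $F$ preserves finite limits is really used, both to build $Y^{S^n}$ inside $\dcal$ and, crucially, to ensure $F$ sends the evaluation $e$ and the section $c$ to the analogous maps on $(FY)^{S^n}$ — it is the section that kills the boundary term and makes the group $\pi_n$ visible already in the homotopy category. The rest is a bookkeeping with the long exact sequence of a split fibration together with Whitehead's theorem, and one only has to keep the cotensor identifications natural enough that $F$ intertwines the descriptions over $\dcal$ and over $\ccal$.
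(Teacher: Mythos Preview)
The paper does not prove this statement: it is quoted verbatim as Theorem 7.6.10 of Cisinski's book \cite{MR3931682}, with no argument given. There is therefore nothing to compare your proposal against in the paper itself.

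That said, your sketch is sound and is essentially the standard argument. A couple of small remarks. First, the identification $\pi_0(\mathrm{fib}_f)\cong\{g\in\Hom_{\mathrm{ho}(\dcal)}(X,Y^{S^n}) : e\circ g = f\ \text{in}\ \mathrm{ho}(\dcal)\}$ is correct precisely because the section $c$ makes the boundary map $\pi_1\to\pi_0(\mathrm{fib})$ vanish, as you say; be sure to state that the equation $e\circ g=f$ is taken in $\mathrm{ho}(\dcal)$, not on the nose. Second, one can streamline the argument by working with based loop objects $\Omega_yY=\{y\}\times_Y\{y\}$ instead of the free cotensors $Y^{S^n}$: these are again finite limits, $F$ preserves them, and one gets directly $\pi_n(\mathrm{map}_\dcal(X,Y),f)\cong\Hom_{\mathrm{ho}(\dcal)}(X,\Omega_f^nY)$ without needing to isolate a subset or invoke the splitting — this is closer in spirit to how Cisinski organises the proof. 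Your route via unbased cotensors works, but the based version avoids the bookkeeping with the split fibration.
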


Recall that by \cite[\href{https://kerodon.net/tag/01F1}{Tag 01F1}]{lurieKerodon}, we have the following proposition :
\begin{prop}
	\label{replacement}
	Let $F:\ccal\to\dcal$ be a functor of $\infty$-categories. Assume that for a collection $A\subset\ccal$ of objects of $\ccal$ is given the data, for each $a\in A$ of an isomorphism $f_a:F(a)\to d_a$ with $d_a\in\dcal$.
	Then there exist a functor $G:\ccal\to\dcal$ and a natural equivalence $g:F\Rightarrow G$ such that for every $a\in A$,
	we have $g_a = f_a$ (and this is really an equality).
\end{prop}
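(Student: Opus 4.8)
The plan is to repackage the data of the $f_a$ as a section of a trivial fibration and to read off $G$ and $g$ from that section. First I would reduce to a single extension problem: it suffices to construct a functor $H\colon\ccal\times\Delta^1\to\dcal$ with $H|_{\ccal\times\{0\}}=F$ (strictly), with $H$ sending every edge $\{c\}\times\Delta^1$ to an isomorphism of $\dcal$, and with $H|_{\{a\}\times\Delta^1}=f_a$ (strictly) for every $a\in A$. Indeed, given such an $H$ one sets $G:=H|_{\ccal\times\{1\}}$ and lets $g\colon F\to G$ be the morphism of $\mathrm{Fun}(\ccal,\dcal)$ determined by $H$; then $g$ is an isomorphism because each of its components $g_c=H|_{\{c\}\times\Delta^1}$ is, and $g_a=f_a$ on the nose, while $G(a)=H|_{\{a\}\times\{1\}}=d_a$.

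To build $H$, for an $\infty$-category $\dcal$ let $\dcal^{\mathrm{iso}}\subseteq\mathrm{Fun}(\Delta^1,\dcal)$ be the full subcategory spanned by the isomorphisms, and let $e_0,e_1\colon\dcal^{\mathrm{iso}}\to\dcal$ be source- and target-evaluation. The key input is that $e_0$ is a trivial fibration; this is standard (it appears in Lurie's foundational work on $\infty$-categories) and can be checked directly by solving lifting problems against the boundary inclusions $\partial\Delta^n\hookrightarrow\Delta^n$, using the fact that in an $\infty$-category a left horn $\Lambda^n_0\to\Delta^n$ whose initial edge is invertible admits a filler. Then I would form the pullback $\mathcal{E}:=\ccal\times_{\dcal}\dcal^{\mathrm{iso}}$ of $e_0$ along $F$, with projections $p\colon\mathcal{E}\to\ccal$ and $q\colon\mathcal{E}\to\dcal^{\mathrm{iso}}$; since trivial fibrations are stable under base change, $p$ is again a trivial fibration. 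A vertex of $\mathcal{E}$ lying over $c\in\ccal$ is exactly an isomorphism of $\dcal$ with source $F(c)$, so $a\mapsto f_a$ defines a map $A\to\mathcal{E}$ over $\ccal$, where $A$ is regarded as the discrete simplicial subset of $\ccal$ on the given objects. As $A\hookrightarrow\ccal$ is a monomorphism and $p$ is a trivial fibration, this map extends to a section $s\colon\ccal\to\mathcal{E}$ of $p$ which restricts to $a\mapsto f_a$ on $A$.

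Finally I would take $\widetilde{s}:=q\circ s\colon\ccal\to\dcal^{\mathrm{iso}}\subseteq\mathrm{Fun}(\Delta^1,\dcal)$ and let $H$ be its exponential adjoint $\ccal\times\Delta^1\to\dcal$. Then $H|_{\ccal\times\{0\}}=e_0\circ\widetilde{s}=e_0\circ q\circ s=F\circ p\circ s=F$ strictly, each edge $\{c\}\times\Delta^1$ is sent to $\widetilde{s}(c)\in\dcal^{\mathrm{iso}}$ and hence to an isomorphism, and for $a\in A$ we have $\widetilde{s}(a)=q(s(a))=q(f_a)=f_a$, so $H|_{\{a\}\times\Delta^1}=f_a$. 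This is precisely the $H$ demanded in the first paragraph, and the recipe there for $G$ and $g$ completes the argument. The one genuinely nontrivial ingredient is the claim that $e_0\colon\dcal^{\mathrm{iso}}\to\dcal$ is a trivial fibration, which rests on Joyal's horn-filling criterion for invertible edges; everything else is formal bookkeeping with lifting problems against trivial fibrations.
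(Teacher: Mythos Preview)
Your proof is correct and is essentially the same argument as the paper's, just with the exponential adjunction applied on the other side. The paper observes that restriction $\iota^*\colon\mathrm{Fun}(\ccal,\dcal)\to\mathrm{Fun}(A,\dcal)$ along the monomorphism $A\hookrightarrow\ccal$ is an isofibration (Kerodon 01F1), and then lifts the isomorphism $\gamma=(f_a)_{a\in A}$ in $\mathrm{Fun}(A,\dcal)$ along $\iota^*$ starting from $F$. Your approach transposes this lifting problem: instead of lifting the walking isomorphism against $\iota^*$, you lift the monomorphism $A\hookrightarrow\ccal$ against the trivial fibration $e_0\colon\dcal^{\mathrm{iso}}\to\dcal$ (after pulling back along $F$). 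By the pushout-product/pullback-hom adjunction these are literally the same lifting problem, and both rest on Joyal's special outer horn filling. The paper's packaging is shorter because the isofibration statement is tailor-made for the conclusion; your version has the virtue of making explicit exactly which combinatorial lift is being performed.
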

\begin{proof}
	Indeed, denote by $\iota : A\to\ccal$ the canonical monomorphism of simplicial sets. Then by \cite[\href{https://kerodon.net/tag/01F1}{Tag 01F1}]{lurieKerodon}, the restriction
	\begin{equation}
		\iota^* : \mathrm{Fun}(\ccal,\dcal)\to\mathrm{Fun}(A,\dcal)
	\end{equation}
	is an isofibration of simplicial sets (\emph{cf.} \cite[3.3.15]{MR3931682}). The data of the $f_a$ defines an isomorphism
	$\gamma$ in $\mathrm{Fun}(A,\dcal)$ between $F_{\mid A}$ and the map of simplicial sets $A\to\dcal$ defined by $a\mapsto d_a$.
	By the lifting property of isofibrations, there is a functor $G:\ccal\to\dcal$ and an isomorphism $F\simeq G$, that is a natural equivalence $g:F\Rightarrow G$ such that $\iota^*g = \gamma$, which is exactly the statement of the proposition.
\end{proof}

To prove descent statement for categories of perverse Nori motives and mixed Hodge modules we will use the following proposition and its convenient corollary. Robin Carlier really helped the author in the writing of this proposition.

\begin{prop}
	\label{descabs}
	Let $\chi : (\ccal^\op)^{\lhd}\to \catinfty$ be a functor.
	For any $f:c\to c'$ in $(\ccal^\op)^{\lhd}$, denote $f^*=\chi(f)$,
	and assume that any such $f^*$ has a right adjoint $f_*$.
	For each $c\in\ccal$, denote $f_c:c\to v$ the unique map
	from $c$ to the end point $v$ (we have that $(\ccal^\op)^\lhd \simeq (\ccal^\rhd)^\op$). Let $\overline{\pi}:\overline{\dcal}\to (\ccal^\op)^{\lhd}$ be the
	cocartesian fibration that classifies $\chi$, and let $\pi:\dcal\to \ccal^\op$ be
	its pullback by the inclusion $\ccal^\op\to(\ccal^\op)^{\lhd}$.\\
	Then $\chi$ is a limit diagram
	if and only if the following two conditions are verified :
	\begin{enumerate}\item The family $(f_c^*)_{c\in\ccal}$ is conservative.
		\item For any cocartesian section $X:\ccal^\op\to \dcal $ of $\pi$, the limit $X(v):=\lim_{c\in\ccal}(f_c)_*X(c)$ exists in $\chi(v)$ and the map
		      $f_c^*X(v)\to X(c)$ adjoint to  the canonical map $$\lim_c (f_c)_*X(c)\to (f_c)_*X(c)$$ is an equivalence.
	\end{enumerate}
\end{prop}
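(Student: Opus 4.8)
The plan is to collapse the whole statement onto a single comparison functor. Write $\Phi\colon\chi(v)\to\lim_{c\in\ccal^\op}\chi$ for the functor induced by the cone structure of $\chi$; by definition $\chi$ is a limit diagram if and only if $\Phi$ is an equivalence. I will show that condition~(1) says exactly that $\Phi$ is conservative, and condition~(2) says exactly that $\Phi$ admits a fully faithful right adjoint. The statement then follows from the elementary fact that a conservative functor admitting a fully faithful right adjoint is an equivalence: if $(L,R)$ is an adjunction whose counit is invertible, the triangle identity $\epsilon_{L}\circ L(\eta)=\mathrm{id}_L$ forces $L(\eta)$ to be invertible, so conservativity of $L$ makes the unit $\eta$ invertible and $L$ an equivalence.

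First I would make the comparison explicit. By (un)straightening, $\lim_{c\in\ccal^\op}\chi$ is the $\infty$-category of cocartesian sections of $\pi\colon\dcal\to\ccal^\op$, and the cone maps $v\to c$ of $\overline\pi$, with their cocartesian pushforwards $f_c^*=\chi(f_c)\colon\chi(v)\to\chi(c)$, assemble the canonical comparison into $\Phi\colon M\mapsto(c\mapsto f_c^*M)$; by construction the projection $p_c\colon\lim_{c\in\ccal^\op}\chi\to\chi(c)$ satisfies $p_c\circ\Phi\simeq f_c^*$. Since projections out of a limit of $\infty$-categories are jointly conservative, a two-way chase gives $\Phi$ conservative $\iff$ the family $(f_c^*)_{c\in\ccal}$ is jointly conservative, which is condition~(1).

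Next I would turn condition~(2) into an adjunction statement. Mapping spaces in a limit of $\infty$-categories are the limit of the mapping spaces, so, using the adjunctions $(f_c^*,(f_c)_*)$, for $M\in\chi(v)$ and a cocartesian section $X$ one computes
\[
	\mathrm{Map}(\Phi M, X)\;\simeq\;\lim_{c}\mathrm{Map}_{\chi(c)}(f_c^*M,X(c))\;\simeq\;\lim_{c}\mathrm{Map}_{\chi(v)}(M,(f_c)_*X(c)).
\]
Hence, for every $X$, the limit $\lim_c(f_c)_*X(c)$ exists in $\chi(v)$ if and only if $M\mapsto\mathrm{Map}(\Phi M,X)$ is representable; so the existence clause of~(2) says precisely that $\Phi$ has a right adjoint $\Psi$, necessarily $\Psi(X)=\lim_c(f_c)_*X(c)$. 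Tracing the counit of $(\Phi,\Psi)$ through the displayed equivalences identifies its component at $c$ with the adjoint of the projection $\Psi(X)\to(f_c)_*X(c)$, i.e.\ with the map $f_c^*X(v)\to X(c)$ of~(2); and a morphism of cocartesian sections is an equivalence iff all its components are (again joint conservativity of the $p_c$), so the second clause of~(2) says precisely that this counit is invertible, i.e.\ that $\Psi$ is fully faithful.

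Putting these together finishes the proof. If $\chi$ is a limit diagram then $\Phi$ is an equivalence, hence conservative (so~(1) holds), and its inverse is a fully faithful right adjoint whose value is $\lim_c(f_c)_*X(c)$ by the Yoneda lemma (so~(2) holds); conversely, (1) and~(2) make $\Phi$ conservative with a fully faithful right adjoint, hence an equivalence by the fact recalled above, hence $\chi$ is a limit diagram. The main obstacle is the bookkeeping in the second paragraph: pinning down the identification $\lim_{\ccal^\op}\chi\simeq$ cocartesian sections of $\pi$ with the right variance and checking $p_c\circ\Phi\simeq f_c^*$ on the nose, together with the correct invocation of the two formal inputs ``mapping spaces commute with limits'' and ``objectwise right adjoints assemble into a functor''; once those are in place the argument is purely formal.
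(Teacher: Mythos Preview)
Your argument is correct and reaches the same destination as the paper, but by a more direct road. The paper invokes Kerodon's diffraction criterion (Tag~02T8): it identifies the diffraction functor $\mathrm{Df}\colon\overline{\dcal}_v\to\mathrm{Fun}^{\mathrm{Cocart}}_{/\ccal}(\ccal,\dcal)$ with your $\Phi$, but then argues through the auxiliary restriction functor $\mathrm{res}$ in the triangle $\mathrm{res}\circ\mathrm{ev}_v^{-1}\simeq\mathrm{Df}$, showing via relative right Kan extensions (Kerodon 0311, 02KY, 02ZA) that existence of the limits makes $\mathrm{res}$ fully faithful, and finally appealing to \cite[Theorem 5.17]{lurieDerivedAlgebraicGeometry2011} for the forward direction. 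You bypass all of this by reducing everything to the single equivalence ``conservative with fully faithful right adjoint $\Rightarrow$ equivalence'', and you establish the right adjoint via a bare mapping-space computation rather than Kan-extension machinery. What your route buys is self-containment: no diffraction, no DAG citation, and both directions handled symmetrically. What the paper's route buys is precision on the bookkeeping you flag in your last paragraph---the identifications $\lim_{\ccal^\op}\chi\simeq\mathrm{Fun}^{\mathrm{Cocart}}_{/\ccal^\op}(\ccal^\op,\dcal)$, $p_c\circ\Phi\simeq f_c^*$, and the naturality of the adjunction isomorphisms are packaged inside the Kerodon references rather than left as an exercise. Your caveat is well placed: the only nontrivial verification is that the transition maps in the diagram $c\mapsto(f_c)_*X(c)$ coming from the adjunction agree with those making $\lim_c\mathrm{Map}_{\chi(c)}(f_c^*M,X(c))$ into a cone, but this is routine once written out.
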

\begin{proof}
	By  \cite[\href{https://kerodon.net/tag/02T0}{Construction 02T0}]{lurieKerodon}, there is a diffraction functor
	\begin{equation}
		\mathrm{Df}:\overline{\dcal}_v\to \mathrm{Fun}_{/\mathcal{C}^\op}^\mathrm{Cocart}(\mathcal{C}^\op,\mathcal{D})
	\end{equation}
	that fits in a commutative triangle :
	\begin{equation}
		\label{diffraction}
		\begin{tikzcd}
			{\overline{\dcal}_v} && {\mathrm{Fun}_{/\mathcal{C}^\op}^\mathrm{Cocart}(\mathcal{C}^\op,\mathcal{D})} \\
			& {\mathrm{Fun}_{/(\mathcal{C}^\op)^\lhd}^\mathrm{Cocart}((\mathcal{C}^\op)^\lhd,\overline{\mathcal{D}})}
			\arrow["{\mathrm{Df}}", from=1-1, to=1-3]
			\arrow["{\mathrm{ev}_v}", from=2-2, to=1-1]
			\arrow["{\mathrm{res}}"', from=2-2, to=1-3]
		\end{tikzcd}.
	\end{equation}
	By the diffraction criterion
	\cite[\href{https://kerodon.net/tag/02T8}{Theorem 02T8}]{lurieKerodon},
	$\chi$ is a limit diagram if and only if
	the restriction morphism in \cref{diffraction} is an equivalence. In that case by
	\cite[\href{https://kerodon.net/tag/02TF}{Remark 02TF}]{lurieKerodon}, the functor $\mathrm{Df}$ is
	an equivalence.

	Assume that for any cocartesian section $X:\ccal^\op\to \dcal$
	of $\pi$, the functor $c\mapsto (f_c)_*X(c)$ admits a limit in $\overline{D}_v$. By the preservation of limits of any right adjoint in $(\ccal^\op)^\lhd$ and
	\cite[\href{https://kerodon.net/tag/0311}{Corollary 0311} and \href{https://kerodon.net/tag/02KY}{Corollary 02KY}]{lurieKerodon}, this is equivalent to saying that for any
	cocartesian section $X:\ccal^\op\to\dcal$ of $\pi$, the lifting problem
	\begin{equation}
		\begin{tikzcd}
			\ccal^\op & {\overline{\dcal}} \\
			{(\ccal^\op)^\lhd} & {(\ccal^\op)^\lhd}
			\arrow["X", from=1-1, to=1-2]
			\arrow["{\overline{\pi}}", from=1-2, to=2-2]
			\arrow["i",hookrightarrow, from=1-1, to=2-1]
			\arrow[equal, from=2-1, to=2-2]
			\arrow["{\overline{X}}", dashed, from=2-1, to=1-2]
		\end{tikzcd}
	\end{equation}
	admits a solution $\overline{X}$ which is a $\overline{\pi}$-limit (\cite[\href{https://kerodon.net/tag/02KG}{Definition 02KG}]{lurieKerodon}).
	By \cite[\href{https://kerodon.net/tag/02ZA}{Example 02ZA}]{lurieKerodon}, in that case $\overline{X}$ is the right Kan extension functor along the inclusion $i:\ccal^\op\to(\ccal^\op)^\lhd$. This easily implies
	that the restriction functor $\mathrm{res}:\mathrm{Fun}_{/(\mathcal{C}^\op)^\lhd}^\mathrm{Cocart}((\mathcal{C}^\op)^\lhd,\overline{\mathcal{D}})\to \mathrm{Fun}_{/\mathcal{C}^\op}^\mathrm{Cocart}(\mathcal{C}^\op,\mathcal{D})$ is fully faithful as the left adjoint of the right Kan extension along $i : \ccal^\op\to(\ccal^\op)^\lhd$.

	Therefore, assuming point 2. of the proposition, $\chi$ is a limit diagram if and only if the restriction functor is conservative by
	\cite[\href{https://kerodon.net/tag/03UZ}{Corollary 03UZ}]{lurieKerodon}. But as the family of evaluations $\mathrm{ev}_c : \mathrm{Fun}(\ccal^\op,\dcal)\to\dcal$ is conservative, this is equivalent to the family of functors
	$\mathrm{ev}_c\circ\mathrm{res}=\mathrm{ev}_c\circ\mathrm{Df}\circ\mathrm{ev}_v = f_c^*$ being conservative, which is point 1. of the proposition.

	By \cref{diffraction}, if $\chi$ is a limit diagram, $\mathrm{Df}$ is an equivalence which implies that 1. holds. 
	To finish the proof of the proposition, we therefore only have to show that if $\chi$ is a limit diagram, then 2. holds. 
	Let $\overline{X}$ be a cocartesian section of $\overline{\pi}$. Assume first that $\overline{X}$ is a
	$\overline{\pi}$-limit diagram. By \cite[\href{https://kerodon.net/tag/03ZA}{Example 03ZA}]{lurieKerodon} this
	is equivalent to suppose that $\overline{X}$ is the $\overline{\pi}$-right Kan extension of its restriction $X$ to $\ccal^\op$.
	Then by \cite[\href{https://kerodon.net/tag/0307}{Corollary 0307}]{lurieKerodon}, the limit of
	$c\mapsto (f_c)_*X(c)$ exists and the fact that $\overline{X}$ is
	cocartesian is equivalent to the fact that for every $c\in\ccal$, the canonical map
	\begin{equation}
		f_c^*(\lim_{c'} (f_{c'})_*X(c))\to X(c)
	\end{equation}
	is an equivalence.

	Therefore, to show that 2. holds, we only have to show that any cocartesian section $\overline{X}$ of $\overline{\pi}$ is a
	$\overline{\pi}$-limit. This is exactly what is proved in the second paragraph of the proof of \cite[Theorem 5.17]{lurieDerivedAlgebraicGeometry2011}.

\end{proof}
\begin{cor}[See also {\cite[Corollary 5.2.2.37]{lurieHigherAlgebra2022}}]
	\label{corDescabs}
	Let $\chi,\chi':(\ccal^\op)^{\lhd}\to\catinfty$ be two functors as in \Cref{descabs}, and assume one has a natural transformation
	$B:\chi\to\chi'$ which commutes with the rights adjoints.
	Suppose also that the limits of point 2. in \Cref{descabs}  exist for $\chi$, that $B_v:\chi(v)\to\chi'(v)$ preserves them and that each $B_c$ for $c\in \ccal$ is conservative.
	Then if $\chi'$ is a limit diagram, so is $\chi$.
\end{cor}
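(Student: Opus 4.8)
The plan is to verify, via the characterisation of limit diagrams in \Cref{descabs}, that $\chi$ satisfies its two conditions (1) and (2); since $\chi'$ is a limit diagram it already satisfies both. Write $f_c^{*}=\chi(f_c)$ and $g_c^{*}=\chi'(f_c)$ for the transition functors to the cone point and $(f_c)_{*},(g_c)_{*}$ for their right adjoints. Naturality of $B$ along the morphisms $f_c$ of $(\ccal^{\op})^{\lhd}$ gives commuting squares $B_c\circ f_c^{*}\simeq g_c^{*}\circ B_v$, and passing to mates --- which is precisely what ``$B$ commutes with the right adjoints'' means for these morphisms --- yields natural equivalences $(g_c)_{*}\circ B_c\simeq B_v\circ (f_c)_{*}$ for all $c\in\ccal$.

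Condition (1) for $\chi$. Let $\varphi$ be a morphism of $\chi(v)$ with $f_c^{*}(\varphi)$ invertible for every $c\in\ccal$. Then $g_c^{*}(B_v(\varphi))\simeq B_c(f_c^{*}(\varphi))$ is invertible for every $c$, so $B_v(\varphi)$ is invertible because $\chi'$ satisfies condition (1); hence $\varphi$ is invertible, $B_v$ being conservative. Thus the family $(f_c^{*})_{c\in\ccal}$ is conservative.

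Condition (2) for $\chi$. Unstraightening the natural transformation $B|_{\ccal^{\op}}\colon \chi|_{\ccal^{\op}}\Rightarrow\chi'|_{\ccal^{\op}}$ produces a functor $\widetilde{B}\colon \dcal\to\dcal'$ over $\ccal^{\op}$ which preserves cocartesian edges and restricts to $B_c$ on the fibre over each $c$. Given a cocartesian section $X\colon\ccal^{\op}\to\dcal$ of $\pi$, the composite $\widetilde{B}\circ X$ is therefore a cocartesian section of $\pi'$ with $(\widetilde{B}\circ X)(c)\simeq B_c(X(c))$. Condition (2) for $\chi'$ applied to $\widetilde{B}\circ X$, combined with the equivalences $(g_c)_{*}B_c\simeq B_v(f_c)_{*}$ of the first paragraph, says that $\lim_{c}B_v\big((f_c)_{*}X(c)\big)$ exists in $\chi'(v)$ and that the canonical maps into it identify with equivalences $g_c^{*}\big(\lim_{c}(g_c)_{*}B_c(X(c))\big)\to B_c(X(c))$. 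By the assumption that $B_v$ preserves the limits appearing in condition (2) (so that in particular the limit $L:=\lim_{c}(f_c)_{*}X(c)$ exists in $\chi(v)$), we get $B_v(L)\simeq\lim_{c}B_v((f_c)_{*}X(c))$; transporting the previous equivalences through this identification shows that $B_c\big(f_c^{*}(L)\to X(c)\big)$ is an equivalence for each $c\in\ccal$, whence $f_c^{*}(L)\to X(c)$ is an equivalence by conservativity of $B_c$. So $\chi$ satisfies condition (2), and \Cref{descabs} gives that $\chi$ is a limit diagram.

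The routine parts are the formal manipulations with conservative functors and with the mate construction. The step that needs care is the fibration-level bookkeeping for condition (2): one must check that $\widetilde{B}$ carries cocartesian sections of $\pi$ to cocartesian sections of $\pi'$ with the expected fibrewise values, and --- the real point --- that the canonical comparison map $f_c^{*}(L)\to X(c)$ of \Cref{descabs} is sent by $B_c$ to the corresponding comparison map of \Cref{descabs} for $\chi'$ applied to $\widetilde{B}\circ X$. Establishing this amounts to checking that the mate equivalences $(g_c)_{*}B_c\simeq B_v(f_c)_{*}$ are compatible with the adjunction units entering the definition of those comparison maps, after which everything follows from the universal property of the limit.
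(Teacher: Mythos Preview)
The paper gives no proof of this corollary; it is left as an immediate consequence of \Cref{descabs}. Your approach---verify conditions (1) and (2) of \Cref{descabs} for $\chi$ by transporting them from $\chi'$ via $B$---is exactly the intended one, and your treatment of condition (2), including the coherence checks you flag at the end, is correct.

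There is however a genuine issue in your verification of condition (1): you conclude that $\varphi$ is invertible ``$B_v$ being conservative'', but the stated hypotheses only require $B_c$ to be conservative for $c\in\ccal$, not for the cone point $v$. This is not a slip on your part so much as a defect in the statement: without conservativity of $B_v$ the corollary is actually false. For instance, take $\ccal=\{*\}$, let $A$ be any nontrivial $\infty$-category with a terminal object, set $\chi(v)=A\times A$, $\chi(*)=A$ with $f^*=\mathrm{pr}_1$ (right adjoint $a\mapsto(a,*_A)$), set $\chi'(v)=\chi'(*)=A$ with $g^*=\mathrm{Id}$, and let $B_v=\mathrm{pr}_1$, $B_*=\mathrm{Id}$. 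Then $\chi'$ is a limit diagram, $B$ commutes with the right adjoints, $B_*$ is conservative, and $B_v$ preserves the (one-term) limits of point 2, yet $\chi$ is not a limit diagram since $\mathrm{pr}_1$ is not conservative. The fix is to add conservativity of $B_v$ to the hypotheses; this is harmless in the paper's only use of the corollary (\Cref{etNori}), where $B$ is the $\ell$-adic realisation and is conservative at every object, cone point included.
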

\subsection{Enhancement of the $6$ functors formalism.}
\label{enhancementNM}
In this subsection, we will work with perverse Nori motives or mixed Hodge modules identically. Therefore we denote by $\mathrm{Var}_k$ 
the category of quasi-projective $k$-schemes when dealing with perverse Nori motives or the
category of separated and reduced finite type $\C$-schemes when dealing with mixed Hodge structures. We will call elements 
of $\mathrm{Var}_k$ varieties or $k$-varieties. For $X\in\mathrm{Var}_k$, we will denote by $\Mp(X)$ the category of perverse Nori motives constructed in \cite{ivorraFourOperationsPerverse2022}
or the category of Saito's mixed Hodge modules constructed in \cite{MR1047415}. We will have to use realisation functors 
hence we denote by $R: \Dd^b(\Mp(X))\to \Dd^b_c(X)$ either the Betti realisation of perverse Nori motives when $k\subset\C$, the 
underlying $\Q$-structure functor of mixed Hodge modules, or the $\ell$-adic realisation of perverse Nori motives. Here $\Dd^b_c(X)$ is the derived category of bounded cohomologically constructible sheaves on $X^\mathrm{an}$ in the two 
first cases, and the derived category of constructible $\ell$-adic étale sheaves on $X$. The triangulated category $\Dd^b_c(X)$ is endowed with a perverse and a constructible t-structures and the functor $\mathrm{rat}$ is t-exact (for both t-structures) and conservative. 

We begin with a recollection on the $\infty$-categorical enhancement of derived categories of constructible sheaves.

\begin{prop}
	There exists a functor 
	$$\dcal_c^b\colon\mathrm{Var}_k^\op \to \mathrm{CAlg}(\catinfty)$$
	such that for each map $f:Y\to X$ in $\mathrm{Var}_k$, the symmetric monoidal induced by $\dcal_c^b(f)$ on the homotopy categories is the classical pullback of constructible sheaves.
\end{prop}
\begin{proof}
	In both analytic sheaves an étale sheaves, for $X\in\mathrm{Var}_k$, the category $\Dd^b_c(X)$ is embedded in a bigger category $\dcal(X)$ which is of the form $\dcal(\mathrm{Shv}(\mathfrak{X},\oscr))$ with $\mathfrak{X}$ a topos and $\oscr$ a sheaf of rings on $\mathfrak{X}$ and is naturally a symmetric monoidal stable $\infty$-category. Indeed, in the case of analytic sheaves one can take $\mathfrak{X}$ to be the categories of analytic opens of the complex points of $X$ and $\oscr$ is the constant sheaf $\Q_X$. In the case of $\ell$-adic sheaves, following \cite{MR4609461} one can take $\mathfrak{X}$ to be the proétale topos of $X$, and $\oscr$ to be the sheaf of rings induced by the condensed ring $\Q_\ell = \Z_\ell[1/\ell]$. This construction has a canonical lift to the world of symmetric monoidal $\infty$-categories thanks to \cite[Remark 2.1.0.5]{lurieSpectralAlgebraicGeometry}. As the constructions $X\mapsto \mathfrak{X}$ and $\mathfrak{X}\mapsto \dcal(\mathrm{Shv}(\mathfrak{X},\oscr))$ are functorial, we obtain a functor 
	$$\dcal:\mathrm{Var}_k^\op \to\mathrm{CAlg}(\catinfty).$$
	Now as for each $f$ in $\mathrm{Var}_k$, the functor $f^*\colon \dcal(Y)\to\dcal(X)$ preserves constructible objects, this induces a functor
	$$\dcal_c^b:\mathrm{Var}_k^\op \to\mathrm{CAlg}(\catinfty)$$
	which gives the usual pullback on homotopy categories by definition in the case of analytic sheaves, and by \cite[Theorem 7.7]{MR4609461} for $\ell$-adic sheaves.

\end{proof}

\begin{constr}
	\label{constr1}
	We have a symmetric monoidal homotopy $2$-functor 
	$$\Dd^b(\Mp(-)):\mathrm{Var}_k^\op \to \mathrm{CAlg}(\mathrm{Cat}_1).$$
	By restricting to the constructible hearts, this induces a functor $$\Mc(-):\mathrm{Var}_k^\op \to \mathrm{CAlg}(\mathrm{Cat}_1)$$ such that the tensor product is exact in each variable and the pullback functors are exact (this can be checked after realisation). By applying \Cref{symMonoCatDeri} we obtain a functor 
	$$\dcal^b(\Mc(-)):\mathrm{Var}_k^\op \to \mathrm{CAlg}(\catinfty).$$
	Moreover, the equivalence \Cref{cccp} lifts to an equivalence of $\infty$-categories thanks to \Cref{equivHo}. The \Cref{replacement} then gives a functor 
	$$\dcal^b(\Mp(-)):\mathrm{Var}_k^\op \to \mathrm{CAlg}(\catinfty),$$ canonically equivalent to $\dcal^b(\Mc(-))$.
\end{constr}
\begin{prop}
	\label{samepb}
	Let $f:Y\to X$ be a map in $\mathrm{Var}_k$. Then the functor induced by 
	$\dcal^b(\Mc(f))$ on the homotopy categories is the pullback functor constructed by Ivorra and Morel or by Saito, depending on which case we are.
\end{prop}
\begin{proof}
	We claim that if we know that the pullback functors constructed by Ivorra, Morel and Saito admit $\infty$-categorical lifts $f^*$, then they will be canonically equivalent to $f^*_\mathrm{new} := \dcal^b(\Mc(f))$. Indeed in that case the $\infty$-functors $f^*$ and $f^*_\mathrm{new}$ coincide on the constructible heart by definition, hence they are equivalent by \Cref{RestrToHeart}.

	We prove this first for mixed Hodge modules: For any map $f:Y\to X$ of varieties, there is the factorisation 
	$$Y \xrightarrow{i} X\times Y \xrightarrow{p} X$$ where $i$ is the closed immersion given by the inclusion of the graph of $f$ (our varieties are separated) and $p$ is the projection. By definition, we have $f^*\simeq p^*\circ i^*$. The functor $p^*$ is given by the external product with the constant object: $p^*(M)=M\boxtimes \Q_Y$, hence is an $\infty$-functor because the external product is defined as the derived functor on the perverse hearts. The functor $i^*$ is the left adjoint of the functor $i_*$, which is obtained (\cite[(4.2.4) and (4.2.10)]{MR1047415}) by deriving the functor $i_*$ on the perverse heart and is therefore an $\infty$-functor. Thus by \Cref{adjointHo} the functor $i^*$ is an $\infty$-functor. Finally $f^*$ admits an $\infty$-categorical lift.

	Now for perverse Nori motives, any map $f:Y\to X$ being a map of quasi-projective varieties, factors as 
	$$Y\xrightarrow{i} Z\xrightarrow{g} X$$ with $i$ a closed immersion and $g$ a smooth map. Hence $f^*\simeq i^*g^*$. The case of $i^*$ is the same as for mixed Hodge modules, and thus we have an $\infty$-functor. For the smooth map $g$, we can assume that $X$ is connected (as a direct sum of $\infty$-functors is an $\infty$-functor), so that $g$ is smooth of pure relative dimension $d$ for some $d\in\N$. Now by definition $f^* = (f^*[d])[-d]$ where $f^*[d]$ is the derived functor of the exact functor $f^*[d]$ on the perverse heart, hence we have an $\infty$-functor.
\end{proof}
\begin{cor}
Let $f:Y\to X$ be a map in $\mathrm{Var}_k^\op$, then the $\infty$-functor 
$$f^*:\dcal^b(\Mp(X))\to \dcal^b(\Mp(Y))$$ admits a right adjoint $f_*$. If $f$ is moreover smooth, $f^*$ admits a left adjoint $f_\sharp$.
\end{cor}
\begin{proof}
	These adjoints exist on the homotopy categories, thus they have $\infty$-categorical lifts by \Cref{adjointHo}.
\end{proof}
\begin{prop}
	For each $X\in\mathrm{Var}_k$, the monoidal structure obtained in \Cref{constr1} induces on the homotopy categories the same monoidal structure as the one constructed by M. Saito for mixed Hodge modules and by Terenzi (\cite{terenziTensorStructurePerverse2024}) for perverse Nori motives.

	Moreover, the projection formula for a smooth morphism holds in $ \dcal^b(\Mp(X))$.
\end{prop}
\begin{proof}
	Note that in both \cite{terenziTensorStructurePerverse2024} and \cite{MR1047415} the tensor product is built in the following way	: there is
	an external tensor product $\boxtimes : \Mp(X)\times\Mp(X)\to\Mp(X\times X)$ that is exact in both variable hence induces by \Cref{CurryDb} an $\infty$-functor $\dcal^b(\Mp(X))\times \dcal^b(\Mp(X))\to \dcal^b(\Mp(X\times X))$
	which we compose with the pullback $\Delta^*$ under the diagonal $\Delta :X\to X\times X$ to obtain the tensor
	product, thus given by $M\otimes N = \Delta^*(M\boxtimes N)$. The other structure morphism of the monoidal structure are obtained similarly. Denote by $\otimes^T :  \dcal^b(\Mp(X))\times \dcal^b(\Mp(X))\to \dcal^b(\Mp(X))$ Saito's or Terenzi's tensor product and
	denote by $\otimes^\infty :  \dcal^b(\Mp(X))\times \dcal^b(\Mp(X))\to \dcal^b(\Mp(X))$ the one we just constructed.
	We want to check that $\mathrm{ho}(\otimes^\infty)=\mathrm{ho}(\otimes^T)$. For this there are several isomorphisms to check (see \cite[Section XI.1]{MR1712872}) :
	\begin{enumerate}
		\item A functorial isomorphism $\otimes^T\Rightarrow \otimes^\infty$.
		\item Modulo point 1. isomorphism, an identification of the unit isomorphism $\rho:M\otimes 1\to M$ and $\lambda:1\otimes M\to M$.
		\item Modulo point 1. isomorphism, an identification of the associativity isomorphism $c : M\otimes(N\otimes P)\to (M\otimes N)\otimes P$.
		\item Modulo point 1. isomorphism, an identification of the commutativity isomorphism $\gamma : M\otimes N\to N\otimes M$.
	\end{enumerate}
	By definition, $\otimes^\infty$ coincides with $\otimes^T$ when restricted to $\Mc(X)\times\Mc(X)$. By \Cref{CurryDb} this gives 1. We explain how to obtain 3. and the other checks will be left to the reader as they are very similar.
	Denote by $t_1^\infty : \dcal^b(\Mp(X))\times\dcal^b(\Mp(X))\times\dcal^b(\Mp(X))\to\dcal^b(\Mp(X))$ the functor sending $M,N,P$ to $M\otimes^\infty (N\otimes^\infty P)$ and by
	$t_2^\infty : \dcal^b(\Mp(X))\times\dcal^b(\Mp(X))\times\dcal^b(\Mp(X))\to\dcal^b(\Mp(X))$ the functor sending $M,N,P$ to $(M\otimes^\infty N)\otimes^\infty P$. The associativity isomorphism
	is a natural equivalence $c^\infty:t_1^\infty\Rightarrow t_2^\infty$. We give the same definition for $t_1^T$ , $t_2^T$ and $c^T$.
	Again by definition, the image by the restriction functor of the morphisms $c^\infty$ and $c^T$ are the same, hence up to the equivalence of 1., we obtain 3.

	For the projection formula, the arrow exists by a play of adjunctions, and is an equivalence on the homotopy categories.
\end{proof}
\begin{cor}
	Let $X$ be a variety. The tensor structure on $ \dcal^b(\Mp(X))$ is closed.
\end{cor}
\begin{proof}
	Again this is true on the homotopy categories and we can apply \Cref{adjointHo}.
\end{proof}

\begin{prop}
	Let $X$ be a variety. The realisation functor 
	$$R:\Dd^b(\Mp(X))\to \Dd^b_c(X)$$ admits a canonical lift to a map in $\mathrm{CAlg}(\catinfty)$. 

	Moreover, it induces a natural transformation $$\dcal^b(\Mp(-))\Rightarrow \dcal^b_c(-)$$ on functors 
	$$\mathrm{Var}_k^\op \to \mathrm{CAlg}(\catinfty).$$
\end{prop}
\begin{proof}
	In the case of the Betti realisation this is easy as we have an equivalence 
	$$\dcal^b(\mathrm{Cons}(X))\simeq \dcal_c^b(X)$$ thanks to Nori's theorem. This enables us to consider the functor $\Delta^1\times\mathrm{Var}_k^\op\to\mathrm{CAlg}(\mathrm{Cat}_1)$ that send $(0,f)$ to $f^*:\Mc(Y)\to \Mc(X)$, the edge $(1,f)$ to $f^*:\mathrm{Cons}(Y)\to\mathrm{Cons}(X)$ and $(0\to 1,X)$ to the realisation functor. Therefore by \Cref{symMonoCatDeri} we obtain the result.

	For the $\ell$-adic realisation it is not true that the canonical functor 
	$$\mathrm{real}_X:\dcal^b(\mathrm{Cons}(X,\Q_\ell))\to \dcal^b_c(X,\Q_\ell)$$ is an equivalence. What we did in the case of the Betti realisation gives a symmetric monoidal natural transformation $$\dcal^b(\Mc(-))\Rightarrow \dcal^b(\mathrm{Cons}(-,\Q_\ell)).$$ 
	Hence we have to check that the $\mathrm{real}_X$ assemble to give a symmetric monoidal natural transformation $$\dcal^b(\mathrm{Cons}(-,\Q_\ell))\Rightarrow\dcal^b_c(-).$$ This is \Cref{SymMonoReal}.
\end{proof}

\begin{cor}
The realisation functor $$\rcal \colon \dcal^b(\Mp(-))\to\dcal^b_c(-)$$ commutes with all the operations constructed above. This includes: pullbacks $f^*$, pushforwards $f_*$, left adjoints $f_\sharp$ when $f$ is a smooth map, tensor product $-\otimes-$ and internal homomorphism $\sHom(-,-)$.
\end{cor}
\begin{proof}
	Indeed, this is true on the homotopy categories, and the exchange morphisms exist because $\rcal$ commutes with pullbacks and tensor products by definition.
\end{proof}

Verdier duality also lifts:
\begin{prop}
	Let $X$ be a $k$-variety. Then the Verdier duality functor $\mathbb{D}_X$ has a $\infty$-categorical lift $\mathbb{D}_X\colon \dcal^b(\Mp(X))\to\dcal^b(\Mp(X))^\op$ which commutes with the Betti realisation. Moreover, there is a canonical isomorphism $\mathbb{D}_X\circ\mathbb{D}_X\simeq \mathrm{Id}_{\dcal^b(\Mp(X))}$.
\end{prop}
\begin{proof}
	Indeed the functor $\mathbb{D}_X$ both in Ivorra-Morel and in Saito, is constructed as the trivial derived functor of the exact functor $\mathbb{D}_X\colon \Mp(X)\to\Mp(X)^\op$ thus it is an $\infty$-functor. Thus the functor also commutes with the Betti realisation as an $\infty$-functor (as  in fact we apply the functor $\dcal^b(-)$ to the commutative square involving Verdier duality of $\Mp(X)$, $\mathrm{Perv}(X)$ and $R$). The fact that $\mathbb{D}_X\circ\mathbb{D}_X\simeq \mathrm{Id}_{\dcal^b_\mcal(X)}$ holds can be checked on the homotopy category as well.
\end{proof}

For free, we also obtain the exceptional functoriality:
\begin{cor}
	\label{excfuncto}
	The exceptional functors $f^!$ and $f_!$ constructed by Ivorra, Morel and Saito have $\infty$-categorical lifts that assemble to give functors 
	$$\Sch_k^{(\op)}\to\catinfty.$$ Moreover Verdier duality is a natural isomorphism between the star functoriality ($f_*$ or $f^*$) and the shriek functoriality of the opposite category ($f_!$ or $f^!$).
\end{cor}
\begin{proof}
	The last statement of this corollary gives a construction. Indeed the formula $\mathbb{D}\circ f_* \simeq f_!\circ\mathbb{D}$ tells us that if we apply \Cref{replacement} to the isomorphisms $\mathbb{D}_X : \dcal^b(\Mp(X))\to\dcal^b(\Mp(X))^\op$, the obtained functor $\Sch^{(\op)}\to\catinfty$ encode exactly the exceptional functoriality.
\end{proof}

\begin{thm}
	\label{etNori}
	The functor $X\mapsto \dcal^b(\Mp(X))$, with respect to the $(-)^*$-functoriality, is an hypersheaf with respect to $h$-topology.
\end{thm}
\begin{proof}
	Denote by $\dnori(X):=\dcal^b(\Mp(X))$.
	By \cite[Corollary 3.3.38]{MR3971240} (or see the proof of \cite[Proposition 5.11]{MR4278670}), it suffices to prove that this functor is an étale hypersheaf because of localisation and proper
	base change hold for $\dnori$. We will use \Cref{corDescabs} with a realisation functor (either the $\ell$-adic realisation for Nori motives, or the underlying $\Q$-structure functor for mixed Hodge modules) that we will denote by $$B\colon \dnori \to\Dd^b_c.$$ Note that $\Dd^b_c$ has étale hyper descent. Now, to apply \Cref{corDescabs} we need to make sure that the limits ensuring descent exist in $\dnori$. This is not obvious. However for any variety $X$ and $m\in\N$, if we denote by $\dcal^{[-m,m]}_\mcal(X)$ the full subcategory of $\dnori(X)$ of objects concentrated, for the constructible t-structure, in degrees $[-m,m]$, we have 
	\[\bigcup_m \dcal^{[-m,m]}_\mcal(X) = \dnori(X).\]
	 Moreover, the functor $\dnori$ is a hyper sheaf if and only if for each $m$, the functor $\dcal^{[-m,m]}_\mcal$ is a hyper sheaf, and the same holds for $\D^b_c$. Indeed, if $\dnori$ is a hyper sheaf, then as being concentrated in degrees $[-m,m]$ can be tested after pulling back by any surjective map of schemes, it turns out that each $\dcal^{[-m,m]}_\mcal$ is also a hyper sheaf. Conversely, if each $\dcal^{[-m,m]}_\mcal$ is a hyper sheaf, then because pullback functors $f^*$ are t-exact functors, in turns out that the conditions to test that $\dnori$ is a hypersheaf only takes place in $\dcal^{[-m,m]}_\mcal$: if $U_\bullet\to X$ is a hyper covering, fully faithfulness of the sheaf condition is computed in $\dcal^{[-m,m]}_\mcal(X)$ for some $m$, and then essential surjectivity holds because given an element $K=(K_n)_n$ of the limit there exists a fixed $m$ such that $K$ has its $K_0$ hence also all its factors $K_n$ in $\dcal^{[-m,m]}_\mcal(U_n)$.

	 Thus because $\Dd^{[-m,m]}_c$ is an étale hyper sheaf, it suffices to check the conditions of \Cref{corDescabs} for the realisation functor 
	 $$B\colon \dcal^{[-m,m]}_\mcal\to\Dd^{[-m,m]}_c.$$ Let $f_\bullet\colon Y_\bullet\to X$ be an étale hyper covering. As the realisation functor is conservative,  it suffices to check that:
	 \begin{enumerate}
	   \item For each complex $M$ in $\dcal^{[-m,m]}_\mcal(X)$, the limit \[\lim_\Delta \tau^{\leqslant m}(f_n)_*f_n^*M\] exists, where $\Delta$ is the simplicial category and $\tau^{\leqslant m}$ is the truncation functor for the t-structure.
	   \item The realisation functor \[\dcal_\mcal^{[-m,m]}(X)\to\Dd_c^{[-m,m]}(X)\] commutes with such a limit.
	 \end{enumerate}
	 In fact as $\dcal^{[-m,m]}_\mcal(X)$ is a $(2m+1)$-category, the above totalisation in (1) is finite thus exists (because the finite limit $\lim (f_n)_*f_n^*M$ exists in $\dcal_\mcal^{\geqslant -m}$ because it exists in $\dcal^b_\mcal$, and then we apply the right adjoint $\tau^{\leqslant m}$), and the realisation functor clearly commutes with such a finite limit because the realisation functor
	 \[\dcal^{\geqslant -m}_\mcal(X)\to\Dd_c^{\geqslant -m}(X)\] commutes with finite limits (the inclusion $\Dd^{\geqslant -m}\to\Dd^b$ commutes with limits) and the image by the realisation functor of the above limit in (1) is the truncation by $\tau^{\leqslant m}$, which is a right adjoint, of a limit in $\Dd^{\geqslant -m}_c(X)$.

\end{proof}

\section{The realisation functor and applications.}
\label{Sectionrealisation}

\subsection{Construction of the realisation functor.}
\label{realisationSection}
In this section, we construct realisation functors from the category of étale motives to the derived categories of mixed Hodge modules and perverse Nori motives that commute with the 6 operations.  Denote by $\Sch_k$ the category of finite type $k$-schemes.
We use Drew and Gallauer' result in \cite{MR4560376} in which they prove that the stable motivic $\A^1$-invariant $\infty$-category $\mathcal{SH}$ has a universal property among systems of $\infty$-categories called \emph{coefficient systems}: these are functors 
$$\Sch_k^\op \to \mathrm{CAlg}(\catinfty)$$ that have the minimal functoriality needed for the 6 operations to exist thanks to Ayoub's thesis (see \Cref{CoeffSysAppe} for a precise definition and some results on coefficient systems). Any coefficient system $C\in\mathrm{CoSys}_k$ with values in idempotent complete $\infty$-categories receives an unique realisation functor $$\mathcal{SH}_c\to C$$ from the compact objects of the motivic homotopy category that commutes with enough operations to ensures that it commutes with the 6 operations (if $C$ has some good properties, see below).

\begin{thm}
	\label{NoriPB}
	Denote by $\dcal_\mcal^b$ the functor $\mathrm{Var}_k^\op \to\mathrm{CAlg}(\mathrm{\catinfty})$ of \Cref{constr1}. The functor $\dcal_\mcal^b$ defines an object of $\mathrm{CoSys}_k$.
\end{thm}
\begin{proof}
	This is the content of \Cref{enhancementNM} and \Cref{extQProj}. See \Cref{defCoSys} for the precise definition of coefficient systems.
\end{proof}

The above theorem contains the statement that perverse Nori motives and mixed Hodge modules extend to every finite type $k$-scheme, together with all the operations and $h$-hyperdescent (the only non trivial statement here is the h-hyperdescent, but this follows from the fact that the Kan extension of an hypersheaf on a basis of a site to the full site is still an hypersheaf, see for example \cite[Theorem A.6]{MR4549105}).

\begin{rem}
	Using that the mapping spectra in $\dcal^b_\mcal$, $\dcal^b(\Mp)$ and $\dcal^b(\Mc)$ are étale sheaves (for the perverse heart, one has to use affine étale morphisms) and the fact that a limit of a diagram of stable $\infty$-categories with $t$-structures and $t$-exact exact transitions functors is endowed with a $t$-structure (see \cite[Lemma 3.2.18]{MR4061978}), one can show that after extension to finite type $k$-schemes, we still have $$\dcal^b_\mcal(X)\simeq \dcal^b(\Mc(X))\simeq \dcal^b(\Mp(X))$$ and in the case of perverse Nori motives, the category $\Mp(X)$ is the universal category as for quasi-projective schemes. The weights also extend to that case.
\end{rem}

\begin{nota}
	For $X$ a finite type $k$-scheme, we will denote by $\dcal^b(\mathrm{MHM}(X))$ (\emph{resp.} by $\dcal^b(\Mp(X))$) the value at $X$ of the functor $\dnori$ in the mixed Hodge modules case (\emph{resp.} in the Nori motives case).
\end{nota}

Before going into the proof of the main theorem, we recall a construction originally due to Fabien Morel, and we will use \cite[Section 16.2]{MR3971240} as a reference. 
Let $X$ be a finite type $k$-scheme. There is a canonical decomposition $$\mathcal{SH}_\Q(X)\simeq \mathcal{SH}_\Q^+(X)\times \mathcal{SH}_\Q^-(X)$$ of the rational part of the motivic homotopy category $\mathcal{SH}(X)$ over $X$. It comes from the decomposition in $\mathcal{SH}_\Q(X)$ of the unit $\Q_X$ as a direct sum 
\[\Q_X\simeq \Q_X^+\oplus \Q_X^-.\]
Moreover the object $\Q_X^+$ has a commutative algebra structure (because it is idempotent) in $\mathcal{SH}(X)_\Q$ such that the projection map $\Q_X\to\Q_X^+$ is a map of algebras. By \cite[Theorem 16.2.13]{MR3971240}, there is a canonical identification 
of $\mathcal{SH}_\Q^+(X)\simeq \mathrm{Mod}_{\Q^+}(\mathcal{SH}_\Q(X))$ 
with étale motives $\mathcal{DM}^{\mathrm{\'et}}(X)$. 
In particular, a symmetric monoidal colimit preserving $\infty$-functor 
\[F\colon \mathcal{SH}_\Q(X)\to \dcal\] to a stable presentably symmetric monoidal $\infty$-category $\dcal$ factors through $\mathcal{DM}^{\mathrm{\'et}}(X)$ if and only if $F(\Q^-_X)=0$. 
By \cite[Corollary 16.2.14]{MR3971240} the object $\Q^-_X\in\mathcal{SH}_\Q(X)$ vanishes whenever $-1$ is a sum of squares in the residual fields of $X$. 
We will need the following lemma:

\begin{lem}
	\label{LemmaBrad}
	The maps $\mathcal{SH}_\Q(X)\to\mathcal{DM}^{\mathrm{\'et}}(X)$ and 
	$\mathcal{SH}(X)\to\mathcal{SH}_\Q(X)$ are part of morphisms of coefficient systems $\mathcal{SH}_\Q\to \mathcal{DM}^{\mathrm{\'et}}$ and 
	$\mathcal{SH}\to\mathcal{SH}_\Q$.
\end{lem}
\begin{proof}
	This is a particular case of a theorem of Brad Drew (although in the second case of rationalisation a simpler proof could have been given see \cite[Lemma A.11]{MR3205601}): \cite[Theorem 8.10]{drewMotivicHodgeModules2018}, where for example in the first case we take (with his notations) $\mathcal{V} = \mathcal{SH}_\Q(\mathrm{Spec}(k))$, $\mathcal{M}^*=\mathcal{SH}_\Q$ and $A = \Q^+_{\mathrm{Spec}(k)}$.
\end{proof}

\begin{thm}
	\label{realNori}
	There exists an (essentially unique) morphism of coefficients systems $\mathcal{DM}^{\mathrm{\'et}}\to \Ind \dnori$ on the category $\Sch_k$. It restricts to
	a functor  \begin{equation}\label{reali}\nu:\mathcal{DM}^{\mathrm{\'et}}_c\to \dnori.\end{equation}
	The functor \cref{reali} commutes with the $6$ operations, that is all the exchange transformations are isomorphisms. 
\end{thm}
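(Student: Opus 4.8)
The plan is to obtain the comparison map from the universal property of Voevodsky motives and then to transfer compatibility with the operations from the general theory of six-functor formalisms. By \Cref{NoriPB}, the functor $\Ind\dnori$ is an object of $\mathrm{PB}^{\mathrm{pr},\mathrm{st}}_{(\acute{\mathrm{e}}\mathrm{t},\A^1),(\mathbb{P}^1,\infty),\Q}$, and by \Cref{univDA} the functor $\mathcal{DM}$ is the initial object of that $\infty$-category. Hence the space of morphisms of pullback formalisms $\mathcal{DM}\to\Ind\dnori$ is contractible; let $\mu$ be the essentially unique such morphism. In particular each $\mu_X\colon\mathcal{DM}(X)\to\Ind\dnori(X)$ is a colimit-preserving symmetric monoidal exact functor sending the unit to the unit, and $\mu$ commutes with $f^*$ for every $f$ and with $f_\sharp$ for $f$ smooth.

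Next I would check that $\mu$ restricts to constructible objects. Recall that $\mathcal{DM}_c(X)$ is the idempotent-complete stable subcategory of $\mathcal{DM}(X)$ generated by the objects $p_\sharp p^*\Q_X(n)$ with $p\colon Y\to X$ smooth and $n\in\Z$. Since $\mu_X$ is exact, symmetric monoidal and compatible with $p_\sharp$ and $p^*$, one gets $\mu_X(p_\sharp p^*\Q_X(n))\simeq p_\sharp p^*\Q_X(n)$, which lies in $\dnori(X)$ by \Cref{sharp} (the Tate twist being constructible, e.g. since $\Q(1)[2]$ is a retract of the motive of $\mathbb{P}^1_X$). As $\dnori(X)$ is an idempotent-complete stable, hence thick, subcategory of $\Ind\dnori(X)$, the exact functor $\mu_X$ sends the whole of $\mathcal{DM}_c(X)$ into $\dnori(X)$. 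This yields the restricted natural transformation $\nu\colon\mathcal{DM}_c\to\dnori$ of \cref{reali}, again essentially unique.

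For the compatibility with the six operations I would appeal to the comparison theorem of Cisinski and Déglise. By construction $\nu$ commutes with $f^*$, with $\otimes$, and with $f_\sharp$ for $f$ smooth; both $\mathcal{DM}_c$ and $\dnori$ underlie six-functor formalisms over $\Sch_k$ (the one on $\dnori$ being the enhancement built in \Cref{section2} and extended in \Cref{extension}), so \cite[Theorem 4.4.25]{MR3971240} forces $\nu$ to commute with $f_*$, $f_!$, $f^!$ and internal $\sHom$ as well, i.e.\ all exchange transformations are invertible. One can also argue more concretely, by post-composing with the conservative $\ell$-adic realisation $R_\ell$ of \Cref{elltf}: since $R_\ell\circ\nu$ coincides with the $\ell$-adic realisation $\rho_\ell$ of étale motives, which commutes with the six operations, and since $R_\ell$ commutes with the operations and is conservative (\Cref{extension}), the exchange transformations for $\nu$ become isomorphisms after applying $R_\ell$ and hence are already isomorphisms.

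I expect the main obstacle to be this last step. The formal part — existence of $\mu$ via \Cref{univDA} and \Cref{NoriPB}, and its restriction to constructible objects — is routine, but applying \cite[Theorem 4.4.25]{MR3971240}, or equivalently identifying $R_\ell\circ\nu$ with $\rho_\ell$, requires knowing that the six operations on $\dnori$ constructed in this paper are \emph{the} operations singled out by the axioms of a motivic six-functor formalism (localisation, smooth and proper base change, projection formula), so that the abstract uniqueness statements genuinely apply to them. Once this identification is granted, the proof is complete.
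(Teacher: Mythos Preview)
Your overall strategy matches the paper's: use \Cref{univDA} and \Cref{NoriPB} to obtain the morphism of pullback formalisms, restrict to constructible objects, and then invoke \cite[Theorem 4.4.25]{MR3971240}. The first two steps are correct and essentially identical to the paper's argument.

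The gap is exactly the one you flag at the end, and the paper resolves it in a way you do not anticipate. You apply Cisinski--D\'eglise's theorem directly to $\nu:\mathcal{DM}_c\to\dnori$, but the hypotheses of that theorem require the target to be $\tau$-generated (i.e.\ generated by smooth motives and Tate twists), and $\dnori(X)$ is \emph{not} $\tau$-generated in general. The paper therefore introduces an intermediate subcategory $\dcal^{\mathrm{gm}}_\mcal(X)\subset\dnori(X)$, defined as the thick subcategory spanned by the $f_\sharp\Q_Y(i)$ for $f$ smooth; this is $\tau$-generated by construction, and one applies \cite[Theorem 4.4.25]{MR3971240} to $\nu^{\mathrm{gm}}:\mathcal{DM}_c\to\dcal^{\mathrm{gm}}_\mcal$. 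At this point $\dcal^{\mathrm{gm}}_\mcal$ carries \emph{two} sets of six operations: the ones produced by Cisinski--D\'eglise's machinery, and the restrictions of the operations on $\dnori$ built in \Cref{section2} and \Cref{extension}. The paper then checks by hand that these agree (same $f^*$ hence same $f_*$; $j_\sharp=j_!$ for open immersions and $p_*=p_!$ for proper $p$ give the same $f_!$ via Nagata; adjunctions handle $f^!$ and internal $\sHom$), and only then concludes that $\nu$ commutes with the operations on $\dnori$.

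Your alternative route via conservativity of $R_\ell$ runs into the same circularity you suspect: identifying $R_\ell\circ\nu$ with $\rho_\ell$ again needs some uniqueness statement about six-functor formalisms on the Nori side, so it does not bypass the issue. The missing ingredient is precisely the passage through $\dcal^{\mathrm{gm}}_\mcal$ together with the explicit comparison of the two candidate operation packages.
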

In the case of mixed Hodge modules we will denote by $\mathrm{Hdg}^*$ the realisation functor, and in the case of Nori motives we will denote it by $\mathrm{Nor}^*$.
\begin{proof}
	Thanks to \Cref{NoriPB} we know that $\dcal^b_\mcal$ is an object of $\mathrm{CoSys}_k$, hence its indization $\Ind\dcal^b_\mcal$ is an object of $\mathrm{CoSys}_k^\mathrm{c}$. Together with \Cref{univSh}, this gives a morphism of coefficient systems $$\rho_{\mathcal{SH}}\colon\mathcal{SH}\to \Ind\dcal^b_\mcal.$$ 

	Note that the functor $\Ind\dcal^b_\mcal$ naturally takes values in $\mathrm{CAlg}(\mathrm{Pr}^L)_{\backslash \Ind\dcal^b_\mcal(\Spec(k))}$ the co-slice $\infty$-category, because $\Spec(k)$ is the initial object of $\Sch_k^\op$. Using the functor $\rho_{\mathcal{SH}}$ over $\Spec(k)$ we see that $\Ind\dcal^b_\mcal$ takes values in $\mathrm{CAlg}(\mathrm{Pr}^L)_{\backslash \mathcal{SH}(\Spec(k))}$ the $\infty$-category of $\mathcal{SH}(\Spec(k))$-algebras in $\mathrm{Pr}^L$, where we endowed $\mathrm{Pr}^L$ with the Lurie tensor product constructed in \cite[Section 4.8.1]{lurieHigherAlgebra2022}. We claim that $\rho_{\mathcal{SH},k}\colon\mathcal{SH}(\Spec(k))\to\Ind\dnori(\Spec(k))$ factors through $\mathcal{DM}^{\'et}(\Spec(k))$. 

	First, note that because $\Ind\dnori(\Spec(k))$ is $\Q$-linear, the functor $\rho_{\mathcal{SH},k}$ factors through $\mathcal{SH}_\Q(\Spec(k))$ to give a symmetric monoidal functor 
	\[\rho_{\mathcal{SH}_\Q,k}\colon \mathcal{SH}_\Q(\Spec(k))\to\Ind\dnori(\Spec(k)).\] Moreover by the universal property of the localisation, if $K$ is a finite extension of $k$ in which we added a square root of $-1$, the square \[ \begin{tikzcd}
		\mathcal{SH}_\Q(\Spec(k)) \arrow[r,"\rho_{\mathcal{SH}_\Q,k}"] \arrow[d,"g^*"] 
		  & \Ind\dnori(\Spec(k)) \arrow[d,"g^*"] \\
		\mathcal{SH}_\Q(\Spec(K)) \arrow[r,"\rho_{\mathcal{SH}_\Q,K}"]
		  & \Ind\dnori(\Spec(K))
	\end{tikzcd}\] commutes, where $g\colon \Spec(K)\to\Spec(k)$ it the structural morphism. 
	Now because $\Q^-_{\Spec(k)}$ is a direct factor of $\Q_{\Spec(k)}$, its image under $\rho_{\mathcal{SH}_\Q,k}$ lands in $\dnori(\Spec(k))$. 
	By étale descent of $\dnori$ proven in \Cref{etNori} the functor $g^*$ is conservative on $\dnori(\Spec(k))$, and as $\Q^-_{\Spec(K)}=0$, we see that the image of $\Q^-_{\Spec(k)}$ in $\Ind\dnori(\Spec(k))$ vanishes. 
	By the discussion above \Cref{LemmaBrad} this proves that the functor $\rho_{\mathcal{SH}_\Q,k}$ factors through $\mathcal{DM}^\mathrm{\'et}(\Spec(k))$ to give a functor that we will denote by $\widetilde{\nu_k}$. 
	Through $\widetilde{\nu_k}$ we see that our functor $\Ind\dnori$ takes values in $\mathcal{DM}^{\mathrm{\'et}}(\Spec(k))$-algebras in $\mathrm{Pr}^L$. 
		By \cite[Remark 7.3.2.13]{lurieHigherAlgebra2022} applied to the left adjoint functor \[-\otimes_{\mathcal{SH}(\Spec(k))}\mathcal{DM}^{\mathrm{\acute{e}t}}(\Spec(k))\colon \mathrm{Mod}_{\mathcal{SH}(\Spec(k))}(\mathrm{Pr}^L)\to\mathrm{Mod}_{\mathcal{DM}^{\mathrm{\acute{e}t}}(\Spec(k))}(\mathrm{Pr}^L),\] the forgetful functor \[\mathrm{ff}\colon\mathrm{CAlg}(\mathrm{Pr}^L)_{\backslash \mathcal{DM}^\mathrm{\acute{e}t}(\Spec(k))}\to \mathrm{CAlg}(\mathrm{Pr}^L)_{\backslash \mathcal{SH}(\Spec(k))}\] is right adjoint to the functor 
	\[-\otimes_{\mathcal{SH}(\Spec(k))}\mathcal{DM}^\mathrm{\acute{e}t}(\Spec(k)).\]
	Applying this adjunction term-wise we obtain by \cite[Theorem 6.1.22]{MR3931682} an adjunction between functors on $\Sch^\op_k$. In particular, the natural transformation $\rho_{\mathcal{SH}}$ may be seen as a map 
	$\mathcal{SH}\to \mathrm{ff}(\Ind\dnori)$ in the $\infty$-category 
	$$ \mathrm{Fun}(\Sch_k^\op,\mathrm{CAlg}(\mathrm{Pr}^L)_{\backslash \mathcal{SH}(\Spec(k))}).$$ By adjunction, we obtain a natural transformation
	\[\mathcal{SH}\otimes_{\mathcal{SH}(\Spec(k))}\mathcal{DM}^\mathrm{\'et}(\Spec(k))\to \Ind\dnori\] of functors on $\Sch_k^\op$ and with values in $\mathrm{CAlg}(\mathrm{Pr}^L)_{\backslash \mathcal{DM}^\mathrm{\'et}(\Spec(k))}$. The left hand sides sends a scheme $X$ to the $\infty$-category 
	\[\mathcal{SH}(X)\otimes_{\mathcal{SH}(\Spec(k))}\mathcal{DM}^{\mathrm{\'et}}{(\Spec(k))}.\] By using that $$\mathcal{DM}^{\mathrm{\'et}}(\Spec(k))\simeq \mathrm{Mod}_{\Q^+_k}(\mathcal{SH}(\Spec(k)))$$ and Lurie's \cite[Theorem 4.8.4.6]{lurieHigherAlgebra2022} we see that
	$$\mathcal{SH}(X)\otimes_{\mathcal{SH}(\Spec(k))}\mathcal{DM}^{\mathrm{\'et}}(\Spec(k))\simeq \mathrm{Mod}_{\Q^+_X}(\mathcal{SH}(X))\simeq\mathcal{DM}^\mathrm{\'et}(X).$$ This proves that $\rho_\mathcal{SH}$ factors through the functor $\mathcal{DM}^\mathrm{\'et}$, giving us a natural transformation
	 \[\widetilde{\nu}\colon\mathcal{DM}^\mathrm{\'et}\to\Ind\dnori\] of functors on $\Sch^\op_k$ with values in $\mathrm{CAlg}(\mathrm{Pr}^L)$. The factorisation is given by the unit of the adjunction between functor categories described above.

	We claim that we obtained a morphism of coefficient systems. Let $f\colon X\to Y$ be a smooth map. Because $\mathcal{DM}^\mathrm{\'et}(X)$ is generated under colimits by objects of the form $a_\mathrm{\'et}^\Q(M)$ with $M\in\mathcal{SH}(X)$ and $a_\mathrm{\'et}^\Q$ the morphism $\mathcal{SH}\to\mathcal{DM}^\mathrm{\'et}$, it suffices to prove that the natural exchange morphism 
	\[f_\sharp(\widetilde{\nu}(a_\mathrm{\'et}(M)))\to \widetilde{\nu}(f_\sharp^{\mathcal{DM}^\mathrm{\'et}}(a_\mathrm{\'et}(M)))\] is an equivalence. This follows from the following sequence of equivalences:
	\begin{eqnarray*}
		f_\sharp(\widetilde{\nu}(a_\mathrm{\'et}(M))) & \simeq & f_\sharp (\rho(M))\\ 
		& \overset{(1)}{\simeq} & \rho(f_\sharp^{\mathcal{SH}}(M))\\
		& \simeq & \widetilde{\nu}(a_\mathrm{\'et}^\Q(f_\sharp^\mathcal{SH}(M)))\\
		&\overset{(2)}{\simeq} & \widetilde{\nu}(f_\sharp^{\mathcal{DM}^\mathrm{\'et}}(a_\mathrm{\'et}^\Q(M)))
		,\end{eqnarray*} where we used in (1) the fact that $\rho_{\mathcal{SH}}$ is a morphism of coefficient systems and in (2) the fact that $a_\mathrm{\'et}^\Q$ is a morphism of coefficient systems by \Cref{LemmaBrad}.

	Denote by $\mathcal{DM}^{\mathrm{\'et}}_c(X)$ (\emph{resp.} by $\dcal^{\mathrm{gm}}_\mcal(X)$)
	the thick full subcategory
	of $\mathcal{DM}^{\mathrm{\'et}}(X)$ (\emph{resp.} of $\Ind\dnori(X)$) spanned by the $f_\sharp \Q_Y(i)$ for $f:Y\to X$ smooth
	and $i\in \Z$. As $\dnori(X)\subset \Ind\dnori(X)$ is thick and each $f_\sharp \Q_Y(i)$ is in $\dnori(X)$, we have $\dcal^{\mathrm{gm}}_\mcal(X)\subset \dnori(X)$.
	Because $\widetilde{\nu}$ is a morphism of coefficient systems, it induces a morphism of coefficient systems
	\begin{equation}
		\nu^{\mathrm{gm}}:\mathcal{DM}^{\mathrm{\'et}}_c\to \dcal^{\mathrm{gm}}_\mcal.
	\end{equation}
	In particular, at the level of triangulated categories, we obtain a premotivic morphism that verifies all the hypotheses of
	\cite[Theorem 4.4.25]{MR3971240} : $\mathcal{D}^{\mathrm{gm}}_\mcal$ is $\tau$-generated for $\tau$ the Tate twist,
	$\mathcal{DM}^{\mathrm{\'et}}_c$ is  $\tau$-dualisable by absolute purity, $\mathcal{D}^{\mathrm{gm}}_\mcal$ is separated
	because it is a sub pullback formalism of $\dnori$ which satisfy étale descent by \Cref{etNori} (which is equivalent to separateness in the $\Q$-linear case by \cite[Corollairy 3.3.34]{MR3971240}) and
	all Tate twists are invertible in $\mathcal{DM}^{\mathrm{\'et}}_c$ by construction.

	Therefore, the functor $\nu^{\mathrm{gm}}$ commutes with the 6 operations. One has to be a little careful here: a priori, \cite[Theorem 4.4.25]{MR3971240}
	implies only that $\nu^{\mathrm{gm}}$ commutes with all the operations when $\dcal^{\mathrm{gm}}_\mcal$ is endowed with the operations they
	construct in their book, and not necessarily with the constructions of Saito, Ivorra, Morel as in \Cref{excfuncto}. We know that for $f:Y\to X$ a morphism, $f^*$ is the same for both constructions, hence by adjunction this is also the case for $f_*$.
	For $j:U\to X$ an open immersion the $j_\sharp$ are the same, but $j_\sharp = j_!$ hence by Nagata compactification (and the fact that for a proper $p$, we have $p_*\simeq p_!$) and composition
	both $f_!$ are isomorphic for any $f:Y\to X$ separated. By adjunction we also have an isomorphism between the $f^!$. This defines a isomorphism of étale sheaves of $\infty$-categories, hence we also have compatibility of operations for non separated morphisms (when they are defined for $\mathcal{DM}^{\mathrm{\'et}}$).
	The tensor product is the same, hence also the internal $\sHom$ by adjunction.

	We have obtained that the functor
	\begin{equation}
		\nu : \mathcal{DM}^{\mathrm{\'et}}_c\to \dnori
	\end{equation}
	commute with the 6 operations at the triangulated categories level. But all exchange morphisms exist at the level of $\infty$-categories,
	and the functor from an $\infty$-category to its homotopy category is conservative, hence $\nu$ commutes with all the operations.
\end{proof}
\begin{cor}
	The realisation functor $$\mathcal{SH}_c\to \dnori$$ commutes with the 6 operations.
\end{cor}
\begin{proof}
	Indeed it suffices to check that the functor $\mathcal{SH}_c\to\mathcal{DM}^{\mathrm{\'et}}_c$ commutes with the operations. This is the content of the next \Cref{lemmaSHop}. 
\end{proof}
This lemma is probably well known to experts but the author could not find a reference.
\begin{lem}
	\label{lemmaSHop}
	The morphisms $\rho_\Q\colon\mathcal{SH}_c\to\mathcal{SH}_{\Q,c}$ and $a_{\mathrm{\'et}}\colon\mathcal{SH}_{\Q,c}\to\mathcal{DM}^\mathrm{\'et}_c$ commute with the 6 operations.
\end{lem}
\begin{proof}
	The case of $\rho_\Q$ follows from \cite[Théorème A.15]{MR3205601}, thus we focus on $a_{\mathrm{\'et}}$.
	We know that $a_{\mathrm{\'et}}$ is a morphism of coefficient systems generated by objects of the form $g_\sharp\mathbf{1}_Y(n)$ for $g\colon Y\to X$ a smooth map and $n\in\Z$, thus to prove that it commutes with the 6 operations it suffices to check that it commutes with pushforwards (see the proof of \cite[Theorem 4.4.25]{MR3971240}). Let $f\colon Y\to X$ be a morphism of schemes.
	As the functor $a_{\mathrm{\'et}}$ commutes with $f^*$ its right adjoint $\mathrm{ff}^{\mathrm{\'et}}$ commutes with $f_*$, and because $\mathrm{ff}^{\mathrm{\'et}}$ is conservative, it suffices to check that the natural transformation \[\mathrm{ff}^{\mathrm{\'et}}\circ a_{\mathrm{\'et}}\circ f_*\to f_*\circ\mathrm{ff}^{\mathrm{\'et}}\circ a_{\mathrm{\'et}}\] is an equivalence. Moreover by \cite[Corollary 4.2.4.8]{lurieHigherAlgebra2022} there is a canonical identification of functors \[\mathrm{ff}^{\mathrm{\'et}}\circ a_{\mathrm{\'et}} \simeq -\otimes {\Q^+}.\] Thus we have to prove that the natural map
	\begin{equation}\ f_*(-)\otimes \Q^+ \to f_*(-\otimes \Q^+)\end{equation} is an equivalence.
	This is the case because as $\Q^+$ is a direct factor of $\Q$, it is a dualisable object of $\mathcal{SH}_\Q$ so that we can apply \cite[Lemma 2.8]{MR3259031}, using that $\Q^+_Y\simeq f^*\Q^+_X$.
\end{proof}
\begin{cor}
	There exists a Hodge realisation functor $R_H:\dcal^b(\Mp)\to\dcal^b(\mathrm{MHM})$ compatible with the six operations. It factors the Hodge realisation that we have constructed in \Cref{realNori}.
\end{cor}
\begin{proof}
	By \Cref{realNori}, there exists a Hodge realisation from $\mathcal{DM}^{\mathrm{\'et}}_c(-,\Q)$ to the derived category of mixed Hodge modules. Therefore, taking the underlying $\Q$-structure and then perverse $\HHp^0$,
	we obtain a factorisation of the perverse $\HHp^0$ of the Betti realisation through $\mathrm{MHM}(X)$. The universal property of perverse Nori motives then gives a faithful exact functor $\Mp(X)\to \mathrm{MHM}(X)$,
	which induces a functor $\dcal^b(\Mp(X))\to \dcal^b(\mathrm{MHM}(X))$. Now all the constructions of \cite{ivorraFourOperationsPerverse2022} are the constructions of the operations for mixed Hodge modules,
	hence this functor is compatible with the operations. The fact that this functor factors the Hodge realisation we constructed above follows from the universal property of $\mathcal{DM}^{\mathrm{\'et}}_c$ and the fact that $R_H$ can be promoted to a natural transformation of $\infty$-functors on $\Sch_k^\op$, using that it is also the derived functor of its restriction to the constructible heart.
\end{proof}
Recall that by \cite[Corollary 6.18]{ivorraFourOperationsPerverse2022}, \cite[Proposition 2.3.1]{bondarkoWeightsTstructuresGeneral2011} and \cite[Theorem 3.3]{MR2834728}
the categories $\dcal^b(\Mp(X))$, $\dcal^b(\mathrm{MHM}(X))$ and $\mathcal{DM}^{\mathrm{\'et}}_c(X)$ admit  weight structures (for the last one, note that Beilinson motives and étale motives are equivalent by \cite[Theorem 16.1.2]{MR3971240}).
\begin{prop}
	\label{weightexact}
	Let $X$ be a finite type $k$-scheme. Then the functors \[R_H:\dcal^b(\Mp(X))\to\dcal^b(\mathrm{MHM}(X)),\]
	\[\mathrm{Nor}^*:\mathcal{DM}^{\mathrm{\'et}}_c(X)\to\dcal^b(\Mp(X))\]
	and
	\[\mathrm{Hdg}^*:\mathcal{DM}^{\mathrm{\'et}}_c(X)\to\dcal^b(\mathrm{MHM}(X))\]
	are compatible with weights.
\end{prop}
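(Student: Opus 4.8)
The plan is to reduce the weight-exactness of all three functors to statements that can be checked on generators, using the construction of the weight structures recalled just before the proposition and the fact that weight structures on the source are generated (under extensions, shifts and retracts) by particularly simple objects.

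First I would recall how the bounded weight structures on $\mathcal{DM}_c(X)$, $\dcal^b(\Mp(X))$ and $\dcal^b(\mathrm{MHM}^p(X))$ are built: by \cite{MR2834728}, \cite{bondarkoWeightsTstructuresGeneral2011} and \cite[Corollary 6.18]{ivorraFourOperationsPerverse2022} the heart of the weight structure on $\mathcal{DM}_c(X)$ is generated, under retracts and finite coproducts, by the objects $f_*\Q_Y(n)[2n]$ (equivalently $f_!\Q_Y(n)[2n]$) for $f:Y\to X$ proper with $Y$ smooth, and $n\in\Z$; dually the coheart consists of the objects $f_!\Q_Y(n)$. Since a functor between stable $\infty$-categories with bounded weight structures is weight-exact as soon as it sends the heart into the heart (equivalently the weight-nonnegative and weight-nonpositive parts into the respective parts, and for a functor preserving the relevant generating objects this follows because the weight-nonnegative/nonpositive subcategories are the smallest ones stable under extensions, positive/negative shifts and retracts containing these generators). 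So it suffices to prove:
\begin{enumerate}
\item $\nu(f_*\Q_Y(n)[2n])$ lies in the weight-$0$ part of $\dcal^b(\Mp(X))$ for $f:Y\to X$ proper, $Y$ smooth;
\item $R_H$ sends the generators of the weight structure on $\dcal^b(\Mp(X))$ described in \cite[Corollary 6.18]{ivorraFourOperationsPerverse2022} to the weight-$0$ part of $\dcal^b(\mathrm{MHM}^p(X))$;
\end{enumerate}
and then $\mathrm{Hdg}^*\simeq R_H\circ\nu$ is weight-exact as a composite of weight-exact functors.

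For step (1) I would use \Cref{realNori}: $\nu$ commutes with the six operations, so $\nu(f_*\Q_Y(n)[2n])\simeq f_*\nu(\Q_Y)(n)[2n]\simeq f_*\Q_Y(n)[2n]$ in $\dcal^b(\Mp(Y))\to\dcal^b(\Mp(X))$, where I use $\nu(\Q_Y)\simeq\Q_Y$ (the realisation is unital) and $\nu$ commutes with Tate twists and shifts. But $f_*\Q_Y(n)[2n]$ for $f$ proper and $Y$ smooth is, by construction in \cite[Corollary 6.18]{ivorraFourOperationsPerverse2022}, precisely (a generator of) the heart of the weight structure on $\dcal^b(\Mp(X))$ — this is the motivic analogue of the fact that for mixed Hodge modules or $\ell$-adic sheaves the objects $f_*\Q_Y(n)[2n]$ are pure of weight $0$ when $Y$ is smooth and $f$ proper. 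For step (2), the functor $R_H:\dcal^b(\Mp(X))\to\dcal^b(\mathrm{MHM}^p(X))$ also commutes with the six operations (previous corollary), hence sends $f_*\Q_Y(n)[2n]$ to $f_*\Q_Y^{H}(n)[2n]$, which is pure of weight $0$ in $\dcal^b(\mathrm{MHM}^p(X))$ by Saito's theory (\cite[Section 4]{MR1047415}); since these objects generate the weight-$0$ part on the source, $R_H$ is weight-exact. (One checks similarly that these functors send the weight-nonnegative and weight-nonpositive generators to the respective parts, but by the above this is automatic.)

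The main obstacle I expect is not conceptual but bookkeeping: one must make sure that the weight structure on $\dcal^b(\Mp(X))$ of \cite[Corollary 6.18]{ivorraFourOperationsPerverse2022}, the one on $\mathcal{DM}_c(X)$ coming from Bondarko's Chow weight structure (which lives a priori on Beilinson motives, whence the parenthetical remark about \cite[Theorem 16.1.2]{MR3971240}), and the one on $\dcal^b(\mathrm{MHM}^p(X))$, are all normalised so that the unit objects and the $f_*\Q_Y(n)[2n]$ sit in weight $0$ with the same conventions; once the generators are matched up on the nose, weight-exactness is formal from compatibility with the six operations. A secondary point is to justify that ``preserves the generating objects of the heart'' implies ``weight-exact'': this is standard for bounded weight structures (the weight-nonnegative, resp. nonpositive, subcategory is the smallest subcategory containing the shifts $[\geqslant 0]$, resp. $[\leqslant 0]$, of the heart and stable under extensions and retracts, and an exact functor sending heart to heart therefore respects both), so I would simply cite Bondarko's foundational results on weight structures for this.
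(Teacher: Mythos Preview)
Your argument for $\nu$ and for $\mathrm{Hdg}^*$ is essentially the paper's: since the Chow weight structure on $\mathcal{DM}_c(X)$ is bounded with heart generated by the $f_*\Q_Y(n)[2n]$ for $f$ proper and $Y$ smooth, and since both functors commute with the six operations, it suffices to check that these generators land in the weight-$0$ part of the target. The paper does exactly this, invoking Deligne/Morel for the $\ell$-adic side (which detects weights on $\dcal^b(\Mp(X))$) and Saito for the Hodge side.

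The gap is in your treatment of $R_H$. You assert that the objects $f_*\Q_Y(n)[2n]$ generate the weight-$0$ part of $\dcal^b(\Mp(X))$, but this is not how that weight structure is built: the references \cite[Corollary 6.18]{ivorraFourOperationsPerverse2022} and \cite[Proposition 2.3.1]{bondarkoWeightsTstructuresGeneral2011} construct it from the weight \emph{filtration} on the abelian category $\Mp(X)$, not from Chow generators. Its heart therefore consists of complexes whose perverse cohomologies are pure, and there is no a priori reason why every pure object of $\Mp(X)$ should lie in the thick subcategory generated by the Chow-type objects (that would essentially be a surjectivity statement for $\nu$ at the level of pure motives). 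Consequently, knowing that $R_H$ sends the Chow generators to weight-$0$ objects does not imply that $R_H$ preserves the full heart, and your deduction of weight-exactness for $R_H$ does not go through. For the same reason, your strategy of obtaining $\mathrm{Hdg}^*$ as the composite $R_H\circ\nu$ is circular here: you need $R_H$ first, but the only way to get it is to already know something about $\mathrm{Hdg}^*$.

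The paper reverses the logic: it proves $\mathrm{Hdg}^*$ weight-exact directly on the Chow generators of $\mathcal{DM}_c(X)$, and then handles $R_H$ by a different argument. Using that pure complexes in $\dcal^b(\Mp(X))$ split as sums of shifts of pure objects of $\Mp(X)$ (\cite[Corollary 6.27]{ivorraFourOperationsPerverse2022}), it reduces to showing $R_H$ preserves purity on $\Mp(X)$. For this it uses the universal factorisation $\mathcal{DM}_c(X)\xrightarrow{H_u}\Mp(X)\xrightarrow{R_H}\mathrm{MHM}^p(X)$: every $M\in\Mp(X)$ is a quotient of some $H_u(N)$, and if $M$ is pure of weight $w$ it is a direct summand of the weight-$w$ graded piece of $H_u(N)$; since $\mathrm{Hdg}^*$ is already known to be weight-exact, $R_H(M)$ is then a direct summand of a weight-$w$ mixed Hodge module. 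This extra step, exploiting that pure Nori motives are summands of objects of ``geometric origin'', is precisely what is missing from your proposal.
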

\begin{proof}
	First note that by definition of the weights on $\dcal^b(\Mp(X))$, the $\ell$-adic realisation $R_\ell :\dcal^b(\Mp(X))\to\dcal^b_c(X,\Q_\ell)$ is compatible with weights and reflects weights (\emph{i.e.} if a complex $K\in\dcal^b(\Mp(X))$ satisfies that $R_\ell(K)$ is of weights $\leqslant w$, then $K$ is of weights $\leqslant w$).
	As all weight filtrations and structures considered are bounded, it suffices to show that our functors map weight zero objects to weight zero objects. Let us first consider functors with source $\mathcal{DM}^{\mathrm{\'et}}_c(X)$.
	For those, is suffices to check that the image of the relative motive of a smooth proper $X$-scheme is pure of weight $0$. For the $\ell$-adic realisation this is \cite{MR0601520} and \cite{morelMixedAdicComplexes}. Therefore, the functor $\mathrm{Nor}^* : \mathcal{DM}^{\mathrm{\'et}}_c(X)\to\dcal^b(\Mp(X))$ is also weight exact.
	The fact that $\mathrm{Hdg}^*$ is exact is the fact that the mixed Hodge modules associated to a smooth proper $X$-scheme is pure of weight $0$, see \cite[Theorem 6.7 and Proposition 6.6]{saitoFormalismeMixedSheaves2006}.

	Now is only left $R_H : \dcal^b(\Mp(X))\to\dcal^b(\mathrm{MHM}(X))$. By \cite[Corollary 6.27]{ivorraFourOperationsPerverse2022}, any pure complex of perverse Nori motives is the direct sum
	of shifts of its $\HHp^i$, hence to check that $R_H$ is weight exact it suffices to show that it sends pure objects of $\Mp(X)$ to pure objects of $\mathrm{MHM}(X)$. By construction of $R_H$,
	we have a commutative diagram:\begin{equation}
		\begin{tikzcd}
			{\mathcal{DM}^{\mathrm{\'et}}_c(X)} & {\Mp(X)} & {\mathrm{MHM}(X)}
			\arrow["{\HH_u}", from=1-1, to=1-2]
			\arrow["{R_H}", from=1-2, to=1-3]
			\arrow["{\HHp^0\mathrm{Hdg}^*}"', from=1-1, to=1-3, bend left=30]
		\end{tikzcd}.
	\end{equation}
	With $H_u$ the universal functor defining $\Mp(X)$.
	But by \cite[Corollary 6.27]{ivorraFourOperationsPerverse2022}, $H_u$ sends pure objects to pure objects, and any motive $M\in\Mp(X)$ is a quotient of some $\HH_u(N)$ for $N\in \mathcal{DM}^{\mathrm{\'et}}_c(X)$.
	Assume that $M$ is pure of some weight $w$. Then $M$ is a direct factor of a quotient of the weight filtration of $\HH^0(N)$. As $\mathrm{Hdg}^*$ is weight exact, it follows that $R_H(M)$ is a direct factor of a weight $w$ mixed Hodge module, hence $R_H$ is weight exact.
\end{proof}

\begin{rem}
	Our construction of the realisation functor $\mathcal{DM}^{\mathrm{\'et}}_c(X)$ is very similar to that of Ivorra in \cite{MR3518311} (which works for smooth varieties).
	Indeed, given a smooth variety $X$, in both definitions one first gives the image of the smooth $X$-schemes in $\dcal^b(\Mp(X))$. To be more precise, Ivorra only gives the image of smooth affine $X$-schemes and then extend to smooth $X$-schemes by colimits but
	as the functor $(f:Y\to X)\mapsto f_!f^!\Q_X$ is a Nisnevich cosheaf, its value on smooth $X$-schemes is indeed determined by its value on smooth affine $X$-schemes. Once one
	has the value of the functor on smooth $X$-schemes, the rest of the construction is just applying the universal property of
	$\mathcal{DM}^{\mathrm{\'et}}_c(X)$ :
	one has to check that the functor on smooth $X$-schemes is an $\A^1$-invariant
	étale
	sheaf and
	$\mathbb{P}^1$-stable. Therefore,
	it is easy to see that if both definitions on smooth affine schemes (ours with the $6$-operations and Ivorra's using cellular complexes and Beilinson basic lemma) coincide, then our functor $\mathcal{DM}^{\mathrm{\'et}}_c(X)\to\dcal^b(\Mp(X))$ and Ivorra's are equivalent. In particular, this would prove that Ivorra's construction is compatible with the Betti realisation constructed by
	Ayoub on Voevodsky motives, hence that the category of perverse Nori motives as in \cite{ivorraFourOperationsPerverse2022} is
	equivalent as the diagram category of relative pairs studied in \cite{MR3723805} and \cite[Section 2.10]{ivorraFourOperationsPerverse2022}.

	This can be reduced to check a simple fact, that the author did not manage to do: in \cite[Proposition 4.15]{MR3518311} Ivorra proves that for a given smooth affine map $f:Y\to X$ with $X$ a smooth variety, there is a specific
	cellular stratification $Y^\bullet$ of $Y$ such that the cellular complex induced on $Y$ is an $\HH^0f_!$-acyclic resolution of $f^!\Q_X$, hence that there is an isomorphism in
	$\dcal^b(\Mp(X))$ between Ivorra's functor at $Y$ and the homology of $Y$, that is $\mathrm{ra}^\Mp_X(Y,Y_\bullet)\simeq f_!f^!\Q_X$. This isomorphism induces a canonical and functorial isomorphism on each $\HHp^i$. The question is whether one can construct such an isomorphism in a functorial way,
	as this would give the wanted isomorphism of functors. This does not seem easy, and it is unlikely that one can arrange those specific stratification so that they are compatible with a given morphism of smooth affine $X$-schemes.
\end{rem}

\subsection{Nori motives as modules in étale motives.}

\label{continuitySection}
\begin{nota}
	Recall that to be coherent with the notations of \cite{ayoubAnabelianPresentationMotivic2022}, we will denote by $\mathrm{Nor}^*$ the realisation functor $$\mathcal{DM}^{\mathrm{\'et}}\to\mathcal{DN}$$ where we have denoted by $\mathcal{DN}$ the indization of the bounded derived category of perverse motives. We will denote by $\mathcal{DN}_c$ its full subcategory of compact objects.
\end{nota}

By construction, the functor $\mathrm{Nor}^*$ preserves colimits, hence it has a right adjoint $\mathrm{Nor}_*:\mathcal{DN}\to \mathcal{DM}^{\mathrm{\'et}}$. 
Denote by \begin{equation}\label{defN}\nscr_k = \mathrm{Nor}_*\mathrm{Nor}^*\Q_{\Spec k}\in\mathcal{DM}^{\mathrm{\'et}}(\Spec k,\Q).\end{equation}This is a $\mathbb{E}_\infty$-ring object in $\mathcal{DM}^{\mathrm{\'et}}(k)$. For $X$ a finite type $k$-scheme, denote by $\nscr_{\mid X}\in\mathcal{DM}^{\mathrm{\'et}}(X)$ the restriction of $\nscr_k$ to $X$.
As in Ayoub's \cite[Construction 1.91]{ayoubAnabelianPresentationMotivic2022}, $\mathrm{Nor}^*$ factors
canonically through the functor \begin{equation}\mathrm{Mod}_\nscr(\mathcal{DM}^{\mathrm{\'et}}) : X\mapsto \mathrm{Mod}_{\nscr_{\mid X}}(\mathcal{DM}^{\mathrm{\'et}}(X,\Q))\end{equation}
as
\begin{equation}
	\label{factoModN}
	\mathrm{Nor}^* : \mathcal{DM}^{\mathrm{\'et}}\xrightarrow{\nscr\otimes -} \mathrm{Mod}_\nscr(\mathcal{DM}^{\mathrm{\'et}}) \xrightarrow{\widetilde{\mathrm{Nor}^*}} \mathcal{DN}
.\end{equation}

\begin{lem}
	\label{fullFaithNor}
	Let $k$ be a field of characteristic zero and let $\pi_X:X\to\Spec k$ be a finite type $k$-scheme. The functor 
	\[\widetilde{\mathrm{Nor}^*}:\mathrm{Mod}_{\pi_X^*\nscr_k}(\mathcal{DM}^{\mathrm{\'et}}(X))\to \mathcal{DN}(X)\]
	is fully faithful. Moreover he natural map $\pi_X^*\mathrm{Nor}_*\Q_{\Spec k}\to\mathrm{Nor}_*\Q_X$ 
	is an equivalence.
\end{lem}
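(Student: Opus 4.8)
The plan is to follow Ayoub's strategy for the Betti realisation. First I would record that the two formulations are equivalent: restricting the unit of the adjunction $(\tilde{\mathrm{Nor}^*},\tilde{\mathrm{Nor}_*})$ to the rank‑one free module $\pi_X^*\nscr_k$ shows that full faithfulness of $\tilde{\mathrm{Nor}^*}$ forces the underlying map $\pi_X^*\mathrm{Nor}_*\Q_{\Spec k}\to\mathrm{Nor}_*\Q_X$ to be an equivalence; conversely the argument below only uses this equivalence over \emph{smooth} $k$-schemes, which is exactly Step 1. So I would aim to prove full faithfulness.

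\emph{The smooth case.} For $W$ smooth over $k$ the structure map $\pi_W\colon W\to\Spec k$ is smooth, hence $\mathrm{Nor}^*$ commutes with $\pi_{W\sharp}$ by \Cref{realNori} (it is a morphism of pullback formalisms). Passing to right adjoints this gives that $\mathrm{Nor}_*$ commutes with $\pi_W^*$, and evaluating at the unit $\mathrm{Nor}^*\Q_{\Spec k}$ yields $\pi_X^*\nscr_k$ restricted to $W$, i.e. $\pi_W^*\nscr_k\simeq\mathrm{Nor}_*\mathrm{Nor}^*\Q_W=\nscr_W$. It is worth noting why this cannot be bootstrapped naïvely to all $X$: taking right adjoints of the coherences of $\mathrm{Nor}^*$ gives that $\mathrm{Nor}_*$ commutes with $f_*$ for all $f$, with $f^*$ only for $f$ smooth, and with $i^!$ for $i$ a closed immersion, but \emph{not} with the functors $f_\sharp$ or $f_!$.

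\emph{Reduction to generators and the unit.} Since $\mathrm{Nor}^*$ preserves compact objects (\Cref{realNori}), the functor $\mathrm{Nor}_*$ — and hence $\tilde{\mathrm{Nor}_*}$, whose underlying functor $\mathcal{DN}(X)\to\mathcal{DM}(X)$ is $\mathrm{Nor}_*$ — preserves colimits; $\tilde{\mathrm{Nor}^*}$ does too (base change along an algebra map composed with $\mathrm{Nor}^*$), so full faithfulness is equivalent to the unit $\mathrm{id}\to\tilde{\mathrm{Nor}_*}\tilde{\mathrm{Nor}^*}$ being an equivalence on a set of compact generators of $\mathrm{Mod}_{\pi_X^*\nscr_k}(\mathcal{DM}(X))$. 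Here I would invoke resolution of singularities: $\mathcal{DM}_c(X)$ is generated as a thick subcategory by the objects $g_*\Q_W(n)$ with $g\colon W\to X$ proper, $W$ smooth over $k$ and $n\in\Z$ (these are compact, as $g_*=g_!$ preserves constructibility), so the free modules $\pi_X^*\nscr_k\otimes g_*\Q_W(n)$ generate. On such a free module the underlying map of the unit is the natural comparison $\pi_X^*\nscr_k\otimes g_*\Q_W(n)\to\mathrm{Nor}_*\mathrm{Nor}^*(g_*\Q_W(n))$. I would compute its target using that $\mathrm{Nor}^*$ commutes with $g_*$ and with Tate twists and that $\mathrm{Nor}_*$ commutes with $g_*$ and with twists by invertible (hence dualisable) objects, obtaining $g_*\nscr_W(n)\simeq g_*g^*\pi_X^*\nscr_k(n)$ by the smooth case; and its source via the projection formula for the proper morphism $g$, obtaining $g_*(g^*\pi_X^*\nscr_k\otimes g^*\Q_X(n))=g_*g^*\pi_X^*\nscr_k(n)$. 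A compatibility check then identifies these and shows the unit is an equivalence at every generator, whence $\tilde{\mathrm{Nor}^*}$ is fully faithful and, by the rank‑one case, the second formulation holds.

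I expect the main obstacle to be precisely the asymmetry isolated in the smooth case: because $\mathrm{Nor}_*$ does not commute with the $f_\sharp/f_!$ functoriality appearing in the tautological generators $f_\sharp\Q_Y(n)$ of $\mathcal{DM}(X)$, one is forced to pass through resolution of singularities to the proper‑pushforward generators $g_*\Q_W(n)$, where the proper projection formula is available; and then one must keep careful track of exactly which comparison morphisms are being asserted to be equivalences, so that the final compatibility check is a genuine verification and not a concealed appeal to the statement being proved.
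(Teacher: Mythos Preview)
Your proof is correct but follows a different route from the paper's. You first settle the smooth case directly via $\pi_{W\sharp}$-compatibility of $\mathrm{Nor}^*$ (passing to right adjoints), then invoke resolution of singularities to generate $\mathcal{DM}_c(X)$ by $g_*\Q_W(n)$ with $g$ proper and $W$ smooth over $k$, and finish with the proper projection formula together with commutation of $\mathrm{Nor}_*$ with $g_*$ and Tate twists. The paper instead never isolates the smooth case and never appeals to resolution: it uses the key fact that compact objects of $\mathcal{DM}(\Spec k)$ are dualisable, so $\nscr_k$ is a filtered colimit of dualisable objects; this makes $\sHom(C,-\otimes\pi_X^*\nscr_k)\simeq\sHom(C,-)\otimes\pi_X^*\nscr_k$ for compact $C$, and then the projection formula for $(\pi_X)_*$ reduces everything to the statement over $\Spec k$, where the dualisable projection formula for $(\mathrm{Nor}^*,\mathrm{Nor}_*)$ applies. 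Your approach is more geometric and perhaps more transparent about where the equivalence ``comes from'' (pushing forward the smooth case), at the cost of resolution of singularities; the paper's approach is more formal and avoids resolution, relying instead on the special feature of motives over a field that constructible $=$ dualisable.
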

\begin{proof}
	The proof adds nothing new to \cite[Remark 1.97]{ayoubAnabelianPresentationMotivic2022}. 
	Denote by $\widetilde{\mathrm{Nor}_*}$ the right adjoint of $\widetilde{\mathrm{Nor}^*}$. Assume first that $X = \Spec k$. We have to show that the unit map $$\mathrm{Id}\to \widetilde{\mathrm{Nor}_*}\widetilde{\mathrm{Nor}^*}$$ is an equivalence. 	
	By \cite[Lemma 2.6]{MR3789830}, the $\infty$-category $\mathrm{Mor}_{\nscr_k}(\mathcal{DM}^{\mathrm{\'et}}(k))$ is compactly generated by the $f_\sharp\Q_Y(i)\otimes\nscr_k$ for $f:Y\to \Spec k$ smooth and $i\in \Z$. As $\widetilde{\mathrm{Nor}^*}$ of those compact generators are compact objects of $\mathcal{DN}(k)$, the right adjoint $\widetilde{\mathrm{Nor}_*}$ preserves colimits. Thus it suffices to check that for any compact object $C\otimes\nscr_k$ of the above form, the map 
	$$C\otimes\nscr_k\to \widetilde{\mathrm{Nor}_*}\widetilde{\mathrm{Nor}^*}(C\otimes\nscr_k)\simeq \widetilde{\mathrm{Nor}_*}\mathrm{Nor}^*C $$ is an equivalence. As the forgetful functor $\mathrm{Mor}_{\nscr_k}(\mathcal{DM}^{\mathrm{\'et}}(k))\to \mathcal{DM}^{\mathrm{\'et}}(k)$ is conservative it is even sufficient to prove that 
	$$C\otimes\nscr_k\to \mathrm{Nor}_*\mathrm{Nor}^*C$$
	is an equivalence in $\mathcal{DM}^{\mathrm{\'et}}(k)$. This is true because over a field, compact objects in $\mathcal{DM}^{\mathrm{\'et}}(k)$ are dualisable and for dualisable objects this map is an equivalence by \cite[Lemma 2.8]{MR3259031}.
	
	Now for a general $X$ we use that the functor $\widetilde{\mathrm{Nor}^*}$ commutes with the $6$ operations (this follows from the fact that $\nscr_X$ is a filtered colimit of dualisable objects thus the projection formulae of the form $f_*(M)\otimes \nscr \simeq f_*(M\otimes \nscr)$ are true by \cite[Lemma 2.8]{MR3259031}, and this implies commutation with all the operations as in the proof of \cite[Theorem 4.4.2.5 ]{MR3971240}). For compact $M,N\in \mathrm{Mod}_{\pi_X^*\nscr_k}(\mathcal{DM}^{\mathrm{\'et}}(X))$ we have:
	\begin{eqnarray*}
		\mathrm{map}_{\mathrm{Mod}_{\pi_X^*\nscr_k}(\mathcal{DM}^{\mathrm{\'et}}(X))}(M,N) & \simeq & \mathrm{map}_{\mathrm{Mod}_{\pi_X^*\nscr_k}(\mathcal{DM}^{\mathrm{\'et}}(X))}(\Q_X,\sHom(M,N)) \\
		& \simeq & \mathrm{map}_{\mathrm{Mod}_{\nscr_k}(\mathcal{DM}^{\mathrm{\'et}}(k))}(\Q_k,(\pi_X)_*\sHom(M,N)) \\
		& \simeq & \mathrm{map}_{\mathcal{DN}(k)}(\Q_k,\widetilde{\mathrm{Nor}^*}(\pi_X)_*\sHom(M,N)) \\
		& \simeq & \mathrm{map}_{\mathcal{DN}(k)}(\Q_k,(\pi_X)_*\sHom(\widetilde{\mathrm{Nor}^*}M,\widetilde{\mathrm{Nor}^*}N)) \\
		& \simeq &\mathrm{map}_{\mathcal{DN}(X)}(\widetilde{\mathrm{Nor}^*}M,\widetilde{\mathrm{Nor}^*}N)
	\end{eqnarray*}
	which finishes the proof of fully faithfulness. Now that we know that the unit 
	$$\mathrm{Id}\to \widetilde{\mathrm{Nor}_*}\widetilde{\mathrm{Nor}^*}$$ is an equivalence in $\mathrm{Mod}_{\pi_X^*\nscr_k}(\mathcal{DM}^{\mathrm{\'et}}(X))$, by plugging in $\pi_X^*\nscr_k$ we obtain 
	$$\pi_X^*\nscr_k\xrightarrow{\sim} \widetilde{\mathrm{Nor}_*}\widetilde{\mathrm{Nor}^*}(\pi_X^*\nscr_k)$$ and we have $$\widetilde{\mathrm{Nor}^*}(\pi_X^*\nscr_k)\simeq \pi_X^*\widetilde{\mathrm{Nor}^*}(\nscr_k)\simeq \Q_X .$$ Therefore the canonical map 
	$$\pi_X^*\nscr_k\to \widetilde{\mathrm{Nor}_*}\Q_X$$ is an equivalence, thus it stays an equivalence after applying the forgetful functor, leaving us with the equivalence $$\pi_X^*\nscr_k\xrightarrow{\sim} {\mathrm{Nor}_*}\Q_X$$ in $\mathcal{DM}^{\mathrm{\'et}}(X)$.

\end{proof}

From the previous lemma there is no more ambiguity for $\nscr_X=\pi_X^*\nscr_k\simeq \mathrm{Nor}_*\Q_X\in\mathcal{DM}^{\mathrm{\'et}}(X)$. 
\begin{prop}
	\label{C0fields}
	Let $k=\colim_i k_i$ be a filtered colimit of fields of characteristic zero. Then the natural functors
	\[ \colim_i\mathcal{DN}_c(\Spec k_i)\to \mathcal{DN}_c(Spec k)\] 
	and 	\[\colim_i\mathcal{DN}(\Spec k_i)\to \mathcal{DN}(Spec k)\] 
	are equivalences in $\catinfty$ and $\mathrm{Pr}^L$ respectively.

	The same is true for $\eta = \lim_i U_i$ with $\eta\in X$ the generic point of an integral finite type $k$-scheme and the $U_i$ are the nonempty open subsets of $X$: 
	\[ \colim_i\mathcal{DN}_{(c)}(U_i)\to \mathcal{DN}_{(c)}(\eta).\] 
\end{prop}

\begin{proof}
	As the functor $\Ind$, from small idempotent complete stable $\infty$-categories to compactly generated presentable stable categories is an equivalence and that the inclusion functor $\mathrm{Pr}^L_\omega\to\mathrm{Pr}^L$ preserves colimits, the second statement follows from the first.
	By \cite[Proposition 6.8]{ivorraFourOperationsPerverse2022} (or \cite[Corollary 6.10]{ivorraFourOperationsPerverse2022} for the generic point case), the diagram $\mcal:I^{\rhd}\to \mathrm{AbCAt}$ from $I$ to the $(2,1)$-category of small abelian categories with exact functors sending $i\in I$ to $\Mp(\Spec k_i)$ and 
	the cone point $v$ to $\Mp(\Spec k)$ is a colimit diagram. By \cite[Theorem 7.4.9]{bunkeControlledObjectsLeftexact2019} the functor sending an Abelian category to the $\infty$-category of bounded complexes up to homotopy is a left adjoint, hence commutes with colimits, so that we obtain a colimit diagram 
	$$\kcal^b\circ\mcal: I^{\rhd}\to\catinfty^\mathrm{st}.$$
	Now as $\dcal^b(\acal)$ is a localisation of $\kcal^b(\acal)$ for any abelian category $\acal$, and as the transitions of $\kcal^b\circ\mcal$ preserve quasi-isomorphisms, $\dcal^b\circ \mcal$ is a colimit diagram.
\end{proof}
\begin{cor}
	Let $f:\Spec k\to\Spec \Q$ be a map of fields. Then the natural map $f^*\nscr_\Q\to\nscr_k$ is an equivalence. 
	Consequently, for any scheme $X$ of finite type over $k$ if we denote by $p_X:X\to\Spec \Q$ the unique map (which has to factor through $f$), then $\nscr_X\simeq p_X^*\nscr_\Q$.
\end{cor}
\begin{proof}
	The second claim follows from the first. Write $k=\colim_i k_i$ with $k_i/\Q$ of finite type. Then the same proof as \cite[Lemma 3.5.7]{MR4466640}, using continuity of both $\mathcal{DM}^{\mathrm{\'et}}$ and $\mathcal{DN}$ for fields (\Cref{C0fields}) and that these categories are compactly generated gives that the natural map 
	$$\colim_i f_i^*\nscr_\Q \to \nscr_k$$ is an equivalence so that this reduces to the case where $k/\Q$ is of finite type. Then $k$ is the generic point of an integral $\Q$-scheme, and again the same proof as \cite[Lemma 3.5.7]{MR4466640}, using continuity for the generic point of perverse Nori motives, reduces to show that $\pi_U^*\nscr_\Q \to \nscr_U$ is an equivalence for $U$ of finite type over $\Q$, which is \Cref{fullFaithNor}.
	
\end{proof}
Therefore now there is even less ambiguity for $\nscr_X = (X\to \Spec \Q)^*\nscr_\Q$.
\begin{prop}
	\label{continu}
	Let $X=\lim_i X_i$ be a limit of $\Q$-schemes with affine transitions. The natural
	morphism \begin{equation}
		\colim_i \mathrm{Mod}_\nscr(\mathcal{DM}^{\mathrm{\'et}}(X_i))\to \mathrm{Mod}_\nscr(\mathcal{DM}^{\mathrm{\'et}}(X))
	\end{equation}
	is an equivalence where the colimit is computed in $\mathrm{Pr}^{L}$.
\end{prop}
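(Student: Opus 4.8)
The plan is to deduce this from the continuity of $\mathcal{DM}$ itself, using that $\nscr$ is a pulled‑back family of $\mathbb{E}_\infty$‑algebras: I would realise $\mathrm{Mod}_\nscr(\mathcal{DM}(-))$ on the pro‑system $(X_i)$ as $\mathcal{DM}(-)\otimes_{\mathcal{DM}(\Spec\Q)}\mathrm{Mod}_{\nscr_{\Spec\Q}}(\mathcal{DM}(\Spec\Q))$, and then commute the colimit past the relative tensor product. Equivalently, this is the assertion that the functor $(\ccal,A)\mapsto\mathrm{Mod}_A(\ccal)$ preserves colimits, applied to the diagram $(\mathcal{DM}(X_i),\nscr_{X_i})_i$, whose colimit I must identify with $(\mathcal{DM}(X),\nscr_X)$.

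First I would record the continuity of Voevodsky motives: for $X=\lim_i X_i$ with affine transitions, $\colim_i\mathcal{DM}_c(X_i)\to\mathcal{DM}_c(X)$ is an equivalence (\cite[Proposition 3.20]{MR3205601}, \cite[Proposition 4.3.4]{MR3971240}), hence after indization $\colim_i\mathcal{DM}(X_i)\to\mathcal{DM}(X)$ is an equivalence in $\mathrm{CAlg}(\mathrm{Pr}^\mathrm{L})$. Since every scheme in sight has a unique morphism to $\Spec\Q$ and all the transition and cocone functors are the symmetric monoidal (and $\mathcal{DM}(\Spec\Q)$‑linear) pullbacks, this is a colimit of $\mathcal{DM}(\Spec\Q)$‑algebras; in particular $\mathcal{DM}(X)$ is the colimit of the $\mathcal{DM}(X_i)$ already as an object of $\mathrm{Mod}_{\mathcal{DM}(\Spec\Q)}(\mathrm{Pr}^\mathrm{L})$, the relevant forgetful functors on $\mathrm{Pr}^\mathrm{L}$ preserving this (filtered) colimit.

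Next, by the previous lemma $\nscr_{X_i}\simeq p_{X_i}^{*}\nscr_{\Spec\Q}$ and $\nscr_X\simeq p_X^{*}\nscr_{\Spec\Q}$, where $p_{?}$ is the structure morphism to $\Spec\Q$. I would then invoke base change for module categories (\cite[Section~4.8.5]{lurieHigherAlgebra2022}; compare the argument of \cite[Remark 1.97 and Theorem 1.93]{ayoubAnabelianPresentationMotivic2022}): for any colimit‑preserving symmetric monoidal $F\colon\ccal\to\dcal$ and $A\in\mathrm{CAlg}(\ccal)$ there is a canonical equivalence $\mathrm{Mod}_{F(A)}(\dcal)\simeq\dcal\otimes_\ccal\mathrm{Mod}_A(\ccal)$, natural in $F$. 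Applying this to $F=p_{X_i}^{*}$ and $F=p_X^{*}$ with $A=\nscr_{\Spec\Q}$ yields an equivalence of $I$‑diagrams $\mathrm{Mod}_{\nscr_{(-)}}(\mathcal{DM}(-))\simeq\mathcal{DM}(-)\otimes_{\mathcal{DM}(\Spec\Q)}\mathrm{Mod}_{\nscr_{\Spec\Q}}(\mathcal{DM}(\Spec\Q))$. Since $-\otimes_{\mathcal{DM}(\Spec\Q)}\mathrm{Mod}_{\nscr_{\Spec\Q}}(\mathcal{DM}(\Spec\Q))$ preserves colimits, combining this with the previous step gives
\[\mathrm{Mod}_{\nscr_X}(\mathcal{DM}(X))\simeq\Big(\colim_i\mathcal{DM}(X_i)\Big)\otimes_{\mathcal{DM}(\Spec\Q)}\mathrm{Mod}_{\nscr_{\Spec\Q}}(\mathcal{DM}(\Spec\Q))\simeq\colim_i\mathrm{Mod}_{\nscr_{X_i}}(\mathcal{DM}(X_i)),\]
and one checks that the composite of these identifications is the comparison functor of the statement, namely the cocone induced by the pullbacks $f_i^{*}\colon\mathcal{DM}(X_i)\to\mathcal{DM}(X)$ together with the equivalences $f_i^{*}\nscr_{X_i}\simeq\nscr_X$ coming from $p_X=p_{X_i}\circ f_i$.

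I expect the only real work to be bookkeeping rather than conceptual: verifying that the base‑change equivalence is functorial enough to be an equivalence of $I$‑diagrams (it is, being constructed naturally in $F$ via the universal property of $\mathrm{Mod}_A(\ccal)$ as a $\ccal$‑algebra), and that the continuity of $\mathcal{DM}$ may be read off relative to $\mathcal{DM}(\Spec\Q)$ (which reduces to the fact that the forgetful functors between algebras, modules and $\mathrm{Pr}^\mathrm{L}$ preserve filtered colimits). No new geometric input about motives is needed beyond the continuity of $\mathcal{DM}$ and the identification $\nscr_X\simeq p_X^{*}\nscr_{\Spec\Q}$ already established.
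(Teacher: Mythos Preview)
Your proposal is correct and is essentially the paper's argument: the paper observes that $\colim_i\mathcal{DM}(X_i)\to\mathcal{DM}(X)$ is an equivalence, that consequently $i\mapsto(\mathcal{DM}(X_i),\nscr_{\mid X_i})$ is a colimit diagram in Lurie's $\mathrm{Pr}^{\mathrm{Alg}}$, and then invokes \cite[Theorem 4.8.5.11]{lurieHigherAlgebra2022} that $(\ccal,A)\mapsto\mathrm{Mod}_A(\ccal)$ is a left adjoint, hence preserves colimits (with \cite[Lemma 3.5.6]{MR4466640} for the monoidal structure). Your base-change reformulation via $\mathrm{Mod}_{F(A)}(\dcal)\simeq\dcal\otimes_\ccal\mathrm{Mod}_A(\ccal)$ is exactly an unpacking of this colimit preservation, and you already note the equivalence of the two viewpoints in your second sentence.
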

\begin{proof}
	By \cite[Lemma 5.1 (ii)]{MR4319065}, the functor
	\begin{equation}
		\colim_i \mathcal{DM}^{\mathrm{\'et}}(X_i,\Q) \to \mathcal{DM}^{\mathrm{\'et}}(X,\Q)
	\end{equation}
	is an equivalence. Therefore, the assignment $i\mapsto (\mathcal{DM}^{\mathrm{\'et}}(X_i,\Q),\nscr_{\mid X_i})$ is a colimit
	diagram in the category $\mathrm{Pr}^{\mathrm{Alg}}$ of \cite[Notation 4.8.5.10]{lurieHigherAlgebra2022}. By
	\cite[Theorem 4.8.5.11]{lurieHigherAlgebra2022}, the functor $(\ccal,A)$ that sends a presentable symmetric monoidal category $\ccal$ together with an algebra object $A\in\ccal$ to the category $\mathrm{Mod}_A(\ccal)$ has a right adjoint, hence preserves colimits.
	This is why the assignment $i\mapsto \mathrm{Mod}_{\nscr_{X_i}}(\mathcal{DM}^{\mathrm{\'et}}(X_i,\Q))$ is a colimit diagram. We found a
	 precise statement giving the symmetric monoidal structure in the paper by Ayoub, Gallauer and Vezzani. The precise statement is \cite[Lemma 3.5.6]{MR4466640}.
\end{proof}

\begin{prop}
	\label{Norimodules}
	The functor \begin{equation}\widetilde{\mathrm{Nor}^*} :\mathrm{Mod}_\nscr(\mathcal{DM}^{\mathrm{\'et}}(X)) \to\mathcal{DN}(X)\end{equation}
	is an equivalence for every finite type $k$-scheme $X$.
\end{prop}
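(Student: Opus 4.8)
The plan is to imitate the proof of \cite[Theorem 1.93 (i)]{ayoubAnabelianPresentationMotivic2022}. Full faithfulness of $\tilde{\mathrm{Nor}^*}$ over every finite type $k$-scheme is exactly \Cref{fullFaithNor}, so the whole content is essential surjectivity. First I would note that $\tilde{\mathrm{Nor}^*}$ preserves colimits: by \cref{factoModN} we have $\mathrm{Nor}^*=\tilde{\mathrm{Nor}^*}\circ(\nscr_X\otimes-)$, and both $\mathrm{Nor}^*$ and $\nscr_X\otimes-$ preserve colimits (alternatively $\tilde{\mathrm{Nor}^*}$ is a left adjoint, since it preserves compact objects). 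Being fully faithful and colimit preserving, $\tilde{\mathrm{Nor}^*}$ has as essential image a localizing subcategory $\mathcal{E}_X\subseteq\mathcal{DN}(X)$, and it remains to prove $\mathcal{E}_X=\mathcal{DN}(X)$.

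Then I would record what $\mathcal{E}_X$ contains and how it is stable. By \cref{factoModN} it contains $\mathrm{Nor}^*N$ for every $N\in\mathcal{DM}(X)$, in particular $\Q_X$. Since $\nscr_X\simeq p_X^*\nscr_{\Spec\Q}$ is stable under the operations (for $f_*$ and $f_!$ via the projection formula $f_*(M)\otimes\nscr_X\simeq f_*(M\otimes f^*\nscr_X)\simeq f_*(M\otimes\nscr_Y)$ and its dual), the functor $\nscr_X\otimes-$ commutes with the six operations; as $\mathrm{Nor}^*$ does too by \Cref{realNori} and $\tilde{\mathrm{Nor}^*}$ is fully faithful, so does $\tilde{\mathrm{Nor}^*}$. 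Hence $\mathcal{E}_X$ is a localizing subcategory of $\mathcal{DN}(X)$, stable under $f^*$, $f_*$, $f_!$, $f^!$, $\otimes$ and $\sHom$, and containing $\mathrm{Nor}^*N$ for all $N\in\mathcal{DM}(X)$.

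Because $\mathcal{DN}(X)=\Ind\dnori(X)$ is generated under colimits by the compact objects $\dnori(X)=\dcal^b(\Mc(X))$ (\Cref{constructTft}), and because the constructible $t$-structure on $\dcal^b(\Mc(X))$ is bounded, it suffices to show that $\mathcal{E}_X$ contains every constructible motive $M\in\Mc(X)$. I would prove this exactly along the lines of Nori's dévissage of \Cref{section1}: Noetherian induction on the support $S$ of $M$, using \Cref{trucliss} to find a dense smooth open $Z$ of $S$ on which $M$ is lisse, an open immersion $\iota:U\hookrightarrow X$ with $U\cap S=Z$ and a closed immersion $i:Z\hookrightarrow U$, so that $\iota^*M\simeq i_*(M_{|Z})$ and the localization triangle $\iota_!i_*(M_{|Z})\to M\to N$ has $N$ of strictly smaller support, hence $N\in\mathcal{E}_X$ by induction; stability of $\mathcal{E}_X$ under $\iota_!$, $i_*$ and fibres then reduces the problem to showing that every lisse motive $L$ on a smooth $k$-variety $Z$ lies in $\mathcal{E}_Z$ (and one may freely shrink $Z$, arguing by induction on $\dim Z$ as well).

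The lisse case is the main obstacle and is where Ayoub's argument genuinely enters. Here $L$ is dualizable by \Cref{trucliss}, and since $\tilde{\mathrm{Nor}^*}$ is a fully faithful symmetric monoidal functor the point is to identify the dualizable objects of $\mathcal{DN}(Z)$ — the lisse motives — with the images of the dualizable objects of $\mathrm{Mod}_{\nscr_Z}(\mathcal{DM}(Z))$; this is precisely the information encoded by the $\mathbb{E}_\infty$-algebra $\nscr_Z$, and is handled as in \cite[Theorem 1.93]{ayoubAnabelianPresentationMotivic2022}, using \Cref{continu} for the continuity of $\mathrm{Mod}_\nscr(\mathcal{DM}(-))$, \Cref{C0fields} for the continuity of $\mathcal{DN}$ along filtered systems of fields, and, over a field, the construction of $\Mp(\Spec K)=\mathcal{M}_{\mathrm{Nori}}(K)$ as a universal quotient of $\rcal(\mathcal{DM}_c(\Spec K))$ together with \Cref{fullFaithNor}. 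Once all lisse motives are known to lie in $\mathcal{E}$, the Noetherian induction closes and $\mathcal{E}_X=\mathcal{DN}(X)$, so $\tilde{\mathrm{Nor}^*}$ is an equivalence.
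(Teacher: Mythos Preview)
Your overall strategy matches the paper's: both follow \cite[Theorem~1.93]{ayoubAnabelianPresentationMotivic2022}, invoke \Cref{fullFaithNor} for full faithfulness, and establish essential surjectivity by Noetherian induction combined with continuity to reduce to the case of a field. Your dévissage is however more involved than needed. The paper performs Noetherian induction directly on the scheme $X$: for $M\in\mathcal{DN}(X)$ one passes to a generic point $\eta$, handles the field case there, spreads the preimage to an open $U$ via \Cref{continu}, lifts the isomorphism over $\eta$ to a smaller open $V$ using that full faithfulness of $\tilde{\mathrm{Nor}^*}$ yields non-effective continuity for $\mathcal{DN}$, and concludes with the localization triangle and the induction hypothesis on the closed complement. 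Your detour through supports of $M$ and the lisse case reaches the same endpoint but adds an unnecessary layer.

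The genuine gap is your treatment of the field case. Saying that $\Mp(\Spec K)$ is the universal quotient of $\rcal(\mathcal{DM}_c(\Spec K))$ only tells you that every Nori motive is a cokernel of a map between objects of the form $H_u(M)$ with $M\in\mathcal{DM}_c(\Spec K)$; but $H_u(M)$ is the \emph{perverse} $\HHp^0$ of $\mathrm{Nor}^*(M)$, and there is no reason the essential image of $\tilde{\mathrm{Nor}^*}$---a localizing, not $t$-localizing, subcategory---is closed under this truncation. The paper supplies the missing input: by \cite[Theorem~10.2.5]{MR3618276} every object of $\Mp(\Spec K)$ has a weight filtration whose graded pieces are direct summands of $\HH^{n}((\pi_Y)_*\Q_Y)(i)$ for some smooth projective $\pi_Y:Y\to\Spec K$. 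Since $(\pi_Y)_*\Q_Y(i)$ is pure it decomposes as $\bigoplus_n\HH^n((\pi_Y)_*\Q_Y)(i)[-n]$, and $(\pi_Y)_*\Q_Y(i)=\mathrm{Nor}^*\bigl((\pi_Y)_*\Q_Y(i)\bigr)$ is manifestly in the essential image by compatibility with the six operations; dévissage along the weight filtration then gives the whole heart, hence all compact objects, hence all of $\mathcal{DN}(\Spec K)$.
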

\begin{proof}
	The functor is fully faithful by \Cref{fullFaithNor}.
	We first prove that it is essential surjective for $X=\Spec K$ the spectrum of a field, not necessarily finite over $k$.
	 As $\mathcal{DN}(\Spec K)$ is compactly generated it suffices to show that $\widetilde{\mathrm{Nor}^*}$ reaches all compact objects $M\in\dcal^b(\Mp(K))$. As the functor is already fully faithful, by dévissage it suffices 
	to check that any object of the heart $M\in\Mp(K)$ is in the image of $\widetilde{\mathrm{Nor}^*}$.  Any such object has a weight filtration whose graded pieces are direct factors 
	of $\HH^{n+2j}(\pi_X)_*\Q_X(i)$ with $\pi_X:X\to\Spec K$ a smooth projective morphism by Arapura's \cite[Theorem 10.2.5]{MR3618276}. Note that here we used that if $K$ is a finite type extension of $k$, then the category $\Mp(\Spec K)$ obtained by colimit as motives over the generic point of a variety over $k$ coincides with Nori motives over $K$. This holds with a beautiful proof in \cite[Proposition 1.15]{terenziTensorStructurePerverse2024} or can be proven by using that the  $\ell$-adic realisation is absolute, \emph{i.e.} does not depend on the base field. Therefore 
	by dévissage again it suffices to check that any such $\HH^{n+2j}(\pi_X)_*\Q_X(i)$ is in the image. But as $(\pi_X)_*\Q_X(i)$ is pure of weight $-2i$, it is the direct sum of its 
	$\HH^n[-n]$, thus is suffices to show that each $(\pi_X)_*\Q_X(i)$ is in the image, which is obvious by compatibility with the $6$ operations.

	Now for a finite type $k$-scheme, the proposition is proven by Noetherian induction.
	Assume now that $X$ is such that the statement is true for any proper closed subset of $X$. We take an object $M\in\mathcal{DN}(X)$ that we may assume to be compact. Let $\eta$ be a generic point of $X$. By the case of a field,
	there exists $N\in\mathrm{Mod}_\nscr(\mathcal{DM}^{\mathrm{\'et}}(\eta))$, compact, such that $\widetilde{\mathrm{Nor}^*}(N)=M_\eta$. By \Cref{continu} there exists an open subset $U$ of $X$ and an object $N'\in\mathrm{Mod}_\nscr(\mathcal{DM}^{\mathrm{\'et}}(U))$, compact, such that $(N')_\eta = N$.
	The functor $\colim_{\eta\in U}\mathcal{DN}(U)\to\mathcal{DN}(\eta)$ is fully faithful by \cref{C0fields}. Thus the isomorphism 
	$\widetilde{\mathrm{Nor}^*}(N)\to M_\eta$ lifts to a smaller open subset $V$ of $U$ : we have $\widetilde{\mathrm{Nor}^*}((N')_{\mid V})\simeq M_{\mid V}$. Denote by $Z$ the reduced closed complement 
	of $V$ in $X$. By induction hypothesis, there exists a $N''\in\mathrm{Mod}_\nscr(\mathcal{DM}^{\mathrm{\'et}}(Z))$ such that $\widetilde{\mathrm{Nor}^*}(N'')=M_{\mid Z}$. 
	The cofiber sequence $\widetilde{\mathrm{Nor}^*}(j_!(N')_{\mid V})\to M \to \widetilde{\mathrm{Nor}^*}(i_*N'')$ with $j:U\to X$ and $i:Z\to X$ the immersion then ensures that 
	$M$ is the image of $\widetilde{\mathrm{Nor}^*}$, which finishes the proof.

\end{proof}
\begin{rem}
	The same arguments would
	prove that the $\infty$-category
	$$\mathcal{DH}^{\mathrm{geo}}(X)\subset\Ind\dcal^b(\mathrm{MHM}(X))$$ of geometric origin objects in the
	 indization 
	of the derived category of mixed Hodge modules is 
	also the category of modules over some algebra $\mathscr{H}_X$ in $\mathcal{DM}^{\mathrm{\'et}}(X)$. Note that the lack of continuity of Mixed Hodge modules would prevent things like $p_X^*\mathscr{H}_{\Spec \Q}\simeq \mathscr{H}_{X}$ of happening so one would have to be careful of which algebra is used and an additional argument over the generic point will be needed, similarly to the case of the Betti realisation.
 Similar ideas are developed in \cite{drewMotivicHodgeModules2018} and the fact that one would obtain $(X\to \Spec k)^*\hscr_{\Spec k}\simeq \hscr_X$ identifies Drew's category as the full subcategory $\mathcal{DH}^\mathrm{geo}(X)$ of geometric origin mixed Hodge modules. His considerations with enriched categories of motives also have an application here: his results, together with the same proof as above, would show that his category $\mathbf{DH}$ of motivic enriched Hodge modules is the smallest full subcategory of $\Ind\dcal^b(\mathrm{MHM}(-))$ which is stable under truncation and filtered colimts, and that contains the $f_*g^*H$ for $f:Y\to X$ proper, $g:Y\to\Spec \C$ the structural morphism and $H\in\dcal(\Ind\mathrm{MHS}^p)$. See \cite{SwannMHM} for a detailed discussion.
\end{rem}
\begin{cor}
	\label{C0ctNori}
	The $\infty$-functor $X\mapsto\dcal^b(\Mp(X))$ has an unique extension $\mathcal{DN}_c(X)$ to quasi-compact 
	and quasi-separated schemes of characteristic zero such that 
	for all limit $X=\lim_i X_i$ of such schemes with affines transitions the natural functor 
	\[ \colim_i\mathcal{DN}_c(X_i)\to\mathcal{DN}_c(X)\] 
	is an equivalence of $\infty$-categories, where we compute the colimit in $\mathrm{Pr}^{L}$.
\end{cor}
\begin{proof}
	Let $\ccal$ be category of schemes that are of finite type over a field of characteristic zero. Let $\mathrm{Sch}_\Q$ be
	 the category of quasi-compact and quasi-separated (qcqs) $\Q$-schemes, and $\iota : \ccal\to\mathrm{Sch}_\Q$ the inclusion.
	Nori motives, together with the pullback functoriality, define a functor $\ccal^\op \to\mathrm{CAlg}(\catinfty)$ which sends
	 limits of schemes with affine transitions to colimits of $\infty$-categories by \Cref{continu} together with the fact that the colimit is taken in 
	 $\mathrm{Pr}^L$ hence restricts to compact objects.
	We can denote by $\mathcal{DN}_c$ its left Kan extension along $\iota$.
\end{proof}

\begin{cor}
	\label{generation}
Let $X$ be a finite type $k$-scheme. The $\infty$-category $\mathcal{DN}(X)$ is compactly generated by the $f_\sharp\Q_Y(i)$ for $f:Y\to X$ smooth and $i\in \Z$.
\end{cor}
\begin{proof}
	Indeed, this the case for $\mathcal{DM}^{\mathrm{\'et}}(X)$, thus for $\mathrm{Mod}_{\nscr_X}(\mathcal{DM}^{\mathrm{\'et}}(X))\simeq \mathcal{DN}(X)$ by \cite[Lemma 2.6]{MR3789830}.
\end{proof}
 We finish with the proof of the following natural corollary, whose proof was surprisingly harder than expected for a result that the author took for granted for a long time.

\begin{cor}
	\label{Huniv}
	Let $X$ be a quasi-projective $k$-scheme. 
	The two functors $$\mathcal{DM}^{\mathrm{\'et}}_c(X)\to \Mp(X)$$ given by $\HHp^0\circ\mathrm{Nor}^*$ and the universal functor $\HH_\mathrm{univ}$ are canonically equivalent.
\end{cor}
\begin{proof}
	First, note that the functor $\HHp^0\circ\mathrm{Nor}^*$ gives a factorisation of the $\HHp^0$ of the Betti realisation of étale motives $$\mathcal{DM}^{\mathrm{\'et}}_c(X)\to \mathrm{Perv}(X).$$ By the universal property of perverse Nori motives, this induces a canonical functor $$F:\Mp(X)\to \Mp(X)$$ such that $R_B\circ F \simeq R_B$ and $\HHp^0\circ\mathrm{Nor}^*\simeq F\circ \HH_\mathrm{univ}$. Now the compatibility of $\mathrm{Nor}^*$ with the operations ensures that for any perverse t-exact operation $h$ the functor $F$ commutes with the operation $h$. Hence $F$ commutes with $i_*$ for $i$ a closed immersion and with $f^*[d]$ for $f$ a smooth map of relative dimension $d$. We will still denote by $F$ the functor induced by $F$ on $\dcal^b(\Mp(X))$. As $F$ commutes with $i_*$, there exists an exchange morphism $i^*\circ F\to F \circ i^*$. Applying the Betti (or $\ell$-adic in the case the field $k$ is too big) realisation where this exchange morphism is an equivalence, we see that $F$ commutes with $i^*$. The same argument works to show that $F$ commutes with $f_\sharp$ for smooth $f$: because $F$ commutes with $f^*$ there is a natural transformation $f_\sharp \circ F \to F\circ f_\sharp$ which is an equivalence after realisation, hence $F$ also commutes with the $f_\sharp$ functors. It also commutes with external tensor product because is it perverse t-exact, hence with the internal tensor product which is obtained from the external one by pulling back along the diagonal.
	
	The functor $F$ induces a compact and colimit preserving functor 
	$$F:\Ind\dcal^b(\Mp(X))\to\Ind\dcal^b(\Mp(X))$$ which commutes with all pullbacks, pushforwards and left adjoint to pushforward by smooth maps. Denote by $G$ its right adjoint. We claim that the natural transformation $$\mathrm{Id}\to GF$$ is an equivalence. Indeed, by \Cref{generation} we have that $\Ind\dcal^b(\Mp(X))$ is generated by colimits by the $f_\sharp\Q_Y(i)$ for $f:Y\to X$ a smooth map and $i\in \Z$. This means that it suffices to check that for $M\in\Ind\dcal^b(\Mp(X))$, the map 
	$$\mathrm{map}(f_\sharp\Q_Y(i),M)\to \mathrm{map}(f_\sharp\Q_Y(i),GF(M))$$ is an equivalence. As $G$ and $F$ commute with filtered colimits and $f_\sharp\Q_Y(i)$ is compact, we can assume that $M$ is as well compact. Moreover, compact objects are constructed by extensions, finite limits, finite colimits and direct factors from the same $f_\sharp\Q_Y(i)$, thus by dévissage we can assume that $M = g_\sharp\Q_Z(j)$ for some smooth map $g:Z\to X$ and $j\in\Z$. Using that $G$ is a right adjoint and that $F$ commutes with the operations, the above map is equivalent to the map 
	$$\mathrm{map}(f_\sharp\Q_Y(i),g_\sharp\Q_Z(j))\to \mathrm{map}(f_\sharp F(\Q_Y)(i),g_\sharp F(\Q_Z)(j))$$ and as $F(\Q) = \Q$, this finishes the proof.

	Now that $F$ is fully faithful, it is automatically essentially surjective because we know it reaches the generators $f_\sharp\Q(i)$ of $\Ind\dcal^b(\Mp(X))$. Now, as $F$ commutes with the operations, we have by universality of $\mathcal{DM}^{\mathrm{\'et}}$ that $F\circ \mathrm{Nor}^*\simeq \mathrm{Nor}^*$. Indeed because we proved that $F$ commutes with the tensor product and because it is t-exact for the constructible t-structure we can see $F$ as the derived functor of a symmetric monoidal functor on the constructible heart, apply \Cref{symMonoCatDeri} and deduce that $F$ is a morphism of coefficient systems. This proves that the compositions of the morphisms $\mathrm{Nor}^*$ and $F\circ\mathrm{Nor}^*$ with the morphism $\mathcal{SH}\to\mathcal{DM}^\mathrm{\'et}$ agree by universality of $\mathcal{SH}$, thus also their factorisation through the localisation $\mathcal{DM}^{\'et}$ of $\mathcal{SH}$. In particular over $X$, $F\circ \HHp^0\circ \mathrm{Nor}^* \simeq \HHp^0\circ\mathrm{Nor}^*$ hence $F\circ F\circ  \HH_\mathrm{univ} \simeq  \HHp^0\circ \mathrm{Nor}^*$ and the universal property of $\Mp(X)$ tells us that $F\circ F\simeq F$, so that $F\simeq \mathrm{Id}$.
\end{proof}

\subsection{Relations with the t-structure conjecture.}
\label{tstructCOnj}
Let $k$ be a field of characteristic zero.
Recall the following conjecture (\cite[Section 21.1.7]{MR2115000} and \cite{MR2953406}):
\begin{conj}
	\label{tstrConj}
	There exists a non degenerate t-structure on $\mathcal{DM}^{\mathrm{\'et}}_c(k)$ compatible with the tensor product and
	such that the $\ell$-adic realisation functor is t-exact.
\end{conj}
In \cite{MR2953406} Beilinson proves that if the characteristic of $k$ is zero the conjecture implies that the realisation functors of $\mathcal{DM}^{\mathrm{\'et}}_c(k)$ are conservative. In \cite[Theorem 3.1.4]{MR3347995}, Bondarko proves that if such a t-structure exists for all fields of characteristic $0$, then there exists a perverse t-structure
on $\mathcal{DM}^{\mathrm{\'et}}_c(X)$ for each finite type $k$-scheme $X$. The joint conservativity of the family of $x^*$ for $x\in X$ and the conservativity of the the realisation functor over fields implies that in that case, the $\ell$-adic realisation over $X$ is conservative and t-exact, when its target is endowed with the perverse t-structure.

\begin{thm}
	\label{IfConjThenSame}
	Assume \Cref{tstrConj} for all fields of characteristic $0$. Let $X$ be a finite type $k$-scheme.
	Then the heart of the perverse t-structure of $\mathcal{DM}^{\mathrm{\'et}}_c(X)$ is canonically equivalent to the category $\Mp(X)$ of perverse Nori motives. Moreover, the functor
	$\mathrm{Nor}^* : \mathcal{DM}^{\mathrm{\'et}}_c(X)\to\dcal^b(\Mp(X))$ is an equivalence of stable $\infty$-categories. This implies that $\mathcal{DM}^{\mathrm{\'et}}_c(X)$ is the derived category of the abelian category of both its perverse and constructible heart.
\end{thm}

From now on, we assume that \Cref{tstrConj} is true. We begin by two lemmas, in which we denote by $\mathrm{Var}_k$ the category of quasi-projective $k$-varieties.

\begin{lem}
	\label{lem1tstrconj}
	Let $X$ be quasi-projective scheme. Denote by $\mcal(X)$ the heart of the perverse t-structure on $\mathcal{DM}^{\mathrm{\'et}}_c(X)$.
	There is a faithful exact functor $F_X:\Mp(X)\to \mcal(X)$ that commutes with the $\ell$-adic realisation. Denote by $$\gamma_X : \dcal^b(\Mp(X))\to\mathcal{DM}^{\mathrm{\'et}}_c(X)$$ the canonical functor induced by the universal property of the bounded derived category applied to the functor $$\Mp(X)\xrightarrow{F_X}\mcal(X)\to\mathcal{DM}_c^\mathrm{\'et}(X).$$
	The composition $\mathrm{Nor}^*_X\circ \gamma_X$ is equivalent to the identity functor $\mathrm{Id}_{\dcal^b(\Mp(X))}$. Moreover the $\mathrm{ho}(\gamma_X)$ fit in a morphism $\gamma$ of functors $\mathrm{Var}^\op_k\to\mathrm{SymMono}_1$ with values in symmetric monoidal categories, using the $(-)^*$-functoriality.
\end{lem}
\begin{proof}
	By assumption the restriction to the heart of the $\ell$-adic realisation functor $$\rho_\ell\colon\mcal(X)\to\mathrm{Perv}(X,\Q_\ell)$$ is faithful exact and the composition $\rho_\ell\circ \HHp^0$ with the perverse $\HHp^0$ on $\mathcal{DM}^{\mathrm{\'et}}_c(X)$ is isomorphic to the perverse $\HH^0$ of the $\ell$-adic realisation. By the universal property of perverse Nori motives \cite[Proposition 6.10]{ivorraFourOperationsPerverse2022} this provides a faithful exact functor $F_X\colon \Mp(X)\to\mcal(X)$ such the composition $\rho_\ell\circ F_X$ is isomorphic to the $\ell$-adic realisation $R_\ell$ of perverse Nori motives and the composition $F_X\circ\HH_\mathrm{univ}$ with the universal functor $\HH_\mathrm{univ}\colon\mathcal{DM}^{\mathrm{\'et}}_c(X)\to \Mp(X)$ is the perverse $\HHp^0$ on $\mathcal{DM}^{\mathrm{\'et}}_c(X)$: we have a commutative (up to natural isomorphism) diagram
	\[\begin{tikzcd}
		{\mathcal{DM}^{\mathrm{\'et}}_c(X)} & {\Mp(X)} \\
		& {\mcal(X)} & {\mathrm{Perv}(X,\Q_\ell)}
		\arrow["{\HH_{\mathrm{univ}}}", from=1-1, to=1-2]
		\arrow["{\HHp^0}"', from=1-1, to=2-2]
		\arrow["{F_X}", from=1-2, to=2-2]
		\arrow["{\mathcal{R}_\ell}", from=1-2, to=2-3]
		\arrow["{\rho_\ell}"', from=2-2, to=2-3]
	\end{tikzcd}.\] 
	Now consider the endofunctor \[\Mp(X)\xrightarrow{F_X}\mcal(X)\xrightarrow{\mathrm{Nor}^*} \Mp(X)\] of $\Mp(X)$. Because $\mathrm{Nor}^*$ is perverse t-exact (this can be checked after $\ell$-adic realisation), if we compose $F_X$ with the universal $\HH_\mathrm{univ}$ we obtain \[\mathrm{Nor}^*\circ F_X\circ \HH_\mathrm{univ}\simeq \mathrm{Nor}^*\circ \HHp^0 \simeq \HHp^0\circ\mathrm{Nor}^*\] as functors $\mathcal{DM}^{\mathrm{\'et}}_c(X)\to\Mp(X)$. But by \Cref{Huniv} the functor $\HHp^0\circ\mathrm{Nor}^*$ can canonically be identified with $\HH_\mathrm{univ}$. Thus we have a commutative diagram \[\begin{tikzcd}
		{\mathcal{DM}^{\mathrm{\'et}}_c(X)} & {\Mp(X)} \\
		&& {\mcal(X)} & {\mathrm{Perv}(X,\Q_\ell)} \\
		& {\Mp(X)}
		\arrow["{\HH_{\mathrm{univ}}}", from=1-1, to=1-2]
		\arrow["{\HH_{\mathrm{univ}}}"', from=1-1, to=3-2]
		\arrow["{F_X}"{description}, from=1-2, to=2-3]
		\arrow["{R_\ell}", from=1-2, to=2-4]
		\arrow["{\mathrm{Nor}^*\circ F_X}"{description}, from=1-2, to=3-2]
		\arrow["{\rho_\ell}"{description}, from=2-3, to=2-4]
		\arrow["{\mathrm{Nor}^*}"{description}, from=2-3, to=3-2]
		\arrow["{R_\ell}"', from=3-2, to=2-4]
	\end{tikzcd}.\]
	By the universal property of $\Mp(X)$, the only vertical faithful exact functor fitting in the above diagram and making the two outer triangles commute is (up to natural isomorphism) the identity functor: this implies that there exists a natural equivalence $\mathrm{Nor}^*\circ F_X\simeq\mathrm{Id}_{\Mp(X)}$.
	
	Now by the universal property of the bounded derived category we have a functor $\dcal^b(\mcal(X))\to\mathcal{DM}^{\mathrm{\'et}}_c(X)$, thus by composition with the (trivially) derived functor of $F_X$ there is a functor \[\gamma_X\colon\dcal^b(\Mp(X))\to\mathcal{DM}^{\mathrm{\'et}}_c(X).\] 
	By definition, the restriction to $\Mp(X)$ of $\gamma_X$ is exactly the functor $F_X$ composed with the inclusion $\mcal(X)\to\mathcal{DM}^{\mathrm{\'et}}_c(X)$. In particular, the composition $\mathrm{Nor}^*_X\circ\gamma_X$ induces the functor $\mathrm{Nor}^*_X\circ F_X$ between the hearts. By the universal property of $\dcal^b(\Mp(X))$ and the above paragraph, this implies that there exists a natural equivalence $\mathrm{Nor}^*_X\circ\gamma_X\simeq \mathrm{Id}_{\dcal(\Mp(X))}$.

	Using \cite[Section 2.2]{ivorraFourOperationsPerverse2022} and the t-exactness of the $\ell$-adic realisation of étale motives we see that there exist natural isomorphisms $i_*\circ \gamma\simeq \gamma\circ i_*$ for $i$ a closed immersion, $f^*[d]\circ\gamma\simeq \gamma\circ f^*[d]$ for $f$ a smooth morphism of relative dimension $d$ and $\gamma(-)\boxtimes\gamma(-)\simeq \gamma(-\boxtimes -)$ with $\boxtimes$ the external tensor product. Indeed for the pullback by  a smooth map $f\colon Y\to X$ we have a commutative diagram \[\begin{tikzcd}
		{\mathcal{DM}^{\mathrm{\'et}}_c(X)} & {\Mp(X)} & {\mcal(X)} \\
		{\mathcal{DM}^{\mathrm{\'et}}_c(Y)} & {\Mp(Y)} & {\mcal(Y)}
		\arrow["{\HH_\mathrm{univ}}", from=1-1, to=1-2]
		\arrow["{f^*[d]}"', from=1-1, to=2-1]
		\arrow["{F_X}", from=1-2, to=1-3]
		\arrow["{f^*[d]}"', from=1-2, to=2-2]
		\arrow["{f^*[d]}"', from=1-3, to=2-3]
		\arrow["{\HH_\mathrm{univ}}",from=2-1, to=2-2]
		\arrow["{F_Y}"', from=2-2, to=2-3]
	\end{tikzcd}\] obtained by the universal property of $\Mp(X)$, which induces a commutative square \[\begin{tikzcd}
		{\Mp(X)} & {\mathcal{DM}^{\mathrm{\'et}}_c(X)} \\
		{\Mp(Y)} & {\mathcal{DM}^{\mathrm{\'et}}_c(Y)}
		\arrow[from=1-1, to=1-2]
		\arrow["{f^*[d]}"', from=1-1, to=2-1]
		\arrow["{f^*[d]}", from=1-2, to=2-2]
		\arrow[from=2-1, to=2-2]
	\end{tikzcd}\] that gives the wanted natural isomorphism by applying the universal property of the bounded derived category. The cases of $i_*$ and $\boxtimes$ are similar. By adjunction there is a natural transformation $i^*\circ \gamma\Rightarrow \gamma\circ i^*$, which is an isomorphism because $\mathrm{Nor}^*$ is conservative, commutes with $i^*$ and is a retraction of $\gamma$. Using the fact that any morphism of quasi-projective varieties can be written as a composition of a closed immersion and a smooth map, together with the formula for the tensor product using the external tensor product and pullbacks, we see that $\mathrm{ho}(\gamma)$ commutes with pullbacks and tensor products, giving the last claim of the lemma. 
\end{proof}

By the lemma above we know that we have a natural transformation $$\gamma\colon \Dd^b(\Mp(-))\to \mathrm{ho}(\mathcal{DM}^\mathrm{\'et}_c(-))$$ of contravariant functors on quasi-projective with values in symmetric monoidal $1$-categories. We need an $\infty$-categorical enhancement of this functor. This is the next lemma. Note that as for perverse Nori motives, the existence of a perverse t-structure on étale motives $\mathcal{DM}^\mathrm{\'et}_c(X)$ implies that there is also a constructible t-structure, whose heart we denote by $\mathcal{CM}(X)$. The functors $\gamma_X$ and $\mathrm{Nor}^*_X$ are t-exact for the constructible t-structure, as this can be checked after $\ell$-adic realisation.

\begin{lem}
	\label{lem2stdconj}
Let $X$ be a quasi-projective scheme. The functor $\dcal^b(\Mp(X))\to\mathcal{DM}^{\mathrm{\'et}}_c(X)$ induces a  symmetric monoidal functor $\Mc(X)\to \mathcal{CM}(X)$.
This gives a natural transformation \[\delta\colon\colon \dcal^b(\Mc(-))\Rightarrow \mathcal{DM}^{\mathrm{\'et}}_c(-)\] in $\mathrm{Fun}(\mathrm{Var}_k^\op,\mathrm{CAlg}(\catinfty))$, where we use the $(-)^*$-functoriality. Moreover $\delta$ is a morphism of coefficient systems and satisfies $\mathrm{Nor}^*\circ\delta\simeq\mathrm{Id}_{\dcal^b(\Mc(-))}$.  In fact, we have $\mathrm{ho}(\delta)\simeq \gamma$.
\end{lem}
\begin{proof}
	Because $\dcal^b(\Mp(X))\xrightarrow{\gamma_X}\mathcal{DM}^{\mathrm{\'et}}_c(X)$ and $\mathcal{DM}^{\mathrm{\'et}}_c(X)\xrightarrow{\mathrm{Nor}^*}\dcal^b(\Mp(X))$ are t-exact for the constructible t-structure, we have faithful exact functors 
	$\Mc(X)\to\mathcal{CM}(X)$ and $\mathcal{CM}(X)\to \Mc(X)$ whose composition is the identity of $\Mc(X)$ by \Cref{lem1tstrconj}.

	Now by \Cref{SymMonoReal} there is a natural transformation
	\[\dcal^b(\mathcal{CM}(-))\to\mathcal{DM}_c^\mathrm{\'et}\] of functors $\Sch_k^\op\to\mathrm{CAlg}(\catinfty)$. By the last part of \Cref{lem1tstrconj} we have a functor $\Delta^1\times\mathrm{Var}_k^\op\to\mathrm{SymMono}_1$ sending, for $f\colon Y\to X$ a map of quasi-projective varieties, the map $(0<1,f)$ to $\Mc(Y)\xrightarrow{f^*}\Mc(X)\xrightarrow{\gamma_X}\mathcal{CM}(X)$, which is also isomorphic to $\Mc(Y)\xrightarrow{\gamma_Y}\mathcal{CM}(Y)\xrightarrow{f^*}\mathcal{CM}(X)$. Thus by \Cref{SymMonoReal}
	there is a natural transformation \[\dcal^b(\Mc(-))\to\dcal^b(\mathcal{CM}(-))\] of functors 
	$\mathrm{Var}_k^\op\to \mathrm{CAlg}(\catinfty)$. Composing the two we obtain a natural transformation
	$$\delta\colon\dcal^b(\Mc(-))\Rightarrow \mathcal{DM}^{\mathrm{\'et}}_c$$
	of functors with values in symmetric monoidal stable $\infty$-categories. The restriction to $\Mc(X)$ of $\delta_X$ is, by definition, the restriction to $\Mc(X)$ of $\gamma_X$. Thus $\delta_X\simeq \gamma_X$ by \Cref{RestrToHeart}.

	To prove that $\delta$ is a morphism of coefficient systems, it suffices to prove that if $f\colon Y\to X$ is a smooth morphism of varieties, the natural transformation $f_\sharp\circ\delta_Y\to\delta_X\circ f_\sharp$ is an equivalence. For this, we may apply the conservative functor \[\rho_\ell\colon\mathcal{DM}^{\mathrm{\'et}}_c(X)\to\D^b_c(X_{\mathrm{\'et}},\Q_\ell)\] 
	so that the exchange morphism above becomes \[f_\sharp\circ \rho_\ell\circ \delta_Y\to \rho_\ell\circ\delta_X\circ f_\sharp.\]
	Now remark that for $S$ a variety, the composition $\mathrm{Nor}^*_S\circ\delta_S$ is the identity functor and the functor $\mathrm{Nor}^*$ commutes with the $\ell$-adic realisation thus we have an isomorphism \[\rho_\ell\circ\delta_S\simeq R_\ell\circ\mathrm{Nor}^*\circ\delta_S\simeq R_\ell.\]
	As the functor $R_\ell$ commutes with $f_\sharp$, the proof is finished.
\end{proof}

\begin{proof}(of \Cref{IfConjThenSame})
	Because both $\mathcal{DM}^\mathrm{\'et}_c$ and $\mathcal{D}^b(\Mp(-))$ are Zariski hypersheaves and $\mathrm{Nor}^*$ is a morphism of sheaves, we may assume that $X$ is quasi-projective.

	Let $\delta_X$ be the functor of \Cref{lem2stdconj}. We know that $\mathrm{Nor}^*_X\circ\delta_X\simeq\mathrm{Id}_{\dcal^b(\Mp(X))}.$ We can now show that $\delta_X\circ\mathrm{Nor}^*_X\simeq\mathrm{Id}_{\mathcal{DM}^{\mathrm{\'et}}_c(X)}$. By \Cref{lem2stdconj} and \Cref{realNori} the functor $\delta_X\circ\mathrm{Nor}^*_X$ is a morphism of coefficient systems. In particular, its composition with the canonical symmetric monoidal functor 
	\[M_X\colon \mathrm{Sm}_X\to\mathcal{DM}^{\mathrm{\'et}}_c(X)\] is isomorphic to the canonical symmetric monoidal functor $M_X$. Using the universal property of $\mathcal{DM}^{\mathrm{\'et}}_c(X)$ proved by Robalo in \cite[Corollary 2.29]{MR3281141} (or rather, its étale and $\Q$-linear counterpart which can be deduced from it) this implies:
$$		\delta_X\circ\mathrm{Nor}^*_X\simeq\mathrm{Id}_{\mathcal{DM}^{\mathrm{\'et}}_c(X)}.
$$	This finishes the proof as now we have that $\mathrm{Nor}^*_X$ and $\delta_X$ are inverses of each other.
	
\end{proof}

\appendix

\section{Coefficient systems.}
\label{CoeffSysAppe}
Recall the following definition of Drew and Gallauer in \cite{MR4560376} :
\begin{defi}
	\label{defCoSys}
A functor $C:\Sch^\op_k\to\mathrm{CAlg}(\catinfty^\mathrm{st})$ taking values in symmetric monoidal stable $\infty$-categories and exact symmetric monoidal functors is called a \emph{coefficient system} if it satisfies the following properties.

\begin{enumerate}
\item \textbf{(Pushforwards)} For every $f:Y\to X$ in $\Sch_k$, the
pullback functor $f^*$ admits a right adjoint $f_*:C(Y)\to C(X)$.
\item \textbf{(Internal homs)} For every $X\in\Sch_k$, the symmetric monoidal structure on $C(X)$ is closed.

\item For each smooth morphism $p:Y\to X\in\Sch_k$, the functor $p^*:C(X)\to C(Y)$ admits a left adjoint $p_\sharp$, and:
\begin{enumerate}
\item \textbf{(Smooth base change)} For each cartesian square
\[
\begin{tikzcd}
Y'
\ar[r, "p'" above]
\ar[d, "f'" left]
&
X'
\ar[d, "f" right]
\\
Y
\ar[r, "p" above]
&
X
\end{tikzcd}
\]
in $\Sch_k$, the exchange transformation $p'_\sharp (f')^*\to f^*p_\sharp$ is an equivalence.
\item \textbf{(Smooth projection formula)} The exchange transformation
\[
p_\sharp(p^*(-)\otimes -)\to -\otimes p_\sharp(-)
\]
is an equivalence of functors~$C(X)\times C(Y)\to C(Y)$.
\end{enumerate}
\item \textbf{(Localisation)} $C(\emptyset)\simeq 0$ and for each closed immersion $Z\to X$ in $\Sch_k$ with complementary open immersion $j:U\to X$, the square
\[
\begin{tikzcd}
C(Z)
\ar[r, "{i_*}"]
\ar[d]
&
C(X)
\ar[d, "j^*"]
\\
0
\ar[r]
&
C(U)
\end{tikzcd}
\]
is cartesian in $\catinfty^\mathrm{st}$.
\item  \textbf{($\A^1$-homotopy invariance)} For each $X\in\Sch_k$, if $\pi_{\A^1}:\A^1_X\to X$ denotes the canonical projection then
 the functor $\pi_{\A^1}^*:C(X)\to C(\A^1_X)$ is fully faithful.
\item \textbf{($\mathrm{T}$-stability)} The composite $\pi_{\A^1,\sharp} s_*:C(X)\to C(X)$ is an equivalence.

\end{enumerate}
A morphism of coefficient systems is a natural transformation $\phi:C\to C'$ such that for each smooth morphism $p:Y\to X$ in $\Sch_k$, the exchange transformation
\[
p_\sharp\phi_Y\to \phi_X p_\sharp
\]
is an equivalence.
This defines a sub $\infty$-category $\mathrm{CoSy}_k\subset\mathrm{Fun}(\Sch^\op_k,\mathrm{CAlg}(\catinfty^\mathrm{st}))$. We say that a coefficient system $C$ is \emph{cocomplete} if it takes values in symmetric monoidal cocomplete $\infty$-categories and colimit preserving functors. We denote by $\mathrm{CoSy}_k^c$ their $\infty$-category.

\end{defi}

Recall the main theorem of \cite{MR4560376}:
\begin{thm}[Drew-Gallauer]
	\label{univSh}
The stable $\A^1$-homotopy $\infty$-category defines an object $\mathcal{SH}\in \mathrm{CoSy}_k^c$. It is the initial object. 
\end{thm}

\begin{defi}
We let $\mathrm{QPCoSy}_k$ the $\infty$-category defined in the same terms as \Cref{defCoSys} but with functors $$\mathrm{Var}_k^\op \to \mathrm{CAlg}(\catinfty^\mathrm{st})$$ with source quasi-projective varieties instead of finite type $k$-schemes.
\end{defi}

\begin{prop}
	\label{extQProj}
	The restriction functor $$\mathrm{CoSy}_k\to \mathrm{QPCoSy}_k$$ is an equivalence of $\infty$-categories with quasi-inverse the right Kan extension. 
\end{prop}
\begin{proof}
	By \cite[Corollary 2.19]{GallauerIntroSixFF} or \cite[Proposition 7.13]{MR4560376}, any coefficient system (on all finite type $k$-schemes or varieties) has Nisnevich descent, thus Zariski descent. 
	Restriction and right Kan extension give an equivalence of categories between Zariski sheaves 
	$$\mathrm{Shv}_\mathrm{Zar}(\mathrm{Var}_k,\mathrm{CAlg}(\catinfty^\mathrm{st})) \simeq \mathrm{Shv}_\mathrm{Zar}(\Sch_k,\mathrm{CAlg}(\catinfty^\mathrm{st}))$$ because any finite type $k$-scheme admits a Zariski covering by affine schemes. Therefore it suffices to check that if $C\in \mathrm{QPCoSy}_k$, its right Kan extension to $\Sch^\op$ is an object of $\mathrm{CoSy}_k$, and that a morphism in $\mathrm{QPCoSy}_k$ gives a morphism in $\mathrm{CoSy}_k$ after Kan extension.

	We denote by $\dcal$ the right Kan extension of the functor 
	$$C : \mathrm{Var}_k^\op \to \mathrm{CAlg}(\catinfty)$$
	 along the inclusion $\mathrm{Var}_k^\op \to \Sch_k^\op$. Because $C$ is a Zariski sheaf, for $X\in\Sch_k$ we may choose a Zariski covering $p:U\to X$ with $U$ affine and as $p$ is quasi-projective, all $U_n := U^{\times_X {n+1}}$ are quasi-projective over $k$ and we have 
	 $$\dcal(X)\xrightarrow{\sim}\lim_\Delta C(U_n)$$
	 in $\mathrm{CAlg}(\catinfty)$. But then if $f: Y\to X$ is a smooth morphism, base change induces a map of simplicial schemes $f_\bullet:V_\bullet\to U_\bullet$ with $V_n = U_n\times_X Y$ and we also have $$\dcal(Y)\xrightarrow{\sim}\lim_\Delta C(V_n)$$
	 in $\mathrm{CAlg}(\catinfty)$. In fact, the functor 
	 $$f^*\colon \dcal(X)\to\dcal(Y)$$ is the limit of the functors 
	 $$f_n^*\colon C(U_n)\to C(V_n)$$
	 in the $\infty$-category $\mathrm{Fun}(\Delta^1,\catinfty)$. Because each $f_n^*$ has a left adjoint $(f_n)_\sharp$, and for every map $[n]\to [m]$ in $\Delta$ the square 
	 \begin{equation}\label{limfsharp}
		\begin{tikzcd}
			C(V_n)\ar[r,"(f_n)_\sharp"] \ar[d] & C(U_n)\ar[d]\\
			C(V_m)\ar[r,"(f_m)_\sharp"] & C(U_m)
		\end{tikzcd}
	\end{equation} commutes thanks to the canonical Beck-Chevalley morphism and the smooth base change property, the diagram $(f_n^*)_{[n]\in \Delta}$ in fact takes values in $\mathrm{Fun}^{\mathrm{LAd}}(\Delta^1,\catinfty)$ the $\infty$-category of left adjointable functors (see \cite[Definition 4.7.4.16]{lurieHigherAlgebra2022}). Thus by \cite[Corollary 4.7.4.18]{lurieHigherAlgebra2022} the limit $f^*$ is also a left adjointable functor: the left adjoint $f_\sharp$ exists and has a base change property with respect to all the functors $\dcal(X)\to C(U_n)$. Moreover, because $f^*$ is symmetric monoidal for $M\in \dcal(X)$ and $N\in\dcal(Y)$ there exists a canonical map $$f_\sharp(N\otimes f^*M )\to f_\sharp N\otimes M.$$ This map is an equivalence. Indeed if $M_n = M_{\mid U_n}$, we have a map of diagrams $(f_n^*)_n\to (f_n^*)_n$ given by the commutative squares
	$$
		\begin{tikzcd}
			\dcal(U_n) \arrow[r,"f_n^*"] \arrow[d,"-\otimes M_n",swap] 
			  & \dcal(V_n) \arrow[d,"-\otimes f_n^*M_n"] \\
			\dcal(U_n) \arrow[r,"f_n^*"]
			  & \dcal(V_n)
		\end{tikzcd}
	$$
	which, because of the projection formula for each $f_n$ and the base change for $(f_m)_\sharp$, is a map of diagrams with values in $\mathrm{Fun}^{\mathrm{LAd}}(\Delta^1,\catinfty)$. Thus at the limit the map $f^*\to f^*$ in $\mathrm{Fun}(\Delta^1,\catinfty)$ given by 
	$$f^*(-)\otimes M \Rightarrow f^*(-\otimes M)$$ is left adjointable, giving the projection formula.

	 Moreover, if $g\colon X'\to X$ is a map in $\Sch_k$ we can form the pullback
	 \[ 
		\begin{tikzcd}
			Y' \arrow[r,"f'"] \arrow[d,"g'",swap]
				\arrow[dr, phantom, very near start, "{ \lrcorner }"]
			  & X' \arrow[d,"g"] \\
			Y \arrow[r,"f"]
			  & X
		\end{tikzcd}
	 \]
	 and doing a base change to $U_\bullet$ we obtain a pullback square of simplicial objects in $\mathrm{Var}_k$ of the form
	 \[ 
		\begin{tikzcd}
			V'_\bullet \arrow[r,"b'"] \arrow[d,"a'",swap]
				\arrow[dr, phantom, very near start, "{ \lrcorner }"]
			  & U'_\bullet \arrow[d,"a"] \\
			V_\bullet \arrow[r,"b"]
			  & U_\bullet
		\end{tikzcd}.
	 \] Then as above, the commutative square 
	 $$
		\begin{tikzcd}
			\dcal(X) \arrow[r,"f^*"] \arrow[d,"g^*",swap] 
			  & \dcal(Y) \arrow[d,"(g')^*"] \\
			\dcal(X') \arrow[r,"(f')^*"]
			  & \dcal(Y')
		\end{tikzcd}
	 $$
	 can be seen as a map $f^*\to (f')^*$ in $\mathrm{Fun}(\Delta^1,\catinfty)$, which itself is a limit of maps $b'_n\to b_n$ in $\mathrm{Fun}(\Delta^1,\catinfty)$. As this last simplicial diagram of maps takes values in $\mathrm{Fun}^\mathrm{LAd}(\Delta^1,\catinfty)$ because of the smooth base change holds for maps in $\mathrm{Var}_k$, the limit is left adjointable, giving the smooth base for the pair $(f,g)$.

	 The existence of pushforwards and internal $\sHom$ is proven in a similar 
	 way to the existence of the $f_\sharp$ (using right adjointable squares instead of left adjointable). The only thing to check in order 
	 to obtain those functors as functors to a limit of $\infty$-categories is 
	 that over quasi-projective schemes these functors commute with pullbacks 
	 by Zariski covering so that they define a compatible system as in \Cref{limfsharp}. This is the case because these pullbacks $\pi^*$ are right 
	 adjoints to $\pi_\sharp$ and the smooth base change and the projection 
	 formula ensures the commutativity of $\pi_\sharp$ with the left adjoints 
	 of internal $\sHom$ and pushforwards. Localisation follows from the fact 
	 that the category $\mathrm{Sq}^\mathrm{cart}(\catinfty)$ of cartesian 
	 squares in $\catinfty$ is closed under limits in the category of 
	 commutative squares of $\infty$-categories. The properties of 
	 $\A^1$-invariance and $\mathrm{T}$-stability can be checked Zariski 
	 locally thanks to the smooth projection formula and the smooth base 
	 change, hence they hold. The same is true for extensions of morphisms of coefficient 
	 systems.
\end{proof}

\begin{rem}
The above proposition on extension of coefficient system from quasi-projective to finite type objects works \emph{verbatim} if one replaces $\Spec k$ with any noetherian finite dimensional scheme $B$. Moreover the proposition also holds with quasi-projective varieties replaced by separated reduced finite type $k$-schemes.
\end{rem}

Finally recall the following theorem:
\begin{thm}[Ayoub-Cisinski-Déglise-Röndings-Voevodsky]
	Any coefficient system has the six functors.	
\end{thm}

 \bibliographystyle{alpha}
 \bibliography{BibFinal}

\begin{thebibliography}{CGAdS17}

\bibitem[AGV22]{MR4466640}
Joseph Ayoub, Martin Gallauer, and Alberto Vezzani.
\newblock The six-functor formalism for rigid analytic motives.
\newblock {\em Forum Math. Sigma}, 10:Paper No. e61, 182, 2022.

\bibitem[And04]{MR2115000}
Yves Andr\'{e}.
\newblock {\em Une introduction aux motifs (motifs purs, motifs mixtes, p\'{e}riodes)}, volume~17 of {\em Panoramas et Synth\`eses [Panoramas and Syntheses]}.
\newblock Soci\'{e}t\'{e} Math\'{e}matique de France, Paris, 2004.

\bibitem[Aok23]{MR4549105}
Ko~Aoki.
\newblock Tensor triangular geometry of filtered objects and sheaves.
\newblock {\em Math. Z.}, 303(3):Paper No. 62, 27, 2023.

\bibitem[Ara13]{MR2995668}
Donu Arapura.
\newblock An abelian category of motivic sheaves.
\newblock {\em Adv. Math.}, 233:135--195, 2013.

\bibitem[Ara23]{MR4568787}
Donu Arapura.
\newblock Motivic sheaves revisited.
\newblock {\em J. Pure Appl. Algebra}, 227(8):Paper No. 107125, 22, 2023.

\bibitem[Ayo07]{MR2423375}
Joseph Ayoub.
\newblock Les six op\'{e}rations de {G}rothendieck et le formalisme des cycles \'{e}vanescents dans le monde motivique. {I}.
\newblock {\em Ast\'{e}risque}, (314):x+466, 2007.

\bibitem[Ayo10]{MR2602027}
Joseph Ayoub.
\newblock Note sur les op\'{e}rations de {G}rothendieck et la r\'{e}alisation de {B}etti.
\newblock {\em J. Inst. Math. Jussieu}, 9(2):225--263, 2010.

\bibitem[Ayo14a]{MR3205601}
Joseph Ayoub.
\newblock La r\'{e}alisation \'{e}tale et les op\'{e}rations de {G}rothendieck.
\newblock {\em Ann. Sci. \'{E}c. Norm. Sup\'{e}r. (4)}, 47(1):1--145, 2014.

\bibitem[Ayo14b]{MR3259031}
Joseph Ayoub.
\newblock L'alg\`ebre de {H}opf et le groupe de {G}alois motiviques d'un corps de caract\'{e}ristique nulle, {I}.
\newblock {\em J. Reine Angew. Math.}, 693:1--149, 2014.

\bibitem[Ayo22]{ayoubAnabelianPresentationMotivic2022}
Joseph Ayoub.
\newblock Anabelian presentation of the motivic {{Galois}} group in characteristic zero.
\newblock Preprint, available at \url{https://user.math.uzh.ch/ayoub/PDF-Files/Anabel.pdf}, 2022.

\bibitem[Ayo24]{AyoubWeil}
Joseph Ayoub.
\newblock Weil cohomology theories and their motivic hopf algebroids.
\newblock Preprint, available at \url{https://user.math.uzh.ch/ayoub/PDF-Files/motivic-connectivity.pdf}, 2024.

\bibitem[Bar21]{MR4325954}
Owen Barrett.
\newblock The derived category of the abelian category of constructible sheaves.
\newblock {\em Manuscripta Math.}, 166(3-4):419--425, 2021.

\bibitem[BBD82]{MR0751966}
A.~A. Beilinson, J.~Bernstein, and P.~Deligne.
\newblock Faisceaux pervers.
\newblock In {\em Analysis and topology on singular spaces, {I} ({L}uminy, 1981)}, volume 100 of {\em Ast\'{e}risque}, pages 5--171. Soc. Math. France, Paris, 1982.

\bibitem[BCKW24]{bunkeControlledObjectsLeftexact2019}
Ulrich Bunke, Denis-Charles Cisinski, Daniel Kasprowski, and Christoph Winges.
\newblock Controlled objects in left-exact \$\textbackslash infty\$-categories and the {{Novikov}} conjecture.
\newblock Preprint, available at \url{https://cisinski.app.uni-regensburg.de/unik.pdf}, 2024.

\bibitem[Bei87]{MR0923133}
A.~A. Beilinson.
\newblock On the derived category of perverse sheaves.
\newblock In {\em {$K$}-theory, arithmetic and geometry ({M}oscow, 1984--1986)}, volume 1289 of {\em Lecture Notes in Math.}, pages 27--41. Springer, Berlin, 1987.

\bibitem[Bei12]{MR2953406}
A.~Beilinson.
\newblock Remarks on {G}rothendieck's standard conjectures.
\newblock In {\em Regulators}, volume 571 of {\em Contemp. Math.}, pages 25--32. Amer. Math. Soc., Providence, RI, 2012.

\bibitem[BM21]{MR4278670}
Bhargav Bhatt and Akhil Mathew.
\newblock The arc-topology.
\newblock {\em Duke Math. J.}, 170(9):1899--1988, 2021.

\bibitem[Bon11]{bondarkoWeightsTstructuresGeneral2011}
Mikhail~V. Bondarko.
\newblock Weights and t-structures: In general triangulated categories, for 1-motives, mixed motives, and for mixed {{Hodge}} complexes and modules.
\newblock Preprint, available at \url{https://arxiv.org/abs/1011.3507}, 2011.

\bibitem[Bon15]{MR3347995}
Mikhail~V. Bondarko.
\newblock Mixed motivic sheaves (and weights for them) exist if `ordinary' mixed motives do.
\newblock {\em Compos. Math.}, 151(5):917--956, 2015.

\bibitem[CD16]{MR3477640}
Denis-Charles Cisinski and Fr\'{e}d\'{e}ric D\'{e}glise.
\newblock \'{E}tale motives.
\newblock {\em Compos. Math.}, 152(3):556--666, 2016.

\bibitem[CD19]{MR3971240}
Denis-Charles Cisinski and Fr\'{e}d\'{e}ric D\'{e}glise.
\newblock {\em Triangulated categories of mixed motives}.
\newblock Springer Monographs in Mathematics. Springer, Cham, [2019] \copyright 2019.

\bibitem[CGAdS17]{MR3649230}
Utsav Choudhury and Martin Gallauer Alves~de Souza.
\newblock An isomorphism of motivic {G}alois groups.
\newblock {\em Adv. Math.}, 313:470--536, 2017.

\bibitem[Cis19]{MR3931682}
Denis-Charles Cisinski.
\newblock {\em Higher categories and homotopical algebra}, volume 180 of {\em Cambridge Studies in Advanced Mathematics}.
\newblock Cambridge University Press, Cambridge, 2019.

\bibitem[Del80]{MR0601520}
Pierre Deligne.
\newblock La conjecture de {W}eil. {II}.
\newblock {\em Inst. Hautes \'{E}tudes Sci. Publ. Math.}, (52):137--252, 1980.

\bibitem[DG22]{MR4560376}
Brad Drew and Martin Gallauer.
\newblock The universal six-functor formalism.
\newblock {\em Ann. K-Theory}, 7(4):599--649, 2022.

\bibitem[Dre18]{drewMotivicHodgeModules2018}
Brad Drew.
\newblock Motivic {Hodge} modules, 2018.
\newblock Preprint, available at \url{https://bdrew.gitlab.io/pdf/DH.pdf}.

\bibitem[EHIK21]{MR4319065}
Elden Elmanto, Marc Hoyois, Ryomei Iwasa, and Shane Kelly.
\newblock Milnor excision for motivic spectra.
\newblock {\em J. Reine Angew. Math.}, 779:223--235, 2021.

\bibitem[Fak00]{fakhruddinNotesNoriLectures2000}
Najmuddin Fakhruddin.
\newblock Notes of {{Nori}}'s lectures on {{Mixed Motives}}.
\newblock {TIFR, Mumbai,Preprint}, 2000.

\bibitem[Gal22]{GallauerIntroSixFF}
Martin Gallauer.
\newblock An introduction to six-functor formalisms, 2022.
\newblock Available at \url{https://arxiv.org/abs/2112.10456}.

\bibitem[Har16]{harrerComparisonCategoriesMotives2016}
Daniel Harrer.
\newblock {\em Comparison of the {{Categories}} of {{Motives}} Defined by {{Voevodsky}} and {{Nori}}}.
\newblock PhD thesis, The University of Freiburg, 2016.
\newblock available at \url{https://d-nb.info/1122743106/34}.

\bibitem[Hin16]{MR3460765}
Vladimir Hinich.
\newblock Dwyer-{K}an localization revisited.
\newblock {\em Homology Homotopy Appl.}, 18(1):27--48, 2016.

\bibitem[HMS17]{MR3618276}
Annette Huber and Stefan M\"{u}ller-Stach.
\newblock {\em Periods and {N}ori motives}, volume~65 of {\em Ergebnisse der Mathematik und ihrer Grenzgebiete. 3. Folge. A Series of Modern Surveys in Mathematics [Results in Mathematics and Related Areas. 3rd Series. A Series of Modern Surveys in Mathematics]}.
\newblock Springer, Cham, 2017.
\newblock With contributions by Benjamin Friedrich and Jonas von Wangenheim.

\bibitem[HRS23]{MR4609461}
Tamir Hemo, Timo Richarz, and Jakob Scholbach.
\newblock Constructible sheaves on schemes.
\newblock {\em Adv. Math.}, 429:Paper No. 109179, 46, 2023.

\bibitem[Hub00]{MR1775312}
Annette Huber.
\newblock Realization of {V}oevodsky's motives.
\newblock {\em J. Algebraic Geom.}, 9(4):755--799, 2000.

\bibitem[Hé11]{MR2834728}
David Hébert.
\newblock Structure de poids \`a la {B}ondarko sur les motifs de {B}eilinson.
\newblock {\em Compos. Math.}, 147(5):1447--1462, 2011.

\bibitem[IM22]{ivorraFourOperationsPerverse2022}
Florian Ivorra and Sophie Morel.
\newblock The four operations on perverse motives.
\newblock Preprint, available at \url{http://perso.ens-lyon.fr/sophie.morel/PerverseMotives.pdf}, 2022.

\bibitem[Ivo16]{MR3518311}
Florian Ivorra.
\newblock Perverse, {H}odge and motivic realizations of \'{e}tale motives.
\newblock {\em Compos. Math.}, 152(6):1237--1285, 2016.

\bibitem[Ivo17]{MR3723805}
Florian Ivorra.
\newblock Perverse {N}ori motives.
\newblock {\em Math. Res. Lett.}, 24(4):1097--1131, 2017.

\bibitem[Iwa18]{MR3789830}
Isamu Iwanari.
\newblock Tannaka duality and stable infinity-categories.
\newblock {\em J. Topol.}, 11(2):469--526, 2018.

\bibitem[Lev05]{MR2181828}
Marc Levine.
\newblock Mixed motives.
\newblock In {\em Handbook of {$K$}-theory. {V}ol. 1, 2}, pages 429--521. Springer, Berlin, 2005.

\bibitem[Lura]{lurieKerodon}
Jacob Lurie.
\newblock Kerodon.
\newblock \url{https://kerodon.net/}.

\bibitem[Lurb]{lurieSpectralAlgebraicGeometry}
Jacob Lurie.
\newblock Spectral {{Algebraic Geometry}}.
\newblock available at \url{https://www.math.ias.edu/~lurie/papers/SAG-rootfile.pdf}.

\bibitem[Lur11]{lurieDerivedAlgebraicGeometry2011}
Jacob Lurie.
\newblock Derived {{Algebraic Geometry VII}}: {{Spectral Schemes}}, 2011.
\newblock available at \url{https://people.math.harvard.edu/~lurie/papers/DAG-VII.pdf}.

\bibitem[Lur22]{lurieHigherAlgebra2022}
Jacob Lurie.
\newblock Higher {{Algebra}}, 2022.
\newblock available at \url{https://people.math.harvard.edu/~lurie/papers/HA.pdf}.

\bibitem[LZ15]{liuGluingRestrictedNerves2015}
Yifeng Liu and Weizhe Zheng.
\newblock Gluing restricted nerves of \$\textbackslash infty\$-categories.
\newblock Preprint, available at \url{https://arxiv.org/abs/1211.5294}, 2015.

\bibitem[ML98]{MR1712872}
Saunders Mac~Lane.
\newblock {\em Categories for the working mathematician}, volume~5 of {\em Graduate Texts in Mathematics}.
\newblock Springer-Verlag, New York, second edition, 1998.

\bibitem[Mor19]{morelMixedAdicComplexes}
Sophie Morel.
\newblock Mixed -adic complexes for schemes over number fields.
\newblock Preprint, available at \url{http://perso.ens-lyon.fr/sophie.morel/sur_Q.pdf}, 2019.

\bibitem[MV99]{MR1813224}
Fabien Morel and Vladimir Voevodsky.
\newblock {${\bf A}^1$}-homotopy theory of schemes.
\newblock {\em Inst. Hautes \'{E}tudes Sci. Publ. Math.}, (90):45--143, 1999.

\bibitem[MW22]{martini_presentable_2022}
Louis Martini and Sebastian Wolf.
\newblock Presentable categories internal to an \${\textbackslash}infty\$-topos, 2022.

\bibitem[Nor02]{MR1940678}
Madhav~V. Nori.
\newblock Constructible sheaves.
\newblock In {\em Algebra, arithmetic and geometry, {P}art {I}, {II} ({M}umbai, 2000)}, volume~16 of {\em Tata Inst. Fund. Res. Stud. Math.}, pages 471--491. Tata Inst. Fund. Res., Bombay, 2002.

\bibitem[NRS20]{MR4093970}
Hoang~Kim Nguyen, George Raptis, and Christoph Schrade.
\newblock Adjoint functor theorems for {$\infty$}-categories.
\newblock {\em J. Lond. Math. Soc. (2)}, 101(2):659--681, 2020.

\bibitem[Rob14]{robaloThese}
Marco Robalo.
\newblock {\em Théorie homotopique motivique des espaces non-commutatifs.}
\newblock PhD thesis, University of Montpellier, 2014.

\bibitem[Rob15]{MR3281141}
Marco Robalo.
\newblock {$K$}-theory and the bridge from motives to noncommutative motives.
\newblock {\em Adv. Math.}, 269:399--550, 2015.

\bibitem[RS20]{MR4061978}
Timo Richarz and Jakob Scholbach.
\newblock The intersection motive of the moduli stack of shtukas.
\newblock {\em Forum Math. Sigma}, 8:Paper No. e8, 99, 2020.

\bibitem[RT24]{integralNori}
Raphaël Ruimy and Swann Tubach.
\newblock Nori motives (and mixed hodge modules) with integral coefficients.
\newblock Preprint, available at \url{https://swann.tubach.fr/en/research/}, 2024.

\bibitem[Sai90]{MR1047415}
Morihiko Saito.
\newblock Mixed {H}odge modules.
\newblock {\em Publ. Res. Inst. Math. Sci.}, 26(2):221--333, 1990.

\bibitem[Sai06]{saitoFormalismeMixedSheaves2006}
Morihiko Saito.
\newblock On the formalisme of mixed sheaves.
\newblock Preprint, available at \url{https://arxiv.org/abs/math/0611597}, 2006.

\bibitem[Ter24]{terenziTensorStructurePerverse2024}
Luca Terenzi.
\newblock Tensor structure on perverse {Nori} motives, 2024.
\newblock Available at \url{http://arxiv.org/abs/2401.13547}.

\bibitem[Tub24]{SwannMHM}
Swann Tubach.
\newblock Mixed hodge modules on stacks.
\newblock Preprint, available at \url{https://swann.tubach.fr/en/research/}, 2024.

\bibitem[Voe02]{MR1883180}
Vladimir Voevodsky.
\newblock Motivic cohomology groups are isomorphic to higher {C}how groups in any characteristic.
\newblock {\em Int. Math. Res. Not.}, (7):351--355, 2002.

\bibitem[VSF00]{MR1764197}
Vladimir Voevodsky, Andrei Suslin, and Eric~M. Friedlander.
\newblock {\em Cycles, transfers, and motivic homology theories}, volume 143 of {\em Annals of Mathematics Studies}.
\newblock Princeton University Press, Princeton, NJ, 2000.

\end{thebibliography}

Swann Tubach \url{swann.tubach@ens-lyon.fr}

E.N.S Lyon, UMPA, 46 Allée d'Italie, 

69364 Lyon Cedex 07, France
\end{document}